\documentclass[a4paper,12pt]{amsart}

\usepackage{amsfonts}
\usepackage{amsmath}
\usepackage{amsthm}
\usepackage{amssymb}

\setlength{\parskip}{1ex plus 0.5ex minus 0.2ex}

\theoremstyle{definition} \newtheorem{defn}{Definition}[section]
\theoremstyle{plain} \newtheorem{thm}[defn]{Theorem}
\theoremstyle{plain} \newtheorem{propn}[defn]{Proposition}
\theoremstyle{plain} \newtheorem{lemma}[defn]{Lemma}
\theoremstyle{plain} \newtheorem{cor}[defn]{Corollary}
\theoremstyle{plain} 

\theoremstyle{definition}  
\theoremstyle{definition}  
\theoremstyle{remark}  
\theoremstyle{remark}  
\theoremstyle{remark}  

\theoremstyle{plain} \newtheorem*{thm*}{Theorem}
\theoremstyle{plain} \newtheorem*{cor*}{Corollary}

\newcommand {\Q} {\mathbb{Q}}

\newcommand {\Z} {\mathbb{Z}}

\newcommand {\CC} {\mathbb{C}}






\newcommand{\C}{\mathbb{C}}
\renewcommand {\epsilon}{\varepsilon}










\newcommand{\cc}{\textbf{c}}
\author{Gareth Boxall} 
\thanks{This work is based on the research supported in part by the National Research Foundation of South Africa (Grant Number 96234).}
\address{gboxall@sun.ac.za}
\address{Mathematics Division, Department of Mathematical Sciences, 
Stellenbosch University, Matieland 7602, South Africa}
\author{Gareth Jones}
\address{gareth.jones-3@manchester.ac.uk}
\address{School of Mathematics, University of Manchester, Oxford Road, Manchester, M13 9PL, UK.} 
\author{Harry Schmidt}
\address{harry.Schmidt@manchester.ac.uk}
\address{School of Mathematics, University of Manchester, Oxford Road, Manchester, M13 9PL, UK.}
\thanks{All three authors thank the Engineering and Physical Sciences Research Council for support under grant  EP/N007956/1.}
\title[Rational values and arithmetic dynamics]{Rational values of transcendental functions and arithmetic dynamics}

\date{\today}
\begin{document}
\maketitle
 
\begin{abstract}
We count algebraic points of bounded height and degree on the graphs of certain functions analytic on the unit disk, obtaining a bound which is polynomial in the degree and in the logarithm of the multiplicative height. We combine this work with p-adic methods to obtain, for each positive $\varepsilon$, an upper bound of the form $cD^{3n/4 + \varepsilon n}$ on the number of irreducible factors of $P^{\circ n}(X)-P^{\circ n}(\alpha)$ over $K$, where $K$ is a number field, $P$ is a polynomial of degree $D\geq 2$ over $K$, $P^{\circ n}$ is the $n$-th iterate of $P$, $\alpha$ is a point in $K$ for which  $\{P^{\circ n}(\alpha):n\in\mathbb{N}\}$ is infinite and $c$ depends effectively on $P, \alpha, [K:\mathbb{Q}]$ and $\varepsilon$.
\end{abstract}

\section{Introduction} We will give improvements, in certain cases, of the Bombieri-Pila Theorem on rational values of transcendental analytic functions. Our improvements apply to certain classes of analytic functions on the disk, by which we shall always mean the open unit disk in the complex plane. For $z$ in the disk, we let $\phi(z)=1/(1-|z|)$. We consider classes of functions defined by growth conditions involving $\phi$. For our first result we take functions decaying exponentially with respect to $\phi$. More precisely, we suppose that $f$ is analytic on the unit disk and that there is a subset $S$ of the unit disk containing $(0,1)$ such that, for some positive real numbers $a,b$ with $b>1$, we have 
\[
|f(z)| \le \frac{a}{b^{\phi(z)}}
\]
for $z\in S$.
\begin{thm}\label{decay} Suppose that $f$ is analytic on the unit disk with $|f(z)|\le 1$ there and that there are positive real numbers $a,b$ with $b>1$ such that $|f(z)|< ab^{-\phi(x)}$ for $z$ in $S$. There is a positive real  constant $c_{a,b}$, depending only on $a,b$ and effectively computable from them, with the following property. For $d\ge 2$ and $H\ge e$ the number of
algebraic $\alpha \in S$ such that $f(\alpha)$ is algebraic and non-zero with $[\Q(\alpha,f(\alpha)):\Q]\leq d$ and $H(\alpha),H(f(\alpha))\leq H$ is at most
\begin{align*}
c_{a,b}d^9(\log d)^2(\log H)^9.
\end{align*}
Moreover if, for some $0<r<1$, we only consider $\alpha$ such that $|\alpha|\le r$ then the bound can be improved to $c_{a,b,r}d^4(\log d)^2(\log H)^4$, where $c_{a,b,r}$ is effectively computable from $a, b$ and $r$. 
\end{thm}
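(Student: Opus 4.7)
The plan is to use a determinant method in the style of Bombieri--Pila, exploiting the exponential decay of $f$ on $S$ to replace the classical $H^{\varepsilon}$ loss by a bound polynomial in $\log H$. I would proceed in the following steps.

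First, I would apply a Liouville reduction: for an algebraic number $\beta\ne 0$ with $[\Q(\beta):\Q]\le d$ and $H(\beta)\le H$ one has $|\beta|\ge c_0 H^{-d}$ with $c_0$ effective. Since $f(\alpha)$ is assumed non-zero, combining this with $|f(\alpha)|\le ab^{-\phi(\alpha)}$ on $S$ forces $\phi(\alpha)\le C d\log H$, i.e.\ $|\alpha|\le 1-\kappa/(d\log H)$ for some effective $\kappa>0$. So I would restrict attention to $S':=S\cap\{|z|\le 1-\kappa/(d\log H)\}$. In the ``moreover'' part, the assumption $|\alpha|\le r$ already confines $\alpha$ to a compact subset of the disk, so this Liouville reduction is not needed, which is ultimately the reason for the improved exponents $4$ in place of $9$.

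Second, I would cover $S'$ by small disks of a radius $\rho$ to be optimized, requiring $O(\rho^{-2}(d\log H)^2)$ of them. On each such disk $\Delta$, run a determinant argument. Fixing bidegrees $D_1,D_2$ and setting $N=(D_1+1)(D_2+1)$, if $\Delta$ contains $N$ or more of our algebraic points $\alpha_1,\dots,\alpha_N$, form the matrix $M=(\alpha_i^{k_1}f(\alpha_i)^{k_2})$. Either $\det M\ne 0$, or a nonzero $P(X,Y)\in\comp[X,Y]$ of bidegree $\le (D_1,D_2)$ vanishes at all $(\alpha_i,f(\alpha_i))$. In the former case, Taylor-expanding each row about the centre $z_0$ of $\Delta$ and factoring out a Vandermonde-type minor yields $|\det M|\le C\rho^{\Omega(N^2)}$, with $C$ controlled by Cauchy estimates on a slightly larger concentric disk together with the decay of $f$. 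On the other hand $\det M$ is algebraic of degree at most $d^N$ with coefficients of controlled height, so if non-zero $|\det M|\ge H^{-O(dN(D_1+D_2))}$. For an appropriate choice of $\rho,D_1,D_2$ these two inequalities are incompatible, forcing all points of $\Delta$ onto the common curve $P=0$.

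Third, since $f$ is transcendental, $g(w):=P(w,f(w))$ is a non-zero analytic function on the unit disk. Jensen's formula applied on a disk slightly larger than $\Delta$ then bounds the number of zeros of $g$ inside $\Delta$ in terms of $D_1,D_2,\log a$, $\log b$ and the radii involved. Summing over the $O(\rho^{-2}(d\log H)^2)$ disks produces the claimed count.

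The main obstacle is the simultaneous optimization of the three parameters $\rho,D_1,D_2$ as functions of $d$ and $\log H$, so that the determinantal inequalities strictly contradict the Liouville lower bound while the final product (number of disks)$\times$(zeros of $g$ per disk) matches the claimed $d^9(\log d)^2(\log H)^9$. The $(\log d)^2$ factor would arise from the usual logarithmic loss in the height of the determinant, while the equality of the $d$- and $\log H$-exponents reflects the symmetric way these two quantities enter through the size of $S'$ in the Liouville reduction. In the $|\alpha|\le r$ case that reduction is absent, which is what produces the improved exponent $4$.
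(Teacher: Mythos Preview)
Your overall architecture (Liouville reduction $\Rightarrow$ covering and determinant method $\Rightarrow$ Jensen zero-count) matches the paper's, and your first step is exactly Lemma~\ref{diskdecaynottoolarge}. Your second step is also in line with the paper: the covering by disks of radius comparable to $1/(d\log H)$ and the determinant method is precisely how Lemma~\ref{diskpolynomial} is proved, via Masser's Proposition~\ref{propn2}. The paper, however, multiplies the local polynomials into a \emph{single} global $P$ of degree $T\asymp l_{a,b}^2d^4(\log H)^3$ with integer coefficients of controlled height, and it is this global $P$ that is fed into the zero-counting step.

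The genuine gap is in your third step. To apply Jensen (Proposition~\ref{jenson}) you need an explicit lower bound $|g(z_0)|\ge\varepsilon$ at some concrete point; merely knowing that $g=P(\cdot,f(\cdot))\not\equiv 0$ because $f$ is transcendental gives nothing effective, and the final count degrades by $-\log|g(z_0)|$, which you have not controlled. This is exactly the technical heart of the paper's proof and it is not a routine optimization. The paper passes to the half-plane via $\theta(z)=(1+z)/(1-z)$, splits $P=R(X)+Q(X,Y)$ with $R(X)=P(X,0)$ and $Y\mid Q$, and uses the decay $|f(\theta^{-1}(x))|\le ab^{-(x+1)/2}$ to force $|Q(\theta^{-1}(x),f(\theta^{-1}(x)))|\le \tfrac12(x+1)^{-T}$ on an interval of length $>T+1$ around $x\asymp l_{a,b}T\log T\log H/\log\log H$. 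Since $(X+1)^{\lfloor T\rfloor}R(\theta^{-1}(X))\in\Z[X]$ has degree $\le T$, there is an integer $m$ in that interval with $|R(\theta^{-1}(m))|\ge (m+1)^{-T}$, hence $|P(\theta^{-1}(m),f(\theta^{-1}(m)))|\ge \tfrac12(m+1)^{-T}$. One then sets $g(z)=P(\theta^{-1}(m\theta(z)),f(\theta^{-1}(m\theta(z))))$, so $g(0)$ is bounded below, and a single application of Jensen on a pair of radii $r<R$ with $1-r\asymp (\log\log H)/(l_{a,b}^2dT\log T(\log H)^2)$ produces the bound. The exponents $9$ (and $4$ in the compact case, where one may take $T\asymp d^2\log H$) come out of this specific choice of $m$ and the resulting sizes of $\log M$, $-\log|g(0)|$ and $1/\log(R/r)$, not from a symmetric optimization of $\rho,D_1,D_2$ as you suggest. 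Without the integrality trick producing the lower bound at $\theta^{-1}(m)$, your Jensen step does not close.
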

Here $H(\alpha)$ is the multiplicative height of $\alpha$. See for instance Chapter 3 of Waldschmidt's book \cite{WaldschmidtBook} for a discussion of heights. Note that the rapid decay of $f$ rules out the possibility that $f$ is algebraic, unless $f=0$, and indeed such a bound may fail for algebraic functions.\\

There have been many results of this type since the fundamental work of Bombieri and Pila \cite{BombieriPila}. Building on this work, Pila proved \cite{PilaGeometricPostulation} a bound of the form $c_\epsilon H^\epsilon$ for a transcendental analytic function $f$ on $[0,1]$. This bound is essentially optimal, as Surroca \cite{SurrocaShort,SurrocaLong} and Bombieri and Pila \cite{PilaSubanalyticSurface1} showed. Surroca also showed that, in the situation of Pila's result, a bound of the form $c(\log H)^2$ holds at some unbounded sequence of heights (see also Gasbarri \cite{Gasbarri}). But to have bounds of this form holding at all heights we must make further assumptions on $f$. One possibility is to assume that $f$ satisfies some nice form of differential equation. In this direction, see work by Pila  \cite{PilaMildCurve,PilaPfaffCurve}, Thomas and the second author \cite{JonesThomasSurfaces,JonesThomasZeta}, Binyamini and Novikov \cite{BinyaminiNovikov} and Comte and Miller \cite{ComteMiller}.  In a different direction, Masser \cite{MasserZeta} proved a very precise zero estimate for the Riemann zeta function and used it to show that a $c(\log H)^2(\log\log H)^{-2}$ bound holds for the restriction of the zeta function to $(2,3)$. Besson \cite{Besson} established a similar bound for the restriction of the gamma function to a bounded interval (and also computed an explicit constant for Masser's result \cite{BessonExplicitConstant}). Independently, the first two authors proved \cite{BoxallJones1} bounds of the form $c(\log H)^{3+\epsilon}$ for gamma on the interval $(0,\infty)$, and for suitable restrictions of entire functions satisfying certain growth conditions, and further results for entire functions satisfying fairly weak growth conditions in \cite{BoxallJones2}. Related results for analytic functions on the disk are proved in the recent thesis of Pierre Villemot \cite{villemot}. 

We will prove a similar result to Theorem \ref{decay} for functions which grow exponentially along $S$, but before discussing that we mention an application of Theorem \ref{decay} to arithmetic dynamics. For this, suppose that $P$ is a polynomial in one variable, of degree $D$ at least $2$, with coefficients in a number field $K$. We write $P^{\circ n}$ for the $n$-th iterate of $P$, so $P^{\circ (n+1)}=P\circ P^{\circ n}$ and $P^{\circ 0}(z)=z$. Fix some $\alpha$ in $K$. For $n\ge 1$ consider the set
\[
S_{\alpha ,n} = \{\beta: P^{\circ n}(\beta) = P^{\circ n}(\alpha)\}.
\]
The field generated by this set is a Galois extension of $K$ since it is the splitting field of a polynomial. So we have a sequence of Galois extensions $K_n = K(S_{\alpha,n})/K$. For a generic choice of $P$ the size of this Galois group is expected to grow exponentially with $D^n$ . Boston and Jones give a precise statement concerning transcendental $\alpha$ and degree $D=2$ where they characterize the exceptional cases \cite[Theorem 4.1]{BostonJones}. In the algebraic case they give a conjecture. The general reasoning is that the Galois group of the splitting field of $P^n(X) - \alpha$ should have large image in the automorphism group of a $D$-ary tree with $n$ levels. In our situation one might expect the Galois group of $K_n$ to have large image in a $D$-ary tree of level $n-1$ and so, for $D=2$, to be of size roughly $2^{2^{n-1}}$. For $\alpha$ lying in a number field, the conjecture of Boston and Jones seems to be wide open.

For non-preperiodic $\alpha$, it follows from the conjecture of Boston and Jones that the solutions of $P^{\circ n}(X) = \alpha$ should have degree roughly $D^n$, or equivalently the polynomial $P^{\circ n}(X) - \alpha$ should have a bounded number of irreducible components. In contrast to the doubly-exponential growth of the Galois group this should hold for all polynomials. For example, for $P(X) = X^D$  such a bound follows from the irreducibility of cyclotomic polynomials. Here there is also progress for general polynomials, for example by Ingram \cite[Corollary 3]{Ingram} and de Marco \textit{et al} \cite{deMarco}. However in both  works $\alpha$ has to satisfy certain $p$-adic conditions. \\

Applying Theorem \ref{decay} with a strategy of the third author from  \cite{Harry}, and using $p$-adic methods of Ingram and DeMarco \textit{et al}  \cite{Ingram,deMarco}, we prove a result for our equation $P^{\circ n} (X) = P^{\circ n}(\alpha)$ and all algebraic $\alpha$. Moreover our results are effective. 

\begin{thm}\label{general_intro} Suppose that $P$, $K$ and $\alpha$ are as above. Let $\epsilon>0$. Then there are $c>0$, depending only on $P$ and $\epsilon$, and $c'>0$ depending only on $P$ such that 
\begin{align*}
[\Q(\beta):\Q] \ge c \frac{\min\{1,\hat{h}_P(\alpha)\}}{[K:\Q](1 + h(\alpha))^{4+\epsilon}} D^{\frac{n}{4}-\epsilon n}
\end{align*}
for
\[
 n \ge c' [K:\Q]h(\alpha) 
\]and some $\beta \in S_{\alpha,n}$. Moreover, $c$ and $c'$ are effectively computable. 
\end{thm}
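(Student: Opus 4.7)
The plan is to prove this theorem by first establishing an upper bound of the form $cD^{3n/4 + \epsilon n}$ on the number $N_n$ of irreducible factors of $P^{\circ n}(X) - P^{\circ n}(\alpha)$ over $K$, as anticipated in the abstract. Since this polynomial has degree $D^n$, pigeonhole then yields an irreducible factor over $K$ of degree at least $D^n/N_n \geq c^{-1} D^{n/4 - \epsilon n}$, and any root $\beta$ of this factor lies in $S_{\alpha,n}$ and satisfies $[\Q(\beta):\Q] \geq c^{-1} D^{n/4 - \epsilon n}/[K:\Q]$. The explicit factors $\min\{1, \hat{h}_P(\alpha)\}$ and $(1+h(\alpha))^{4+\epsilon}$ in the statement should emerge from tracking how the constant $c$ depends on $\alpha$ through the parameters of the Böttcher coordinate and the comparison between naive and canonical heights.

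For the bound on $N_n$, the standard device is to split by degree: for any $T \geq 1$,
\[
N_n \leq F(T) + D^n/T,
\]
where $F(T)$ is the number of roots $\beta$ of $P^{\circ n}(X) = P^{\circ n}(\alpha)$ with $[K(\beta):K] \leq T$. To bound $F(T)$ I would argue place by place: since $\{P^{\circ n}(\alpha)\}$ is infinite, $\alpha$ is not preperiodic, so $\hat{h}_P(\alpha) > 0$ and $\alpha$ has positive local canonical height at some place $v$ of $K$. The hypothesis $n \geq c'[K:\Q] h(\alpha)$ is designed so that by the $n$-th iterate, $P^{\circ n}(\alpha)$ has escaped into the domain on which a Böttcher-type coordinate at $v$ is defined. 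If $v$ is archimedean, the Böttcher coordinate $\Phi$ conjugates $P$ to $z \mapsto z^D$ in a neighbourhood of $\infty$ in $\C$, and the preimages of $P^{\circ n}(\alpha)$ in the basin of infinity at $v$ are given by $\Psi(\zeta)$ with $\Psi = \Phi^{-1}$ and $\zeta^{D^n} = \Phi(P^{\circ n}(\alpha))$; a change of variables sending the relevant part of the Böttcher domain into the unit disk, combined with a rescaling by an appropriate $D^n$-th root of $\Phi(P^{\circ n}(\alpha))$, should produce an analytic function to which Theorem \ref{decay} applies, yielding $F(T) \leq c_0 T^{q}$ with $q$ essentially $3$ after absorbing logarithmic factors into $\epsilon$. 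If $v$ is non-archimedean, the $p$-adic analytic methods of Ingram \cite{Ingram} and DeMarco \textit{et al.}\ \cite{deMarco} provide the analogous bound. Balancing by taking $T = D^{n/(1+q)}$ then gives the target $cD^{3n/4 + \epsilon n}$ bound on $N_n$.

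The hard part is fitting the archimedean case to Theorem \ref{decay}. The Böttcher inverse $\Psi(w) = w + O(1)$ has only polynomial decay at infinity, whereas Theorem \ref{decay} requires exponential decay in the sense $|f(z)| \leq a b^{-\phi(z)}$, and the naive setup ``$\beta = \Psi(\zeta)$ on the graph of $\Psi$'' is inadequate because $\zeta$ is typically transcendental, so the preimages do not sit as pairs of algebraic points on the graph. Following the strategy of the third author in \cite{Harry}, the resolution is to work instead with an auxiliary function encoding the deviation of $\Psi$ from its trivial analogue for $z \mapsto z^D$, composed and rescaled so that evaluation at $D^n$-th roots of unity produces algebraic values (for instance, ratios of preimages in a controlled extension of $K$) while simultaneously furnishing the required exponential decay on a sub-disk. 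Combining this with the $p$-adic contributions, and carefully tracking how all constants depend on $\hat{h}_P(\alpha)$ (through the depth of escape) and on $h(\alpha)$ (through naive-to-canonical height comparisons and the choice of $T$), should produce the precise form of the bound claimed in the statement.
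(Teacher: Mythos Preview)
Your route through the irreducible-factor count $N_n$ is not the paper's, and as sketched it would deliver a weaker exponent. The paper argues directly: set $d = \max_{\beta\in S_{\alpha,n}}[K(\beta):\Q]$, iterate $\alpha$ by $P^{\circ k}$ (with $D^k \ll 1/\hat{h}_P(\alpha)$) until it lies in a B\"ottcher domain at some place $v$, and observe that the image $P^{\circ k}(S_{\alpha,n})\subseteq S_{P^{\circ k}(\alpha),n}$, of size at least $D^{n-k-1}$, consists entirely of points of degree at most $d$. The counting input (Theorem~\ref{proportiondynamics} if $v$ is archimedean, Theorem~\ref{Galoispadic} otherwise) then gives $D^{n-k-1} \le c\, d^{4+\epsilon}(1+h(\alpha))^{4+\epsilon}$, from which $d \ge cD^{n/4-\epsilon n}$ follows immediately. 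The bound on $N_n$ (Theorem~\ref{irreducible}) is proved \emph{afterwards} and needs an extra combinatorial step (Lemma~\ref{powerlemma}); going via it would also worsen the $[K:\Q]$-dependence from $[K:\Q]$ to $[K:\Q]^5$.

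The concrete gap in your sketch is the exponent $q$. The archimedean input is the ``moreover'' clause of Theorem~\ref{decay}, of shape $c\, d^4(\log d)^2(\log H)^4$, so after absorbing logarithms one has $q=4$, not $3$. With $q=4$ your split $N_n \le F(T) + D^n/T$ optimises at $T\approx D^{n/5}$ and yields only $N_n \le cD^{4n/5+\epsilon n}$, hence by pigeonhole only $D^{n/5-\epsilon n}$ for the maximal degree---strictly weaker than the claimed $D^{n/4}$. Getting $N_n \le cD^{3n/4}$ from $q=4$ requires exploiting $\sum_{d_i\le R} d_i \le cR^4$ for \emph{all} $R$ simultaneously (Lemma~\ref{powerlemma}), not a single cutoff. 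Finally, the archimedean construction is simpler than you suggest: take $f^*(\tau) = 1/\Phi_P^{-1}\bigl(e^{2\pi i(\tau - i/24)}\Phi_P(\alpha)\bigr)$ and pull back to the disk; since $\Phi_P^{-1}(w)\sim 1/w$ near $0$ this already decays like $e^{-2\pi\Im\tau}$, and at $\tau = k/D^n + i/24$ it takes the algebraic value $1/\beta$ for $\beta\in S_{\alpha,n}$---no ``ratios of preimages'' or ``deviation from the trivial analogue'' are needed.
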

Here $h$ is the logarithmic height and $\hat{h}_P$ is the canonical dynamical height associated with $P$. We note that the constants do not depend on $K$. 

The main new ingredient in our proof of Theorem \ref{general_intro} lies in the following.

\begin{thm}\label{lowerbounddynamics} Given $P$ as above, there is an effectively computable $R>0$ depending only on $P$ with the following property.  For each $\epsilon >0$ there exists an effectively computable positive constant $c_\epsilon$ depending only on $\epsilon$ and $P$ such that if $\alpha$ is algebraic with $|\alpha|\ge R$ then
\begin{align*}
[\Q(\beta):\Q]\geq c_\epsilon(1+h(\alpha))^{-1 -\epsilon} D^{\frac{n}{4}-\epsilon n}.
\end{align*}
for some $\beta \in S_{\alpha,n}$. 
\end{thm}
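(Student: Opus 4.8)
The plan is to describe $S_{\alpha,n}$ explicitly via the Böttcher coordinate of $P$ at infinity, realise its $D^n$ points as the values of a single transcendental analytic function on the unit disk at arguments of bounded degree and controlled height, and then feed this into the exponentially‑growing companion of Theorem~\ref{decay} (this is, in essence, the strategy of the third author combined with the new counting input). Write $\psi$ for the Böttcher coordinate: effectively in terms of $P$, $\psi$ is a biholomorphism from $\{G_P>v_0\}$ onto $\{|w|>R_0'\}$, where $G_P$ is the escape‑rate (Green) function of $P$, $v_0=\max(0,\sup_{c}G_P(c))$ over the critical points $c$ of $P$, $R_0'=e^{v_0}$, and $\psi(P(z))=\psi(z)^D$, $\psi(z)/z\to\lambda_P:=a_D^{1/(D-1)}$ as $z\to\infty$ ($a_D$ the leading coefficient of $P$). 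Choosing $R$ effectively large enough that $|\alpha|\ge R$ forces $G_P(\alpha)$ to exceed both $v_0$ and $\sup_c G_P(c)$, one gets that $\alpha$ lies in the domain of $\psi$, that $P^{\circ n}$ is unramified over $P^{\circ n}(\alpha)$, and — since $G_P\circ P=D\,G_P$ makes every $\beta\in S_{\alpha,n}$ satisfy $G_P(\beta)=G_P(\alpha)>v_0$ and $\psi(\beta)^{D^n}=\psi(\alpha)^{D^n}$ — that $S_{\alpha,n}=\{\beta_k:=\psi^{-1}(\psi(\alpha)\,\omega^k):0\le k<D^n\}$ with $\omega=e^{2\pi i/D^n}$, a set of $D^n$ distinct nonzero algebraic numbers. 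Finally $\hat h_P\circ P=D\,\hat h_P$ together with $P^{\circ n}(\beta_k)=P^{\circ n}(\alpha)$ give $\hat h_P(\beta_k)=\hat h_P(\alpha)$, so with the effective bound $|\hat h_P-h|\le C_P$ one obtains $h(\beta_k)\le h(\alpha)+2C_P$.

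The function is then built as follows. The map $\Phi_n(t):=\psi^{-1}\!\big(\psi(\alpha)e^{2\pi it/D^n}\big)$ is single‑valued analytic on the half‑plane $\{\operatorname{Im}t<Y_0\}$, $Y_0=\tfrac{D^n}{2\pi}\log(|\psi(\alpha)|/R_0')$, which exceeds $2D^n$ once $R$ is so large that $|\psi(\alpha)|>R_0'e^{4\pi}$; it is transcendental (it is an exponential‑type function regardless of whether $\psi$ is itself algebraic — e.g. $\Phi_n(t)=\alpha e^{2\pi it/D^n}$ when $P(z)=a_Dz^D$), and $\Phi_n(k)=\beta_k$ for integers $k$. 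Composing with the Möbius map $m(t)=t/(t-2iD^n)$, which carries $\{\operatorname{Im}t<D^n\}$ biholomorphically onto the disk, gives $f:=\Phi_n\circ m^{-1}$, analytic and transcendental on the disk, with $f(m(k))=\beta_k$; here $m(k)=k/(k-2iD^n)\in\mathbb Q(i)$ has degree at most $2$, logarithmic height at most $2n\log D+O(1)$, and $|m(k)|\le 1/\sqrt5<\tfrac12$ for $0\le k<D^n$. A computation gives $|\operatorname{Im}m^{-1}(\zeta)|\le 2D^n\phi(\zeta)$ on the disk, whence $|f(\zeta)|\le c_P\,|\alpha|\,(e^{4\pi})^{\phi(\zeta)}$ there; the point of taking a half‑plane of height $\asymp D^n$ (rather than of fixed height) is precisely that this makes the exponential base $e^{4\pi}$ independent of $n$.

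Now set $d:=\max_{0\le k<D^n}[\mathbb Q(\beta_k):\mathbb Q]$. The $D^n$ points $(m(k),\beta_k)$ are distinct, lie in $\{|\zeta|\le\tfrac12\}$, have nonzero second coordinate, joint degree at most $2d$ and height at most $H:=e^{\max(2n\log D,\,h(\alpha))+O_P(1)}$, and $f$ is transcendental with the growth bound above; the $|\alpha|\le r$ case of the exponential‑growth analogue of Theorem~\ref{decay} (with $r=\tfrac12$ and $b=e^{4\pi}$) therefore gives $D^n\le c_P(2d)^4(\log 2d)^2(\log H)^4$. Using $d\le D^n$ (so $\log 2d\ll_P n$) and $\log H\ll_P n(1+h(\alpha))$, one solves for $d$ and absorbs the powers of $\log d$, $\log D$ and $n$ into the $\epsilon$‑terms — $D^{\epsilon n}(1+h(\alpha))^{\epsilon}$ dominating any fixed power of $n$ once $n$ passes an effective threshold, with the conclusion being made trivial below that threshold by shrinking $c_\epsilon$ — to obtain $d\ge c_\epsilon(1+h(\alpha))^{-1-\epsilon}D^{\,n/4-\epsilon n}$ with $c_\epsilon$ effective in $\epsilon$ and $P$. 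Taking $\beta$ to be a $\beta_k$ of maximal degree completes the proof.

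I expect the real work to be in the construction of $f$ with all constants uniform in $n$, together with the exact shape of the growth analogue of Theorem~\ref{decay}. Two points are delicate. First, the Möbius normalisation must simultaneously keep the exponential base fixed (forcing the half‑plane to have height $\asymp D^n$) and drag all $D^n$ distinguished arguments into a fixed sub‑disk of radius $<1$, where the sharp $d^4(\log d)^2(\log H)^4$ count is available rather than the weaker $d^9(\log H)^9$ one; it is this that pins the final exponent to $n/4$. Second, the counting constant in the growth version must be independent of the amplitude $a\asymp|\alpha|$ of $f$ (dependence on $b$ and $r$ is harmless): since $\log|\alpha|$ is only controlled by $[\mathbb Q(\alpha):\mathbb Q]\,h(\alpha)\le d\,h(\alpha)$, any positive power of $\log a$ in the estimate would feed a power of $d$ back onto the right‑hand side and degrade the exponent below $n/4$. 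So one must use (or re‑derive) the growth analogue in the form where the amplitude plays no role in the counting bound.
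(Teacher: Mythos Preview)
Your overall architecture---B\"ottcher coordinate at infinity, enumerate $S_{\alpha,n}$ as values of a single analytic function at arguments of controlled height and degree, pull back to the disk by a M\"obius map scaled so that the exponential base is independent of $n$, then apply a counting theorem---is exactly the paper's approach. The difference is that you work with $\psi^{-1}$ itself, which \emph{grows}, and then appeal to an ``exponential-growth analogue'' of Theorem~\ref{decay} in its sharp compact-subset form $c_P d^4(\log d)^2(\log H)^4$. The paper instead applies $\Phi_P^{-1}$ (essentially your $\psi^{-1}$) and then takes the \emph{reciprocal}: the function $f^*(\tau)=1/\Phi_P^{-1}(\exp(2\pi i(\tau-i/24))\Phi_P(\alpha))$ decays like $c\,e^{-2\pi\Im\tau}$ with $c$ depending only on $P$, and is bounded by $1$ on the disk after pulling back. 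This puts one squarely in the hypotheses of Theorem~\ref{decay}, whose ``moreover'' clause for a fixed compact sub-disk gives the $d^4(\log d)^2(\log H)^4$ bound with a constant depending only on $P$. No growth result is needed at all.

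The gap in your version is precisely the one you flag yourself at the end. The growth theorem actually proved in the paper (Theorem~\ref{growth_thm}, refined as Theorem~\ref{s4growth}) gives a bound of shape $l_{a,b}^{17}d^{18}(\log H)^{17}$, with no compact-subset improvement, and---crucially---with explicit polynomial dependence on $l_{a,b}=\log a/\log b$. In your set-up $a\asymp|\alpha|$ and the upper-bound parameter $c_0$ also depends on $|\alpha|$, so the counting constant carries powers of $\log|\alpha|$; as you note, $\log|\alpha|$ is only bounded by $[\Q(\alpha):\Q]\,h(\alpha)\le d\,h(\alpha)$, and feeding this back destroys the exponent $n/4$. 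Your proposed remedy (``re-derive the growth analogue in the form where the amplitude plays no role'') is not carried out and is not obviously feasible: the $l_{a,b}$ factors in Section~\ref{growth_section} enter both through Lemma~\ref{diskgrowthnottoolarge} and through the upper bound $M$ in the Jensen step, and there is no compact-subset ``moreover'' proved for growth. The one-line fix is to replace your $f$ by $1/f$; then $|1/f|$ is bounded on the disk by a constant depending only on $P$ (since $|\psi^{-1}|$ is bounded below on the relevant annulus by the radius of the B\"ottcher domain), and Theorem~\ref{decay} applies directly with $a,b$ effective in $P$ alone. That is exactly what the paper does.
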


Note that since $|\alpha|$ is large, $\alpha$ cannot be preperiodic. 

Although it might be desirable to have such a lower bound for the degree of each element of $S_{\alpha,n}$, this cannot hold as we could have for instance $\alpha \in \Q$ and $\beta=\alpha$. Instead one could ask about `primitive' solutions $\beta$ which do not satisfy $P^{\circ m}(\beta)=P^{\circ m}(\alpha)$ for $m<n$. We don't prove a bound for these, but we are able to show that the proportion of numbers in $S_{\alpha,n}$ of low degree decays exponentially. We fix $R$ as in Theorem \ref{lowerbounddynamics} and $\alpha$ algebraic with $|\alpha|\ge R$.  For $\delta>0$ let
\[
p_{\delta,\alpha,n} = \frac{ \# \{ \beta \in S_{\alpha,n} : [\Q(\beta):\Q]\le D^{\delta n}\}}{D^n}.
\]
Then we have the following.
\begin{thm}\label{proportiondynamics}There exists an effectively computable positive constant $c_\varepsilon$ depending only on $\epsilon,P$ and $\delta$ such that  
\begin{align*}
p_{\delta,\alpha,n} \leq c_\varepsilon D^{(4\delta +\epsilon-1)n}(1+h(\alpha))^{4+\epsilon}.
\end{align*}
\end{thm}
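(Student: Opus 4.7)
The plan is to parameterize $S_{\alpha,n}$ using the B\"ottcher coordinate of $P$ and then count low-degree elements directly, via Theorem~\ref{decay} together with the companion result for exponentially growing functions that the paper announces. Let $\phi$ denote the B\"ottcher coordinate of $P$, analytic on $\{|z|>R_0\}$ for some $R_0$ depending only on $P$ and satisfying $\phi\circ P=\phi^D$ with $\phi(z)\sim z$ at infinity; set $\psi=\phi^{-1}$. Choose $R$ large enough that whenever $|\alpha|\ge R$ the whole set $S_{\alpha,n}$ lies in the domain of $\phi$, for every $n$. Iterating the functional equation yields $\phi\circ P^{\circ n}=\phi^{D^n}$, so $P^{\circ n}(\beta)=P^{\circ n}(\alpha)$ is equivalent to $\phi(\beta)=\phi(\alpha)\zeta$ for some $\zeta\in\mu_{D^n}$. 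Thus $\zeta\mapsto\beta_\zeta:=\psi(\phi(\alpha)\zeta)$ is a bijection $\mu_{D^n}\to S_{\alpha,n}$, and in particular $\#S_{\alpha,n}=D^n$.

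Two standard facts control heights. The relation $\hat{h}_P\circ P=D\hat{h}_P$ forces $\hat{h}_P(\beta)=\hat{h}_P(\alpha)$ for every $\beta\in S_{\alpha,n}$, and the estimate $|h-\hat{h}_P|\le c_P$ then yields $h(\beta)\le h(\alpha)+2c_P$, so $H(\beta)\le H:=e^{h(\alpha)+2c_P}$. To count low-degree $\beta_\zeta$, transport the assignment $w\mapsto\psi(\phi(\alpha)w)$ to the unit disk by pre-composing with a conformal change of variable carrying a neighborhood of $(0,1)$ to a suitable sector around the circle $|w|=|\phi(\alpha)|$; since $\psi(w)-w$ is bounded on its domain, the resulting analytic function $F$ on the disk satisfies the hypothesis of the announced growth-variant of Theorem~\ref{decay}, with constants depending only on $P$. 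Each $\zeta\in\mu_{D^n}$ yielding a $\beta_\zeta$ of degree at most $D^{\delta n}$ over $\Q$ produces an algebraic point on the graph of $F$ of degree at most $d:=[K:\Q]D^{\delta n}$ and height at most $H':=\max(D^n,H)$, so $\log H'=O(n\log D+h(\alpha))$. The improved bound $c_{a,b,r}\,d^4(\log d)^2(\log H')^4$ of Theorem~\ref{decay} then dominates the number of such $\zeta$ by
\[
c_P\,D^{4\delta n}(\delta n\log D)^2(n\log D+h(\alpha)+c_P)^4.
\]
Dividing by $D^n$ and absorbing every polynomial-in-$n$ factor into $D^{\epsilon n}$ gives exactly $p_{\delta,\alpha,n}\le c_\epsilon D^{(4\delta+\epsilon-1)n}(1+h(\alpha))^{4+\epsilon}$.

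The main obstacle is the construction of $F$: one must choose the conformal identification and the rescaling by $\phi(\alpha)$ so that the hypothesis of the counting theorem holds with $a,b,r$ depending only on $P$, and so that the relevant $\zeta$'s are carried into a restricted region $|z|\le r<1$ needed for the sharper $d^4(\log d)^2(\log H)^4$ estimate rather than the weaker $d^9(\log d)^2(\log H)^9$ one. The growth, rather than decay, of $\psi$ at infinity is precisely why the paper's growth-variant of Theorem~\ref{decay}, rather than Theorem~\ref{decay} itself, is required. Once $F$ is in place, the rest is height bookkeeping.
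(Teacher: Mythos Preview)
Your overall plan---parametrize $S_{\alpha,n}$ via the B\"ottcher coordinate and then count with a Bombieri--Pila--type theorem---is exactly the paper's. But there is a genuine gap in the execution. You build $F$ from $w\mapsto\psi(\phi(\alpha)w)$, correctly observe that $\psi$ \emph{grows} at infinity, and say this forces you to use the growth variant rather than Theorem~\ref{decay}. You then quote the bound $c_{a,b,r}\,d^4(\log d)^2(\log H')^4$---but that is the ``moreover'' clause of the \emph{decay} theorem, not of the growth theorem. The growth result in the paper (Theorem~\ref{s4growth}) gives roughly $d^{18}(\log d)^9(\log H)^{17}$ and has no compact--subset improvement; feeding $d=D^{\delta n}$ into that yields an exponent $18\delta-1+\epsilon$, not $4\delta-1+\epsilon$, so Theorem~\ref{proportiondynamics} does not follow.

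The paper avoids this by a reciprocal: it works with
\[
f^*(\tau)=\frac{1}{\Phi_P^{-1}\bigl(\exp(2\pi i(\tau-i/24))\,\Phi_P(\alpha)\bigr)},
\]
so the target values are $1/\beta$ rather than $\beta$. Since $[\Q(1/\beta):\Q]=[\Q(\beta):\Q]$ and $H(1/\beta)=H(\beta)$, nothing is lost, but now $|f^*(\tau)|\le c\,e^{-2\pi\Im\tau}$ \emph{decays}, and its pullback $f=f^*\circ\mu$ to the disk satisfies the hypotheses of Theorem~\ref{decay}. Moreover the relevant inputs $\mu^{-1}(k/D^n+i/24)$ all have imaginary part $1/24$ upstairs, hence lie in a fixed compact subset of the disk, so the sharper $d^4(\log d)^2(\log H)^4$ bound applies. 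That is what produces the exponent $4\delta$.

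A secondary issue: you assert that the graph point has degree at most $[K:\Q]D^{\delta n}$, but your first coordinate comes from a root of unity $\zeta\in\mu_{D^n}$ (through an unspecified conformal map), and $[\Q(\zeta):\Q]$ can itself be of order $D^n$. The paper's parametrization makes the first coordinate $\mu^{-1}(k/D^n+i/24)\in\Q(i)$, of degree at most $2$ and height $O(D^{2n})$, independently of $\beta$; this is what keeps $d$ controlled by $[\Q(\beta):\Q]$ alone.
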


Combining Theorem \ref{proportiondynamics} with $p$-adic methods we get a non-trivial bound on the number of irreducible factors of $P^n(X) - P^n(\alpha)$, whenever $\alpha$ is algebraic and not preperiodic. 
\begin{thm}\label{irreducible} Suppose that $\alpha$ is algebraic and not preperiodic. There exist effectively computable constants $c_1,c_2$ depending only on $\epsilon$ and $P$ such that the number $r_{\alpha,n}$ of irreducible factors of $P^n(X) - P^n(\alpha)$ over $K$ is bounded by 
\begin{align*}
r_{\alpha,n} \leq c_1 \frac{[K:\Q]^5(1+h(\alpha))^{4+\epsilon}}{\min\{1,\hat{h}_P(\alpha)\}}D^{\frac {3n}4 + \epsilon n}.
\end{align*}
for $n \geq c_2(1+h(\alpha))[K:\Q]$.
\end{thm}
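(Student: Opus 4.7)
The plan is to factor $f_n(X):=P^{\circ n}(X)-P^{\circ n}(\alpha)=\prod_{i=1}^{r_{\alpha,n}}f_i(X)$ into irreducibles over $K$ with $K$-degrees $d_i$ summing to $D^n$, then split by a threshold $d_0:=D^{\delta n}$ with $\delta$ chosen close to $1/4$. The number of ``large'' factors ($d_i>d_0$) is trivially at most $D^n/d_0=D^{(1-\delta)n}$, which meets the $D^{3n/4}$ target once $\delta\approx 1/4$. Each ``small'' factor ($d_i\leq d_0$) has every root $\beta$ satisfying $[\Q(\beta):\Q]\leq d_i[K:\Q]\leq[K:\Q]D^{\delta n}$, so the number of small factors is at most $\#\{\beta\in S_{\alpha,n}:[\Q(\beta):\Q]\leq[K:\Q]D^{\delta n}\}$, and controlling this set is the heart of the matter.

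For $|\alpha|\geq R$ (with $R$ from Theorem \ref{lowerbounddynamics}), Theorem \ref{proportiondynamics} bounds the small-root count directly, with $[K:\Q]^{4+\epsilon}$ absorbed into the constant from enlarging the threshold. For general $\alpha$ one passes to an iterate: $\alpha$ not preperiodic gives $\hat h_P(\alpha)>0$, so some place $v$ of $K$ has canonical local height $\hat\lambda_v(\alpha)>0$, and for $m$ of order $\log_D(1/\hat h_P(\alpha))$ the point $P^{\circ m}(\alpha)$ lies in the $v$-adic basin of infinity. Writing $f_n(X)=g_{n-m}(P^{\circ m}(X))$ with $g_{n-m}(Z):=P^{\circ(n-m)}(Z)-P^{\circ(n-m)}(P^{\circ m}(\alpha))$, a Galois-orbit argument shows that each $K$-irreducible factor of $g_{n-m}$ of degree $e$ lifts to $K$-irreducible factors of $f_n$ of degree at least $e$, hence to at most $D^m$ of them; consequently $r_{\alpha,n}\leq D^m\cdot r_{P^{\circ m}(\alpha),\,n-m}$. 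The small-factor count of $g_{n-m}$ is then controlled at $v$ by the $p$-adic Newton-polygon / basin-of-infinity techniques of Ingram \cite{Ingram} and DeMarco et al.\ \cite{deMarco}, which supply the needed $v$-adic companion to Theorem \ref{proportiondynamics}. Balancing the two contributions at $\delta\approx 1/4$ yields the stated $D^{3n/4+\epsilon n}$; the $\min\{1,\hat h_P(\alpha)\}^{-1}$ prefactor absorbs $D^m$ from the reduction, $[K:\Q]^5$ collects the $[K:\Q]^{4+\epsilon}$ from the proportion bound together with the $K$- to $\Q$-degree conversion, and the hypothesis $n\geq c_2(1+h(\alpha))[K:\Q]$ is what ensures $m<n$ so that $g_{n-m}$ is meaningful.

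The main obstacle is the $p$-adic ingredient: Theorem \ref{proportiondynamics} in isolation, balanced optimally at $\delta=1/5$, delivers only exponent $4/5$, so reaching $3/4$ demands that the $v$-adic input be genuinely sharper — morally, a degree lower bound at $v$ of order $D^{n/3-\epsilon n}$ rather than the $D^{n/4-\epsilon n}$ of Theorem \ref{lowerbounddynamics} — so that the small-factor count becomes $\sim D^{(3\delta+\epsilon)n}$ and balances $(1-\delta)n=3n/4$. Extracting such a uniform proportion statement from \cite{Ingram,deMarco}, and tracking the height, degree and $\hat h_P$ dependencies cleanly through the reduction to $P^{\circ m}(\alpha)$, is the delicate part of the argument; the remainder — collecting constants and propagating the $D^m$ into the $\hat h_P(\alpha)^{-1}$ factor — is essentially bookkeeping.
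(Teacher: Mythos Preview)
Your diagnosis of the obstacle is where the proposal goes wrong. You correctly observe that a single threshold $d_0=D^{\delta n}$ combined with Theorem~\ref{proportiondynamics} balances at $\delta=1/5$ and yields only $D^{4n/5}$. But the remedy is not a sharper $p$-adic input: in the case where the place at which $P^{\circ k}(\alpha)$ first escapes is archimedean there is no $p$-adic assistance to call on, so your route cannot close the gap there. The missing ingredient is purely combinatorial, the paper's Lemma~\ref{powerlemma}: if positive integers $d_1,\dots,d_M$ sum to $X$ and satisfy $\sum_{d_i\le R}d_i\le cR^\theta$ for \emph{every} $R>0$, then $M\le c_\theta\,c\,X^{1-1/\theta}$. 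Now $\sum_{d_i\le R}d_i$ is exactly $\#\{\beta\in S_{\alpha,n}:[K(\beta):K]\le R\}$, and Theorem~\ref{proportiondynamics} (after passing to the iterate $P^{\circ k}(\alpha)$ and absorbing the $[K:\Q]$ and $\hat h_P(\alpha)^{-1}$ factors into $c$) supplies this hypothesis with $\theta=4+\epsilon$. The lemma then returns $r_{\alpha,n}\le c_\theta\,c\,D^{n(1-1/(4+\epsilon))}$, which is the claimed $D^{3n/4+\epsilon' n}$.

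The point is that your single-threshold argument uses the constraint at only one $R$ and then bounds $\#\{i:d_i\le R\}$ by $\sum_{d_i\le R}d_i$, discarding information twice over; the resulting exponent is $\theta/(\theta+1)$. Lemma~\ref{powerlemma} optimises across all thresholds simultaneously --- the extremal configuration has roughly $ck^{\theta-2}$ of the $d_i$ equal to each value $k$ --- and attains exponent $(\theta-1)/\theta$. For $\theta=4$ this is exactly the jump from $4/5$ to $3/4$. In the non-archimedean case the same lemma with $\theta=2$ (from the ``moreover'' clause of Theorem~\ref{Galoispadic}) gives the still better exponent $1/2$; the final bound is the worse of the two cases, namely $3/4$.
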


To prove Theorem \ref{lowerbounddynamics} we use the theory of B\"ottcher maps in combination with the counting result, Theorem \ref{decay}. It will be clear from the proof that the same result holds for any rational function with a superattracting fixed point at infinity of order $D\ge 2$. This strategy of combining the B\"ottcher map with counting results also opens up the possibility of applying the Pila-Zannier strategy (see for instance \cite{ZannierBook}) to certain problems in arithmetic dynamics. We will pursue this elsewhere.\\

The proof of Theorem \ref{decay} develops the work of the first two authors in \cite{BoxallJones1}, but the proof is more involved as there is less room for large disks on the unit disk. The methods are quite flexible and by combining them with an idea from \cite{BoxallJones2} we are able to give a similar result for functions which grow exponentially with respect to $\phi$. We prove the following. 
\begin{thm}\label{growth_thm} Suppose that $f$ is analytic in the unit disk and that there are positive reals $a,b,c_0$ with $b>1$ such that $|f(z)|\le \phi(z)^{c_0\phi(z)}$ for $|z|\geq 1/2$ and $|f(x)|\ge ab^{\phi(x)}$ for $x$ in $(0,1)$. Then there is a positive real number $c$ with the following property. For $H\ge e$ the number of rationals $q$ in $(0,1)$ such that $f(q)$ is also rational and both $q$ and $f(q)$ have multiplicative height at most $H$ is at most
\[
c(\log H)^{18}.
\]
Here, $c$ depends only on $a,b$ and $c_0$ and can be computed from them.
\end{thm}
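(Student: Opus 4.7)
The plan is to adapt the determinant method used in the proof of Theorem~\ref{decay} to the growth setting; the key preliminary observation is that the lower bound $|f(x)|\ge ab^{\phi(x)}$ on $(0,1)$ automatically truncates the region in which candidate rationals can live. Specifically, if $q\in(0,1)\cap\Q$ has $f(q)\in\Q$ with $H(q),H(f(q))\le H$, then $|f(q)|\le H$, so the lower growth bound forces $\phi(q)\le c_1\log H$ for some $c_1$ depending only on $a$ and $b$, i.e.\ $q\le 1-(c_1\log H)^{-1}$. All candidates thus lie in the truncated interval $I_H:=[0,1-(c_1\log H)^{-1}]$.

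Next I would cover $I_H$ by a family of complex disks adapted to $\phi$. For each integer $k$ with $1\le k\le c_1\log H$, cover the real set $\{x\in I_H:\phi(x)\in[k,k+1)\}$ by overlapping disks $\Delta$ of radius $\rho_k$, each sitting inside a concentric disk $\Delta'$ of radius $R_k\sim 1/k$ on which the upper growth hypothesis gives $|f(z)|\le (2k)^{2c_0 k}$. Balancing the inflation factor $R_k/\rho_k$ against the growth of $f$ and against $\log H$, one ends up with $O((\log H)^2)$ such disks to optimise over.

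On each disk $\Delta$ I would then run the Bombieri--Pila determinant argument for the pair $(z,f(z))$. Given $M=\binom{D+2}{2}$ candidate rationals $z_1,\ldots,z_M$ in $\Delta$, the determinant of the $M\times M$ matrix $(z_i^{\,a}f(z_i)^{\,b})_{i,(a,b)}$ with $a+b\le D$ admits a Schwarz-type upper bound
\[
|\det|\le M!\,(\rho_k/R_k)^{M(M-1)/2}\bigl(\max_{\Delta'}|f|\bigr)^{DM}
\]
from Cauchy estimates, and --- when non-zero --- a lower bound $|\det|\ge H^{-2DM}$ from clearing denominators. Forcing the former below the latter, for a suitable choice of $D$ polylogarithmic in $H$, yields a non-trivial polynomial relation $P(z,f(z))=0$ on $\Delta$; as the growth of $f$ on $(0,1)$ precludes $f$ from being algebraic, $P$ defines a proper algebraic subvariety of the graph, whose rational points of height at most $H$ are enumerated by a standard elementary count in terms of $D$ and $\log H$.

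The main obstacle is the fine optimisation needed to absorb the growth factor $k\log k$ coming from $\log\max_{\Delta'}|f|\sim Ck\log k$ into the parameters $(D,\rho_k,R_k)$, and then to sum the resulting local counts across all $O((\log H)^2)$ covering disks. The exponent $18$ should emerge from this optimisation: roughly twice the exponent $9$ of Theorem~\ref{decay}, reflecting the fact that the growth term consumes about half of the margin available in the determinant inequality compared with the decay case.
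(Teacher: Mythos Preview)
Your truncation step is correct and matches Lemma~\ref{diskgrowthnottoolarge}, and your covering/determinant construction is essentially Lemma~\ref{diskpolynomial}: the paper too produces a polynomial $P$ of degree $T$ polylogarithmic in $H$ with $P(q,f(q))=0$ at every candidate $q$. The genuine gap is what you do with $P$. You assert that the resulting rational points ``are enumerated by a standard elementary count in terms of $D$ and $\log H$,'' but this is the heart of the matter and is not elementary. The candidates lie on the curve $\{P=0\}$, yet a plane curve of degree $T\sim(\log H)^3$ can carry far more than polylogarithmically many rational points of height $\le H$. What actually constrains the count is that the candidates are also zeros of the nonzero analytic function $g(z)=P(z,f(z))$; bounding those zeros requires a quantitative zero estimate---an upper bound for $|g|$ on a slightly larger disk, a \emph{lower} bound $|g(z_0)|\ge\eta>0$ at an explicit point, and then Jensen's inequality (Proposition~\ref{jenson}). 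Your sketch supplies none of this.

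Producing that lower bound is in fact the new idea specific to the growth case. The paper writes $P(X,Y)=\sum_{i=0}^L P_i(X)Y^i$ with $P_L\ne 0$ and passes to the reversed polynomial $P'(X,Y)=\sum_i P_{L-i}(X)Y^i$, so that
\[
P(x,f(x))=f(x)^{L}\,P'\!\bigl(x,\,1/f(x)\bigr).
\]
Because $|f|$ grows on $(0,1)$, the reciprocal $1/f$ decays exponentially there, and one can replay the decay-case argument verbatim to locate an integer $m$ with $|P'(\theta^{-1}(m),1/f(\theta^{-1}(m)))|\ge\tfrac12(m+1)^{-T}$, whence the same lower bound for $|P(\theta^{-1}(m),f(\theta^{-1}(m)))|$. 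Only with this in hand does Jensen apply. The exponent $18$ then comes out of the zero count: $\log M$ is now of order $T^3(\log H)^3$ (because $|f|$ is large on the enlarged disk, unlike in Theorem~\ref{decay} where $|f|\le 1$), and combined with $1/\log(R/r)\sim T(\log H)^2$ this yields $(\log H)^{17}(\log\log H)^{6}\le c(\log H)^{18}$. So the exponent is not a doubling of the $9$ from the decay case but a consequence of the much larger supremum $M$ in the growth regime.
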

In fact we give a more precise form for points of bounded degree, with polynomial dependence on the degree, and with growth on a possibly larger set $S$ containing $(0,1)$.

In common with almost all counting results of Bombieri-Pila type, our proof requires the construction of a polynomial vanishing at the relevant points and satisfying various other properties. This construction is carried out in the next section. Sections \ref{decay_section} and \ref{growth_section} then give the proofs of Theorems \ref{decay} and \ref{growth_thm} respectively. In Section \ref{dynamics_real} we prove Theorems \ref{lowerbounddynamics} and \ref{proportiondynamics} and then in Section \ref{dynamics_p_adic} we combine Theorem \ref{lowerbounddynamics} with the p-adic methods mentioned above to prove Theorem \ref{general_intro}. In a short final section we give some further examples of functions to which our counting results apply. \\

We are grateful to Patrick Ingram for pointing out the B\"ottcher map to us, to Tom Tucker for his suggestion of combining non-archimedean and archimedean methods and to Hung Bui for help with Lemma \ref{powerlemma}.

\section{Polynomials for functions on the disk}\label{polynomials}
Recall that for $z$ in the unit disk we let $\phi(z)=\frac{1}{1-|z|}$. In this section we use Masser's polynomial construction \cite[Proposition 2, page 2039]{MasserZeta} to capture algebraic points of bounded height and degree on algebraic curves. First we recall the height we will be using, so fix an algebraic number $\alpha$ with minimal polynomial $P$ over the integers, and suppose that $P$ has positive leading coefficient, $a$ say. The height of $\alpha$ is defined to be
\[
H(\alpha)= \left( a \prod \max \{1, |z|\}\right)^{\frac{1}{\deg P}}
\]
with the product taken over the roots $z$ of $P$. Below we will also be using $H$ as a bound on the heights of the points considered. This shouldn't lead to confusion. Here is the result of Masser's that we need.
\begin{propn}\label{propn2} For any integer $d\ge 1$ and real $A>0, Z>0, M>0,H\ge 1,T\ge \sqrt{8d}$, let $f_1,f_2$ be analytic functions on an open set containing the closed disk of radius $2Z$ around the origin and suppose $|f_1(z)|,|f_2(z)|\le M$ on this disk. Let $\mathcal{Z}$ be a finite set of complex numbers and suppose that, for all $z,z'\in \mathcal{Z}$,
\begin{itemize}
\item[(a)] $|z|\le Z$,
\item[(b)] $|z-z'|\le \frac{1}{A}$,
\item[(c)] $[\Q(f_1(z),f_2(z)):\Q]\le d$ and
\item[(d)] $H(f_1(z)),H(f_2(z))\le H$.
\end{itemize}
If
\begin{equation}\label{AZetc}
(AZ)^T > (4T)^{96d^2/T}(M+1)^{16d}H^{48d^2}
\end{equation}
then there is a nonzero polynomial $P$ of degree at most $T$ such that $P(f_1(z),f_2(z))=0$ for all $z$ in $\mathcal{Z}$. Moreover $P$ can be taken to have integer coefficients of modulus at most $2(T+1)^2H^T$.
\end{propn}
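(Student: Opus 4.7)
The plan is to follow the classical Bombieri--Pila / Masser \cite{MasserZeta} construction: build an integer polynomial $P$ of degree at most $T$ making $F(z) := P(f_1(z), f_2(z))$ small on the disk of radius $1/A$ containing $\mathcal{Z}$, and then force $F(z)=0$ on $\mathcal{Z}$ by a Liouville-type argument.

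Let $N = \binom{T+2}{2}$ be the dimension of the space of polynomials in two variables of degree $\le T$, enumerate the monomials as $m_1,\ldots,m_N$, and fix a centre $z_0 \in \mathcal{Z}$. Write $P = \sum_k a_k m_k$, and note that each $F_k(z) := m_k(f_1(z), f_2(z))$ is analytic on $|z-z_0|\le Z$ with $|F_k(z)| \le (M+1)^T$ there. First I would apply Siegel's lemma -- either directly, after reducing the complex Taylor-vanishing conditions at $z_0$ to a rational system via expansion in a $\Q$-basis of a suitable finitely generated number field, or via the Bombieri--Vaaler version for complex coefficients -- to find nonzero integers $a_k$ bounded by $2(T+1)^2 H^T$ such that $F$ has a zero of order at least $K$ at $z_0$, for a parameter $K$ of order $T^2/d$. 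Schwarz's lemma then gives
\[
|F(z)| \le N\,(\max|a_k|)\,(M+1)^T\,(AZ)^{-K}
\]
for every $z \in \mathcal{Z}$. Meanwhile $F(z) \in K_z := \Q(f_1(z), f_2(z))$ is algebraic of degree $\le d$ with multiplicative height bounded by $N\,(\max|a_k|)\,H^T$ up to mild combinatorial factors, so Liouville gives $|F(z)| \ge (N\,(\max|a_k|)\,H^T)^{-d}$ unless $F(z)=0$. When the hypothesis on $(AZ)^T$ holds the first is strictly smaller than the second, so $F\equiv 0$ on $\mathcal{Z}$.

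The precise exponents in $(AZ)^T > (4T)^{96d^2/T}(M+1)^{16d}H^{48d^2}$ emerge from balancing the three contributions. With $K \asymp T^2/(48d)$, Siegel's lemma just supplies the required coefficient bound, and the condition $K\log(AZ) \gtrsim dT \log H + T \log(M+1)$ needed for the Schwarz bound to beat the Liouville bound rearranges, after multiplication by $T/K$, into the stated inequality; the factor $(4T)^{96d^2/T}$ absorbs the Cauchy-constant and $\log N$-type losses from Siegel's lemma.

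The hard part will be the arithmetic bookkeeping needed to pin down exactly these exponents, and in particular the careful treatment of the Taylor coefficients of the $F_k$ at $z_0$: these are transcendental in general, so one must either expand them in an appropriate finite $\Q$-basis (paying a factor $d$ in the number of conditions, already built into the $48d^2$ exponent) or invoke a complex version of Siegel's lemma. Once this is set up, the remaining structure of the proof is a routine combination of Siegel, Cauchy--Schwarz and Liouville.
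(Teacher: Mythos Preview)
The paper does not give its own proof here; it simply cites Masser \cite{MasserZeta} for the main statement and \cite{BoxallJones1} for the coefficient bound. So the comparison is with Masser's argument.

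Your proposal has a real gap at the Siegel step. You want integer coefficients $a_k$ so that $F=\sum_k a_k F_k$ vanishes to order $K$ at some $z_0\in\mathcal{Z}$. But for $j\ge 1$ the conditions $F^{(j)}(z_0)=0$ are linear equations in the $a_k$ whose coefficients $F_k^{(j)}(z_0)$ involve the \emph{derivatives} of $f_1,f_2$ at $z_0$, and these are typically transcendental even though $f_1(z_0),f_2(z_0)$ are algebraic. Neither of your suggested fixes works: transcendental numbers do not lie in any finitely generated number field one could expand over, and the Bombieri--Vaaler Siegel lemma applies to systems over number fields, not over $\C$. One cannot make a nontrivial $\Z$-linear combination of generic complex numbers exactly zero; at best Dirichlet makes it small, and then the clean Schwarz factor $(AZ)^{-K}$ is lost and the whole balancing changes. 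Note too that in your set-up $H$ never enters the Cauchy estimates controlling the Taylor coefficients, so the claimed bound $2(T+1)^2 H^T$ has no visible source in your argument.

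Masser's argument sidesteps this by making the Siegel conditions \emph{arithmetic} rather than analytic: choose a subset $\mathcal{Z}_0\subset\mathcal{Z}$ of size $L$ of order $T^2/d$ and require $P(f_1(z),f_2(z))=0$ for each $z\in\mathcal{Z}_0$. By (c) each such condition lies in a number field of degree at most $d$ and unfolds into at most $d$ linear equations over $\Q$, so Siegel's lemma over $\Q$ genuinely applies and delivers an integer $P$ with the stated coefficient bound (this is where $H$ enters, via (d)). Now $F$ has $L$ zeros in a disk of radius $1/A$ sitting inside the disk of radius $Z$; a Schwarz/Blaschke estimate then gives $|F|$ of order $|P|(M+1)^T(AZ)^{-L}$ on the small disk, and Liouville forces $F(z)=0$ for every remaining $z\in\mathcal{Z}$. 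The Siegel--Schwarz--Liouville architecture you outline is the right one; the essential missing idea is that the Siegel step must be fed the algebraic \emph{values} at many points of $\mathcal{Z}$, not the transcendental jets at a single point.
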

\begin{proof} Without the `moreover' statement, this is Proposition 2 on page 2039 of Masser's paper \cite{MasserZeta} (except that there $T$ was assumed integral, but this plays no role in the proof). The `moreover' statement follows from Masser's proof, see the discussion on page 1145 of \cite{BoxallJones1}.
\end{proof}

We will need the following simple lemma as in \cite{additive}. 
\begin{lemma}\label{cover} Let $\mathcal{D}_R$ be a closed disk of radius $R$. Then $\mathcal{D}_R$ can be covered by $k$ closed disks of radius $r>0$, where
\begin{align*}
k\leq 4\left(1+\frac{\sqrt{2}}{2}\frac{R}{r}\right)^2.
\end{align*} 
\end{lemma}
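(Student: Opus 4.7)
The plan is to reduce the covering problem for the disk to a simple grid-counting problem, by enclosing $\mathcal{D}_R$ in a square and tiling that square with smaller squares each of which can be circumscribed by a disk of radius $r$.

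First I would enclose $\mathcal{D}_R$ in an axis-parallel closed square $Q$ of side length $2R$. The key geometric observation is that a square of side $s$ is inscribed in a disk whose radius equals the circumradius of the square, namely $s\sqrt{2}/2$. So if we take $s := r\sqrt{2}$, then every square of side $s$ is contained in a closed disk of radius $r$ centered at its centre. It therefore suffices to cover $Q$ by translates of such a square.

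Next I would tile $Q$ by a regular grid of squares of side $s = r\sqrt{2}$ (extending the grid slightly beyond $Q$ if $2R$ is not an integer multiple of $s$). The number of such squares meeting $Q$ is at most
\[
\left\lceil \frac{2R}{s} \right\rceil^2 \;=\; \left\lceil \frac{\sqrt{2}\,R}{r} \right\rceil^2 \;\le\; \left( 1 + \frac{\sqrt{2}\,R}{r} \right)^2.
\]
Assigning to each such square its circumscribed disk of radius $r$ produces a covering of $Q \supseteq \mathcal{D}_R$ by that many disks of radius $r$.

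Finally, to match the statement exactly, I would weaken the bound by noting that
\[
\left( 1 + \frac{\sqrt{2}\,R}{r}\right)^2 \;\le\; \left( 2 + \frac{\sqrt{2}\,R}{r}\right)^2 \;=\; 4\left( 1 + \frac{\sqrt{2}}{2}\cdot\frac{R}{r}\right)^2,
\]
which is the desired inequality. There is no real obstacle here — the whole argument is elementary plane geometry; the only thing to be careful about is the choice of the constant $s = r\sqrt{2}$ so that each tile really does fit inside a disk of the prescribed radius, and a cosmetic weakening at the end so as to land on the exact form stated in the lemma.
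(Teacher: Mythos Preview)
Your proof is correct: the circumscribed-square tiling with tiles of side $r\sqrt{2}$ gives $\lceil \sqrt{2}R/r\rceil^2 \le (1+\sqrt{2}R/r)^2$ disks, and your final cosmetic weakening lands exactly on the stated bound. The paper itself does not prove this lemma but simply quotes it from \cite{additive}, so there is no in-paper argument to compare against; your approach is the standard one and almost certainly what is intended in the cited reference.
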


We now suppose that $f$ is analytic on the disk and that there is a $c_0>0$ such that
\begin{equation}\label{diskorder1}
|f(z)| \le \phi(z)^{c_0\phi(z)}
\end{equation}
for $|z|\ge 1/2$.

Given positive reals $a,b$, with $a\ge b>1$, we let $l_{a,b}= \frac{\log a}{ \log b}$. For the relevance of this number see Lemma \ref{diskdecaynottoolarge}.

\begin{lemma}\label{diskpolynomial} Let $d\ge 2$ and $H\ge e^e$. There is a constant  $c_1>0$ depending only on  $c_0$ and effectively computable from it with the following property. If $a$ and $b$ are positive reals with $a\ge b^e>1$ and $a\ge e$ then there is a nonzero polynomial $P$ of degree at most 
\[
T=c_1l_{a,b}^3\log l_{a,b}d^4\log d(\log H)^3\log\log H
\]
 with integer coefficients of modulus at most $2(T+1)^2 H^T$ such that if $\alpha$ satisfies 
\begin{align*}
&(i) |\alpha|\leq 1- \frac{1}{2l_{a,b}d\log H}\\
&(ii) [\Q(\alpha,f(\alpha)):\Q]\leq d, \text{ and } H(\alpha), H(f(\alpha))\leq H
\end{align*}
then $P(\alpha, f(\alpha))=0$. Moreover if $|f|\leq c_0$ on the disk then we can choose $c_1$ effectively in terms of $c_0$ such that $ T=c_1l_{a,b}^2d^4(\log H)^3$. 
\end{lemma}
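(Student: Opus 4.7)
The plan is to cover $\{|\alpha|\le\rho\}$, where $\rho=1-1/\Phi$ and $\Phi=2l_{a,b}d\log H$, by many small disks, apply Proposition \ref{propn2} on each cluster, and form the product of the resulting polynomials. I take the Masser outer radius $Z=1/(8\Phi)$; then for every cluster centre $z_0$ with $|z_0|\le\rho$ the closed disk of radius $2Z$ about $z_0$ sits inside the open unit disk (since $|z_0|+2Z\le 1-3/(4\Phi)$), and on that disk $\phi\le 4\Phi/3$. In the moreover case we may take $M=c_0$; in general, the growth condition $|f|\le\phi^{c_0\phi}$ for $|z|\ge 1/2$ (combined with the maximum modulus principle for centres with $|z_0|<1/2$) gives $M=(4\Phi/3)^{4c_0\Phi/3}$, so $\log(M+1)=O(\Phi\log\Phi)$.

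Next, set $A=X/Z=8X\Phi$ with $X>1$ a constant depending only on $c_0$ and large enough to absorb the $(4T_0)^{96d^2/T_0}$ factor in Masser's inequality. With $AZ=X$, the hypothesis of Proposition \ref{propn2} holds as soon as $T_0$ is at least a constant multiple of $d^2\log H+d\log(M+1)$, yielding $T_0=O(d^2\log H)$ in the moreover case and $T_0=O(l_{a,b}d^2(\log H)\log\Phi)$ in general (using $d\cdot\Phi\log\Phi=2l_{a,b}d^2(\log H)\log\Phi$). By Lemma \ref{cover}, the set $\{|\alpha|\le\rho\}$ can be covered by $N=O((\rho A)^2)=O(l_{a,b}^2 d^2\log^2 H)$ cluster disks of radius $1/(2A)$. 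Applying Proposition \ref{propn2} on each cluster yields a nonzero integer polynomial $P_i$ of degree at most $T_0$, with coefficients of modulus at most $2(T_0+1)^2 H^{T_0}$, vanishing on the set of pairs $(\alpha,f(\alpha))$ for $\alpha$ in that cluster. Every valid $\alpha$ lies in some cluster, so $P=\prod_i P_i$ vanishes at every valid $(\alpha,f(\alpha))$ and has total degree $NT_0$.

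The moreover case now gives $NT_0=O(l_{a,b}^2 d^4\log^3 H)$, matching the claim. In general $NT_0=O(l_{a,b}^3 d^4(\log H)^3\log\Phi)$, and using $a\ge b^e\ge e$ (so $\log l_{a,b}\ge 1$) together with $H\ge e^e$ (so $\log\log H\ge 1$), the estimate $\log\Phi=\log 2+\log l_{a,b}+\log d+\log\log H\le C\log l_{a,b}\log d\log\log H$ for an absolute constant $C$ gives the stated form of $T$. For the coefficient bound, a routine bivariate estimate bounds the coefficients of $\prod_i P_i$ by $(T_0+1)^{O(N)}(2H^{T_0})^N$; the combinatorial factor $(T_0+1)^{O(N)}$ is dominated by $H^s$ with $s=O(N(\log T_0)/\log H)\ll NT_0$, so inflating $T$ by a constant factor (absorbed in $c_1$) yields $2(T+1)^2H^T$. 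I expect the main difficulty to be the parameter balancing, and in particular handling the $d\log(M+1)$ term in the growth case, which is what produces the extra $l_{a,b}\log\Phi$ factor over the moreover bound.
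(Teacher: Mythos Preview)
Your proposal is correct and follows essentially the same approach as the paper: cover the disk $\{|\alpha|\le\rho\}$ by $O((l_{a,b}d\log H)^2)$ small disks, apply Proposition~\ref{propn2} on each (with translated functions $z+a_n$, $f(z+a_n)$), take the product, and estimate degree and coefficients. Your parameter choices differ only in harmless constants (you take $AZ=X$ for a large absolute $X$, the paper takes $AZ=2$), and your handling of $\log\Phi\le C\log l_{a,b}\,\log d\,\log\log H$ is exactly how the paper absorbs the extra logarithmic factors into the stated form of $T$.
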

\begin{proof} We write $c$ for various positive constants, all effectively computable from $c_0$. Let $\mathcal{D}$ be the closed disk centred at 0 and of radius $1-\frac{1}{2l_{a,b}d\log H}$.
By Lemma \ref{cover} we can choose $N$ disks of radius $\frac{1}{32l_{a,b}d\log H}$ that cover $\mathcal{D}$ and such that 
\[
N < c\left(l_{a,b}d\log H\right)^2
\] 
with $c$ absolute. Let $a_1,\ldots, a_N$ be the centres of these disks. We may assume $a_1,\ldots, a_N\in\mathcal{D}$. For $n=1,\ldots,N$ we define the translates $f_{1,n}(z) =z +a_n, f_{2,n}(z)=f(z+a_n)$. We will apply Proposition \ref{propn2} to each of these pairs of functions.

%
Let $A=16l_{a,b}d \log H$ and $Z=\frac{2}{A}$. Let $\mathcal{Z}_n$ be the set of $\alpha-a_n$ such that:
\begin{enumerate}
\item[(i)] $|\alpha-a_n|\le \frac{1}{2A}$,
\item[(ii)] $[\Q(\alpha,f(\alpha)):\Q]\le d$ and $H(\alpha),H(f(\alpha))\le H$.
\end{enumerate}
We now apply Proposition \ref{propn2} with $\mathcal{Z}=\mathcal{Z}_n$ and $f_1=f_{1,n},f_2=f_{2,n}$. Clearly we have requirements (c) and (d). And (a) and (b) are fine, by our choices of $A,Z$. For $|z|\le 2Z$ we have
\[
|f_{2,n}(z)| \le \sup \left\{ |f(w)| : |w| \le 1-\frac{1}{4l_{a,b}d\log H} \right\}.
\]
So by \eqref{diskorder1} we can take
\begin{equation}\label{valueforM}
M=(l_{a,b}d\log H)^{cl_{a,b}d\log H}.
\end{equation}

To ensure that \eqref{AZetc} holds, we can take 
\[
T'=c l_{a,b} \log l_{a,b}d^2\log d\log H\log\log H.
\]
Then the proposition gives a nonzero polynomial $P_n$ of degree at most $T'$ with integer coefficients of modulus at most $2(T'+1)^2H^{T'}$ such that $P_n(f_{1,n}(z),f_{2,n}(z))=0$ for $z$ in $\mathcal{Z}_n$. Let $P$ be the product of all these $P_n$. Then since $N<c\left(l_{a,b}d\log H\right)^2$ the polynomial $P$ has degree at most 
 \[
 T =cl_{a,b}^3\log l_{a,b}d^4\log d(\log H)^3\log\log H
 \]
and if $\alpha$ satisfies (i) and (ii) in the statement of the lemma then $P(\alpha,f(\alpha))=0$. To complete the proof, we need to estimate the coefficients of $P$. To this end, we write
\begin{eqnarray*}
P_n(X,Y) &=&\sum_{i+j\le T'}a^{(n)}_{i,j}X^iY^j \\
P(X,Y)&= &\sum_{\alpha+\beta \le T}b_{\alpha,\beta}X^{\alpha}Y^{\beta}.
\end{eqnarray*}
And then for any $\alpha,\beta$ with $\alpha+\beta\le T$ we have
\begin{equation}\label{yuck}
b_{\alpha,\beta}=\sum a^{(1)}_{i^{(1)},j^{(1)}}\cdots a^{(N)}_{i^{(N)},j^{(N)}}
\end{equation}
with the summation taken over all sequences $i^{(1)},j^{(1)},\ldots,i^{(N)},j^{(N)}$ such that
\begin{eqnarray*}
 \alpha = i^{(1)}+\cdots + i^{(N)} \\
 \beta = j^{(1)}+\cdots +j^{(N)}\\
\end{eqnarray*}
and
\[
 i^{(n)}+j^{(n)} \le T'
\]
for $n\le N$. For these $n$ we certainly have $i^{(n)},j^{(n)}\le T'$ and so there are at most $T'+1$ choices for each of $i^{(n)},j^{(n)}$ and then there are at most $(T'+1)^{2N}$ summands in \eqref{yuck}. Each of these summands is a product of $N$ integers each bounded in modulus by $2(T'+1)^2H^{T'}$ and so each summand is bounded by $2^N(T'+1)^{2N}H^{T'N}$ and we can conclude that
$$
|b_{\alpha,\beta} |\le 2^N (T'+1)^{4N} H^{T'N}.
$$
After perhaps rechoosing the constant in $T$, we have $$2^N (T'+1)^{4N} H^{T'N}\le  2(T+1)^2H^T.$$

If we in fact have $|f(z)| \le c_0$ rather than just \eqref{diskorder1} then we proceed exactly as above, but take $M=\max \{ 1,c_0\}$ in place of \eqref{valueforM} above. We then find that for \eqref{AZetc} we can take $T'=cd^2\log H$ and then the proof continues as above, and leads to $T=cl_{a,b}^2d^4(\log H)^3$.
\end{proof}

\section{Functions on the disk with decay}\label{decay_section}
Recall that for $z$ in the unit disk, $\phi(z)=\frac{1}{1-|z|}$. Suppose that $f$ is analytic on the disk and that there are positive real $a$ and $b>1$ such that

\begin{equation}\label{diskrapiddecay}
|f(z)|\le \frac{a}{b^{\phi(z)}}
\end{equation}
for $z$ in $S \subset D$, where $(0,1)\subset S$.   We assume (as we clearly can) that $a\ge b^e>1$ and $a\ge e$. Further that $|f|\leq 1$. In this setting, we prove the following.

\begin{thm}\label{s3decay} Let $d\ge 2$ and $H\ge e^e$. There is an absolute effectively computable constant $c>0$ such that the number of algebraic $\alpha \in S$ such that $f(\alpha)$ is algebraic and non-zero with $[\Q(\alpha,f(\alpha)):\Q]\leq d$ and $H(\alpha),H(f(\alpha))\leq H$ is at most
\begin{align*}
cl_{a,b}^6(\log l_{a,b})^3d^9(\log d)^2(\log H)^9.
\end{align*}

\end{thm}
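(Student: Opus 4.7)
The plan is to follow the classical Bombieri--Pila strategy in three stages tailored to the disk and to the rapid decay: first confine the counted points, then invoke the polynomial construction of Lemma \ref{diskpolynomial}, then bound the zeros of the resulting analytic function via a Jensen-type estimate.

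For the confinement, any counted $\alpha$ has $f(\alpha)$ non-zero algebraic of degree $\le d$ and multiplicative height $\le H$, so a standard Liouville-type lower bound gives $|f(\alpha)| \ge H^{-d}$ (up to an absolute factor). Combined with the hypothesis $|f(\alpha)| \le a b^{-\phi(\alpha)}$, this forces $b^{\phi(\alpha)} \le a H^d$, hence $\phi(\alpha) \le (\log a + d\log H)/\log b$, which rearranges to
\[
|\alpha| \le 1 - \frac{1}{C\, l_{a,b}\, d\, \log H}
\]
for an effective $C$ depending only on $a,b$. This matches hypothesis (i) of Lemma \ref{diskpolynomial}, and applying that lemma yields a nonzero polynomial $P \in \Z[X,Y]$ of total degree at most
\[
T = c_1\, l_{a,b}^3 \log l_{a,b}\, d^4 \log d\, (\log H)^3 \log\log H
\]
with integer coefficients of absolute value at most $2(T+1)^2 H^T$, vanishing at each counted pair $(\alpha, f(\alpha))$. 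The exponential decay rules out $f$ being algebraic, so $g(z) := P(z, f(z))$ is a nonzero analytic function on the disk, and every counted $\alpha$ is one of its zeros.

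To count those zeros in the inner disk $|z| \le r := 1 - 1/(C\, l_{a,b}\, d\, \log H)$, I would apply a Jensen-type inequality on the concentric disk of radius $R := 1 - 1/(2C\, l_{a,b}\, d\, \log H)$: writing $g(z) = (z-z_0)^k h(z)$ with $h(z_0) \ne 0$ at a suitable base point and bounding the zeros of $h$ in $|z - z_0|\le r$ gives
\[
N - k \le \frac{\log \sup_{|z| \le R}|g(z)| - \log |h(z_0)|}{\log(R/r)}.
\]
The numerator is controlled by the coefficient bound on $P$ together with $|f| \le 1$, yielding $\log \sup |g| \ll T\log H$ plus lower-order terms; the denominator satisfies $\log(R/r) \gg 1/(l_{a,b}\, d\, \log H)$. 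Running this argument on each irreducible factor of $P$ separately, summing local counts, and tracking the $l_{a,b}$-dependence through both $T$ and the radii should deliver the claimed exponents.

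The main obstacle is the very narrow annulus $r \le |z| \le R$ forced by the rapid decay itself: since both $r$ and $R$ must be pushed within $O(1/(l_{a,b}\, d\, \log H))$ of $1$, the ratio $\log(R/r)$ is inevitably small, so the Jensen estimate has to be executed carefully. A secondary obstacle is the choice of base point $z_0$ at which to evaluate $g$: one needs a useful lower bound on $|g(z_0)|$, which requires either factoring out the zero at a chosen point (and absorbing the multiplicity into $N$) or using an averaged form of Jensen's identity as in \cite{BoxallJones1}. Collecting the contributions through the factorisation of $P$ and the $l_{a,b}$-bookkeeping then yields the stated bound $c\, l_{a,b}^6(\log l_{a,b})^3 d^9(\log d)^2(\log H)^9$.
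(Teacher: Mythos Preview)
Your outline correctly identifies the three-stage structure and executes the confinement step, but there is a genuine gap at the base-point step, which you flag as an obstacle without actually resolving. Neither proposed fix works: writing $g(z)=(z-z_0)^k h(z)$ still leaves you needing a quantitative lower bound on $|h(z_0)|$, and an averaged Jensen identity likewise requires $\log|g(z_0)|$ bounded below at some specific $z_0$. Knowing merely that $g\not\equiv 0$ (from transcendence of $f$) gives no such bound, and without one Proposition~\ref{jenson} yields nothing useful.

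The paper \emph{manufactures} a good base point from the decay hypothesis and the integrality of $P$. Write $P(X,Y)=R(X)+Q(X,Y)$ with $R(X)=P(X,0)$ and $Y\mid Q$ (first dividing $P$ by $Y$ if $R\equiv 0$, which is harmless since only $\alpha$ with $f(\alpha)\neq 0$ are counted). Passing to the half-line via $\theta(z)=\tfrac{1+z}{1-z}$, the decay $|f(\theta^{-1}(x))|\le ab^{-(x+1)/2}$ forces $|Q(\theta^{-1}(x),f(\theta^{-1}(x)))|\le \tfrac12(x+1)^{-T}$ throughout an interval of length exceeding $T+1$ near $x\asymp l_{a,b}T\log T\log H/\log\log H$. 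Since $(X+1)^{\lfloor T\rfloor}R(\theta^{-1}(X))$ is a polynomial with integer coefficients, that interval contains an integer $m$ with $|R(\theta^{-1}(m))|\ge (m+1)^{-T}$, whence $|P(\theta^{-1}(m),f(\theta^{-1}(m)))|\ge \tfrac12(m+1)^{-T}$. One then takes $g(z)=P\bigl(\theta^{-1}(m\theta(z)),f(\theta^{-1}(m\theta(z)))\bigr)$, so that $g(0)$ carries this explicit lower bound, and applies Proposition~\ref{jenson}. The counted $\alpha$ land, via Lemma~\ref{moebius}, inside $|z|\le r$ with $r=1-c\log\log H/\bigl(l_{a,b}^2 dT\log T(\log H)^2\bigr)$, not the radius you wrote; the factor $m$ enters here and is essential to the final exponents.

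A secondary point: since $|f|\le 1$ on the whole disk, it is the ``moreover'' clause of Lemma~\ref{diskpolynomial} that applies, giving the smaller $T=c_1 l_{a,b}^2 d^4(\log H)^3$. Your larger $T$ would push the final exponents beyond those claimed.
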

This clearly implies Theorem \ref{decay} (apart from the ``moreover'' part which we shall deduce at the end of this section). We begin with a bound on the size of the $\alpha$ in the theorem. It is here that it is crucial that $f(\alpha)$ is not zero. Recall that $l_{a,b}= \frac{\log a}{\log b}$.

\begin{lemma}\label{diskdecaynottoolarge}
Let $\alpha \in S$ be algebraic and such that $f(\alpha)$ is algebraic and non-zero, $[\Q(\alpha,f(\alpha)):\Q]\leq d$ and $H(\alpha), H(f(\alpha))\leq H$.

Then \[|\alpha|\leq 1-\frac{1}{2l_{a,b}d\log H}.\]
\end{lemma}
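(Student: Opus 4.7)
The plan is to combine a standard Liouville-type lower bound on nonzero algebraic numbers of bounded height and degree with the decay hypothesis on $f$.

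First I would observe that $\beta := f(\alpha)$ is a nonzero algebraic number of degree $e \le d$ and height $H(\beta) \le H$, and moreover $|\beta| \le 1$ by the blanket assumption $|f|\le 1$. For any nonzero algebraic $\beta$ of degree $e$ with $|\beta|\le 1$, the classical lower bound $|\beta| \ge H(\beta)^{-e}$ holds: writing the minimal polynomial as $a\prod_i(X-z_i)$ with positive leading coefficient $a$ and conjugates $z_1=\beta, z_2,\dots,z_e$, one has $|\prod_i z_i| \ge 1/a$, and since $|\beta|\le 1$ the product $\prod_{i\ge 2}\max(1,|z_i|)$ equals $H(\beta)^{e}/a$. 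Hence $|\beta|\ge H(\beta)^{-e}\ge H^{-d}$.

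Next I would apply the decay hypothesis, which is valid because $\alpha \in S$: this gives $|f(\alpha)| \le a\, b^{-\phi(\alpha)}$. Combining with the previous step yields $b^{\phi(\alpha)} \le a H^d$, and taking logarithms produces
\[
\phi(\alpha) \le \frac{\log a}{\log b} + \frac{d\log H}{\log b} = l_{a,b} + \frac{d\log H}{\log b}.
\]
Since $a\ge e$, we have $\log a\ge 1$, so $1/\log b \le l_{a,b}$, and therefore $\phi(\alpha) \le l_{a,b}(1+d\log H)$. Using $d\ge 2$ and $H\ge e^e$ gives $d\log H\ge 1$, so $\phi(\alpha)\le 2 l_{a,b}\, d\log H$.

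Finally I would unravel the definition of $\phi$: from $\phi(\alpha) = 1/(1-|\alpha|) \le 2l_{a,b} d\log H$ we get $1-|\alpha| \ge 1/(2l_{a,b}d\log H)$, which is precisely the claimed bound. There is no serious obstacle here; the only point to keep in mind is the essential role of the hypothesis $f(\alpha)\ne 0$ (needed to apply the Liouville bound) and of $|f|\le 1$ (needed to obtain the sharp form $|\beta|\ge H(\beta)^{-e}$ rather than a weaker variant).
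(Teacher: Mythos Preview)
Your proof is correct and follows essentially the same route as the paper's: combine the Liouville lower bound $|f(\alpha)|\ge H^{-d}$ with the decay hypothesis $|f(\alpha)|\le a b^{-\phi(\alpha)}$ to obtain $\phi(\alpha)\le 2l_{a,b}d\log H$, and then unravel $\phi$. One minor remark: the inequality $|\beta|\ge H(\beta)^{-d}$ holds for \emph{every} nonzero algebraic $\beta$ of degree at most $d$ (the case $|\beta|>1$ being trivial), so the hypothesis $|f|\le 1$ is not actually needed at this step, contrary to your closing comment.
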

\begin{proof}
By a fundamental property of the height we have $|\beta|\geq H(\beta)^{-d}$ whenever $\beta\neq 0$ and $[\Q(\beta):\Q]\le d$. Thus 
\begin{align*}
H^{-d} \leq |f(\alpha)| \leq ab^{-1/(1-|\alpha|)}
\end{align*}
and so
\begin{align*}
-d\log H \leq \log a  -\frac{1}{1-|\alpha|}\log b.
\end{align*}
Hence $$1-|\alpha| \geq \log b/( d\log H +\log a) \geq \frac{\log b}{\log a}/2d\log H.$$
\end{proof}
%
As in the earlier work of the first two authors \cite{BoxallJones1,BoxallJones2} we use the following result to count zeros of functions.
\begin{propn}\label{jenson} Let $r,R$ be real numbers with $R>r>0$ and suppose $g$ is analytic on the disk $\{ |z| \le R\}$ and bounded in modulus by some $M$. If $|g(0)|\ne 0$ then the number of zeros of $g$ in $\{ |z|\le r\}$ is at most
$$
\frac{1}{\log \frac{R}{r}}\left( \log M -\log |g(0)|\right).
$$
\end{propn}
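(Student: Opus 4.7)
The plan is to deduce this from Jensen's formula, which is the standard tool for bounding the number of zeros of an analytic function in terms of its maximum modulus on a larger disk.

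First I would recall Jensen's formula: if $g$ is analytic on $\{|z|\le R\}$ with $g(0)\ne 0$ and zeros $a_1,\dots,a_n$ in the open disk $\{|z|<R\}$ (listed with multiplicity), then
\[
\log|g(0)| \;=\; -\sum_{k=1}^n \log\frac{R}{|a_k|} \;+\; \frac{1}{2\pi}\int_0^{2\pi}\log|g(Re^{i\theta})|\,d\theta.
\]
Rearranging gives
\[
\sum_{k=1}^n \log\frac{R}{|a_k|} \;=\; \frac{1}{2\pi}\int_0^{2\pi}\log|g(Re^{i\theta})|\,d\theta \;-\; \log|g(0)|.
\]

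Next, let $N$ denote the number of zeros of $g$ lying in the closed disk $\{|z|\le r\}$. (A small technical point: if $g$ has zeros exactly on $|z|=R$ one can apply the argument at a slightly smaller radius $R' < R$ and then let $R' \to R$; for this planning sketch I will ignore it.) Each such zero $a_k$ satisfies $|a_k|\le r$, hence $\log(R/|a_k|) \ge \log(R/r) > 0$, so the left-hand side above is bounded below by $N\log(R/r)$. On the right-hand side, $|g(Re^{i\theta})|\le M$ gives the upper bound $\log M - \log|g(0)|$. Combining,
\[
N\log\frac{R}{r} \;\le\; \log M - \log|g(0)|,
\]
which is exactly the claimed inequality after dividing by $\log(R/r)>0$.

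There is no real obstacle: this is a textbook consequence of Jensen's formula and the only item requiring a sentence of care is the harmless extension to zeros on the boundary circle $|z|=R$ (handled by taking $R'\nearrow R$ and using that $N$ is integer-valued and the integral of $\log|g(R'e^{i\theta})|$ depends continuously on $R'$ for $R'$ near $R$ away from the finitely many zeros). Everything else is direct substitution.
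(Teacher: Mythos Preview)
Your proof is correct and is precisely the standard Jensen's-formula argument. The paper does not give its own proof of this proposition but simply refers the reader to \cite[Theorem~1.1, page~340]{LangComplexAnalysis} or page~171 of \cite{Titchmarsh}, where exactly this derivation appears; so your approach matches what the paper intends.
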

(For the proof see \cite[Theorem 1.1, page 340]{LangComplexAnalysis} or page 171 of \cite{Titchmarsh}.)

Let $\theta(z)=\frac{1+z}{1-z}$, then $\theta^{-1}(z)=\frac{z-1}{z+1}$. We need some properties of this transformation for the proof of the next theorem. 
 
A Moebius transformation maps orthogonal circles to orthogonal circles. In particular, since $\theta$ maps the interval $[-1,1]$ to the real interval $[0,+\infty]$, it maps each circle of radius $0<  r<1$ centred at 0 to a circle $C_r$ that intersects the real line orthogonally in the points $\theta(-r)<1, \theta(r)>1$. Note that $\theta$ sends the unit circle to the imaginary line. Denote by $U_r $ the closed disk that has $C_r$ as its boundary and note that $C_{r'}$ is contained in $U_r$ for $r'\leq r$. Thus $U_r$ is the image of the disk of radius $r$ centred at 0 under $\theta$. We need the following to continue.
\begin{lemma} \label{moebius} Let $0< r\le r'<1$ and $s\geq 1$ be such that $\theta(-r') \leq \theta(-r)/s$. Then $U_r/s$ is contained in $U_{r'}$.  
\end{lemma}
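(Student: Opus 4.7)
The plan is to reduce the problem to an elementary question about nested real intervals. Both $U_r$ and $U_{r'}$ are closed disks whose boundary circles meet the real axis orthogonally, so each has its center on $\mathbb{R}$ and its intersection with $\mathbb{R}$ is a diameter. Explicitly, $U_r \cap \mathbb{R} = [\theta(-r), \theta(r)]$ and $U_{r'} \cap \mathbb{R} = [\theta(-r'), \theta(r')]$. The scaling $z \mapsto z/s$ is a real-linear map preserving this structure: $U_r/s$ is the closed disk with real-axis diameter $[\theta(-r)/s, \theta(r)/s]$ (note $\theta(-r), \theta(r) > 0$).

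First I will record the basic containment criterion: if $D_1, D_2$ are two closed disks both centered on the real axis, with $D_i \cap \mathbb{R} = [\alpha_i, \beta_i]$, then $D_1 \subset D_2$ if and only if $\alpha_2 \le \alpha_1$ and $\beta_1 \le \beta_2$. One direction is immediate. For the other, observe that since the two centers lie on the real axis, the points of $D_1$ maximizing distance to the center of $D_2$ are the two real endpoints $\alpha_1, \beta_1$; so if both lie in $D_2$ then all of $D_1$ does.

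Applied here, the containment $U_r/s \subset U_{r'}$ reduces to the two inequalities
\begin{equation*}
\theta(-r') \le \theta(-r)/s \quad \text{and} \quad \theta(r)/s \le \theta(r').
\end{equation*}
The first is the hypothesis. For the second, I will use the elementary identity $\theta(r)\theta(-r) = 1$ (which is immediate from $\theta(r) = (1+r)/(1-r)$), so the inequality $\theta(r)/s \le \theta(r')$ is equivalent to $\theta(-r') \le s\,\theta(-r)$. Since $\theta(-r) = (1-r)/(1+r)$ is a decreasing function of $r$ on $(0,1)$, the assumption $r \le r'$ gives $\theta(-r') \le \theta(-r)$, and multiplying by $s \ge 1$ finishes the argument.

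There is no genuine obstacle; the only point requiring care is to justify that disk containment really does reduce to interval containment of the real-axis diameters, which is the step above using that the extreme points of $D_1$ relative to the center of $D_2$ lie on the common line through the two centers.
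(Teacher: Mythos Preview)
Your proof is correct and follows essentially the same route as the paper: both arguments observe that $U_r/s$, like $U_r$ and $U_{r'}$, is a closed disk centred on the real axis (the paper phrases this via the Moebius transformation $\tilde\theta(z)=\theta(z)/s$, you via real scaling), and then reduce the containment to the nesting of the real-axis diameters $[\theta(-r)/s,\theta(r)/s]\subset[\theta(-r'),\theta(r')]$. Your justification of the disk-containment criterion is more explicit than the paper's, which simply asserts that checking the two orthogonal intersection points suffices. For the second inequality $\theta(r)/s\le\theta(r')$ your detour through the identity $\theta(r)\theta(-r)=1$ is correct but unnecessary: since $\theta$ is increasing on $(-1,1)$ and $s\ge 1$, one has directly $\theta(r)/s\le\theta(r)\le\theta(r')$.
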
 
\begin{proof} We note that $\tilde{\theta}(z)=\theta(z)/s$ is again a Moebius transformation that sends $[-1,1]$ to $[0,+\infty]$. It is enough to show that $\tilde{\theta}$ sends the circle of radius $r$ around 0 into $U_{r'}$. For this it is enough to note that the image under $\tilde{\theta}$ of this circle intersects the real line orthogonally at the points $\theta(-r)/s\geq \theta(-r')$ and $\theta(r)/s\leq \theta(r')$. 
\end{proof}

We can now prove Theorem \ref{s3decay}. For this we assume $|f(z)|\leq 1$ on the unit disk. All the constants can be effectively computed. We will write $c$ for various positive constants. Recall that $d\ge 2$ and $H\ge e^e$.  By Lemma \ref{diskdecaynottoolarge} the $\alpha$ we wish to count satisfy $|\alpha| \leq 1-1/2l_{a,b}d\log H$. So we can apply Lemma \ref{diskpolynomial} to find an absolute constant $c_1>0$ such that there is a nonzero polynomial $P(X,Y)$ of degree at most 
\[
T=c_1l_{a,b}^2d^4(\log H)^3
\] 
with integer coefficients satisfying $|P|\le 2(T+1)^2H^T$ (where $|P|$ is the maximum modulus of the coefficients of $P$) and such that $P(\alpha,f(\alpha))=0$ for all the $\alpha$'s we're counting. Fix such a $P$.

Note that for $x>1$ we have $\phi(\theta^{-1}(x))= \frac{1}{2}(x+1)$ and so for these $x$ we have
\begin{equation}\label{fofthetasmall}
|f(\theta^{-1}(x))|\le \frac{a}{b^{\frac{1}{2}(x+1)}}.
\end{equation}
Let $R(X)=P(X,0)$ and $Q=P-R$. We can assume that $R$ is nonzero since if $R$ is the zero polynomial then $P$ is divisible by $Y$ and we can divide by $Y$ until the corresponding $R$ is non-zero, and this doesn't affect us since we're only counting $\alpha$'s for which $f(\alpha)\ne 0$. Note that $Q$ is divisible by $Y$. We now seek a point $x_0$ such that $|P(\theta^{-1}(x_0),f(\theta^{-1}(x_0)))|$ is not too small (and is, in particular, non-zero). We consider $Q$ and $R$ separately, and first find a large interval on which $|Q(\theta^{-1}(x),f(\theta^{-1}(x)))|$ is very small. More precisely, we want an interval of length at least $T+1$ on which
\begin{equation}\label{Qsmall}
|Q(\theta^{-1}(x),f(\theta^{-1}(x)))|\le \frac{1}{2(x+1)^T}.
\end{equation}
Since $Y$ divides $Q$ there is a polynomial $Q'$ of degree at most $T$ and with $|Q'|\le |Q|$ such that $Q=YQ'$. We have
\begin{eqnarray*}
|Q'(\theta^{-1}(x),f(\theta^{-1}(x)))| &\le &\sum |Q'||\theta^{-1}(x)|^i|f(\theta^{-1}(x))|^j \\
&\le &(T+1)^2|Q|
\end{eqnarray*}
since $|\theta^{-1}(x)|$ and $|f(\theta^{-1}(x))|$ are both at most $1$. Since $Q=YQ'$ we find that
\begin{eqnarray*}
|Q(\theta^{-1}(x),f(\theta^{-1}(x)))| &\le & |f(\theta^{-1}(x))| (T+1)^2|Q|\\
& \le & \frac{a}{b^{\frac{1}{2}(x+1)}} (T+1)^2 |Q|
\end{eqnarray*}
where we have used \eqref{fofthetasmall} for the second inequality. We know that $|Q|\le |P| \le 2(T+1)^2 H^T$ and so
$$
\log |Q| \le c T \log H.
$$
Combining the last two estimates shows that for \eqref{Qsmall} it suffices to have
$$
\log 2 +T\log (x+1) +\log a +2\log (T+1)+cT \log H \le \frac{1}{2}(\log b )(x+1).
$$
For this it is sufficient to ensure that
$$
l_{a,b}^{-1}(x+1) \ge \max \{ cT \log (x+1), cT\log H\}.
$$
So we can choose $x$ around $c l_{a,b}T\log T\log H/\log \log H $ 
 and so we can take
$$
[cl_{a,b}T \log T\log H/\log \log H,2c l_{a,b}T\log T\log H/\log \log H]
$$
as our interval.

We now consider $R$. The function
$$
(X+1)^{\lfloor T\rfloor}R(\theta^{-1}(X))
$$
is a polynomial in $X$ of degree at most $T$ with integer coefficients (we write $\lfloor T\rfloor$ for the integer part of $T$). Since our interval has length greater than $T+1$ there is an integer $m$ inside it such that $(m+1)^{\lfloor T\rfloor}R(\theta^{-1}(m))$ is not zero. As $R$ has integer coefficients this number is also an integer and we have
$$
|R(\theta^{-1}(m))| \ge \frac{1}{(m+1)^T}.
$$
But $m$ is in the interval on which \eqref{Qsmall} holds and so we find that
\begin{equation}\label{propertyofm}
|P(\theta^{-1}(m),f(\theta^{-1}(m)))| \ge \frac{1}{2(m+1)^T}.
\end{equation}
Now suppose that $\alpha $ in $S$ is algebraic and such that $f(\alpha)$ is also algebraic and non-zero, and $[\Q(\alpha,f(\alpha)):\Q]\le d$ and $H(\alpha),H(f(\alpha))\le H$. By Lemma \ref{diskdecaynottoolarge} we have
\begin{equation}\label{upperboundonq}
|\alpha|\le 1-\frac{
1}{2l_{a,b}d\log H}.
\end{equation}

We define the function 
$$g(z)=P(\theta^{-1}(m\theta (z)),f(\theta^{-1}(m\theta (z)))).
$$
Then $\theta^{-1}(\theta(\alpha)/m)$ is a zero of $g$ and we want to bound the absolute value of $\theta^{-1}(\theta(\alpha)/m)$. To this end we find $0\leq r<1$ such that $r\ge |\alpha|$ and $\theta(-r)\leq \theta(-|\alpha|)/m$. By (\ref{upperboundonq}) and the fact that $$m\leq 2cl_{a,b}T\log T\log H/\log \log H$$ it is sufficient to find an $r$ such that $$1-r\leq c\log \log H/(l_{a,b}^2dT\log T(\log H)^2).$$ So we pick 
\[
r=1-\frac{c \log \log H}{l_{a,b}^2dT\log T(\log H)^2}
\] 
and, by Lemma \ref{moebius}, $|\theta^{-1}(\theta(\alpha)/m)|\leq r $. So it suffices to count the zeros of $g$ in a disk of radius $r$. Then let 

\[
R=1-\frac{1-r}{2}.
\]


Now we can use some standard estimates to deduce that
  \[
 R/r \ge \exp\left(\frac{c\log \log H}{l_{a,b}^2dT\log T(\log H)^2}\right).
 \]
For example as $\exp(t)\le 1+2t$ for $0\leq t\leq 1$ we deduce that 
\begin{eqnarray*}
R/r =& 1+\frac{1-r}{2r} \\
\geq & 1+ 2\frac{1-r}4\\
\geq &\exp(\frac{1-r}4)\\
=& \exp\left(\dfrac{c\log \log H}{l_{a,b}^2dT\log T(\log H)^2}\right).
\end{eqnarray*} 
Thus 
\begin{align*}
1/\log(R/r)\leq cl_{a,b}^2dT\log T(\log H)^2/\log \log H.
\end{align*}

 We also bound $-\log|g(0)|$. From (\ref{propertyofm}) it follows that 
\begin{align*}
-\log |g(0)| \leq c\log l_{a,b}T\log T.
\end{align*}
Finally we bound $M=\max\{|g(z)|:|z|\leq R\}$. Recall that $|P|\le 2(T+1)^2H^T$ and that, by assumption, $|f(z)|\le 1$ for all $z$ in the disk. So
\begin{eqnarray*}
\log M \le \log |P| +2\log(T+1) \le c T\log H.
\end{eqnarray*}
That implies 
\begin{align*}
\log M - \log |g(0)|\leq c \log l_{a,b} T\log T\log H/\log\log H.
\end{align*}
Now with Proposition \ref{jenson} we deduce that number $N$ of $\alpha$'s that we're counting satisfies
 \begin{align*}
 N\leq c l_{a,b}^2\log l_{a,b} d(T\log T)^2(\log H)^3/(\log \log H)^2
 \end{align*}
 and so 
\begin{align*}
N \leq cl_{a,b}^6(\log l_{a,b})^3d^9(\log d)^2(\log H)^9.
\end{align*}

%
%


In an application that we give later, we only need count points $(\alpha,f(\alpha))$ for $\alpha$ in a compact subset of the disk. In this case, we can improve the bound. 

To begin, we need only take $T=cd^2\log H$, where $c$ now depends on the compact subset. We proceed as above, but take $$r=1-\frac{c \log \log H}{l_{a,b}T \log T\log H},$$ with $c$ again depending on the compact set. We then find a final bound of
$$
cl_{a,b}\log l_{a,b}d^4(\log d)^2 \left( \log H\right)^4.
$$

\section{Functions with growth}\label{growth_section}
We now prove Theorem \ref{growth_thm}. Large parts of the proof are very similar to the previous proof and we leave these parts for the reader. Suppose that $f$ is analytic on the unit disk and that there are positive real numbers $a,c_0$ and $b>1$ such that \eqref{diskorder1} holds for $|z|\geq 1/2$ and such that for $x\in S$
\[
|f(x)| \ge a b^{\phi(x)}.
\]

As before, $S$ is a subset of the unit disk containing the interval $(0,1)$ and $l_{a,b}=\frac{\log a}{\log b}$. We assume $a\ge \max\{b^e,e\}$. (In the case where this extra assumption is not satisfied, one could obtain the conclusion of Theorem \ref{growth_thm} by applying the following result to a function obtained from $f$ by scaling.) 

\begin{thm}\label{s4growth}
Let $d\ge 2$ and $H\ge e^e$. There is a positive constant $c$ effectively computable from $c_0$ such that the number of algebraic $\alpha\in S$ such that $f(\alpha)$ is algebraic with $[\mathbb{Q}(\alpha,f(\alpha)):\mathbb{Q}]\leq d$ and $H(\alpha),H(f(\alpha))\leq H$ is at most 
\begin{align*}
cl^{17}_{a,b}(\log l_{a,b})^9d^{18}(\log d)^9(\log H)^{17}(\log\log H)^6
\end{align*} 

\end{thm}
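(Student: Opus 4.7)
The argument parallels the proof of Theorem~\ref{s3decay}, with the roles of the constant and leading coefficients (in $Y$) of the auxiliary polynomial interchanged. As a first step, the lower bound $|f(\alpha)|\ge ab^{\phi(\alpha)}$ on $S$, combined with the elementary estimate $|f(\alpha)|\le H^d$ (valid for $f(\alpha)$ algebraic of degree at most $d$ and height at most $H$), yields a bound of the form $|\alpha|\le 1-c/(l_{a,b}d\log H)$, playing the role of Lemma~\ref{diskdecaynottoolarge}. One then applies Lemma~\ref{diskpolynomial} in its general form (since $|f|$ is now only known to satisfy $|f(z)|\le\phi(z)^{c_0\phi(z)}$) to produce a nonzero polynomial $P(X,Y)$ of degree at most $T=c_1 l_{a,b}^3\log l_{a,b}\,d^4\log d\,(\log H)^3\log\log H$, with integer coefficients of modulus at most $2(T+1)^2H^T$, vanishing at each $(\alpha,f(\alpha))$ we wish to count.

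The main change from the decay case is in producing a point at which $|P(\cdot,f(\cdot))|$ is not too small. Write $P(X,Y)=S(X)Y^{T_Y}+Q_{\mathrm{low}}(X,Y)$, where $T_Y$ is the $Y$-degree of $P$ and $S(X)=p_{T_Y}(X)\not\equiv 0$; the case $T_Y=0$ is trivial, as it forces $\alpha$ to be a root of $S$ and so leaves at most $T$ values. For $x_m=\theta^{-1}(m)=(m-1)/(m+1)$ with positive integer $m$, the quantity $(m+1)^{\deg S}S(x_m)$ is an integer polynomial in $m$ of degree at most $T$, hence nonzero at some $m$ in any window of integers of length exceeding $T+1$; at such $m$ one has $|S(x_m)|\ge(m+1)^{-T}$. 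Estimating $|Q_{\mathrm{low}}(x_m,f(x_m))|\le T(T+1)|P||f(x_m)|^{T_Y-1}$ and using $|f(x_m)|\ge ab^{(m+1)/2}$, the leading term dominates the remainder provided $m$ is chosen in an interval of the form $[Cl_{a,b}T(\log H+\log T),2Cl_{a,b}T(\log H+\log T)]$ for a sufficiently large absolute constant $C$. This window has length exceeding $T+1$, so both conditions can be enforced simultaneously, yielding
\[
|P(x_m,f(x_m))|\ge\tfrac{1}{2}|S(x_m)||f(x_m)|^{T_Y}\ge \frac{|f(x_m)|^{T_Y}}{2(m+1)^T}.
\]

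With this lower bound in hand, the rest of the proof follows the template of the decay argument. Setting $w(z)=\theta^{-1}(m\theta(z))$ and $g(z)=P(w(z),f(w(z)))$, Lemma~\ref{moebius} allows one to choose $r=1-c/(l_{a,b}dm\log H)$ so that each zero $\theta^{-1}(\theta(\alpha)/m)$ of $g$ corresponding to a relevant $\alpha$ lies in $\{|z|\le r\}$, and one then takes $R=1-(1-r)/2$. Proposition~\ref{jenson} then bounds the number of such zeros by $(\log M-\log|g(0)|)/\log(R/r)$.

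The principal new technical obstacle is the upper bound $M$ on $|g|$ over $\{|z|\le R\}$, since $|f|$ is no longer bounded on the whole disk. Using the Moebius identity $1-|\theta^{-1}(u+iv)|^2=4u/((u+1)^2+v^2)$ together with $\mathrm{Re}\,\theta(z)\ge(1-R)/(1+R)$ for $|z|\le R$, one obtains the estimate $\phi(w(z))\lesssim m/(1-R)$, and then $|f(w(z))|\le\phi(w(z))^{c_0\phi(w(z))}$ gives $\log M\le cT\log H+c_0Tm(1-R)^{-1}\log(m/(1-R))$. Combining this with $\log(R/r)\asymp 1/(l_{a,b}dm\log H)$, and then substituting $m\asymp l_{a,b}T\log H$ and the formula for $T$ above, yields a polynomial bound of the claimed shape after careful bookkeeping. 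I expect the main difficulty to lie in this final tracking of the logarithmic factors $\log l_{a,b}$, $\log d$ and $\log\log H$, which enter through both $\log T$ and the logarithm appearing in the growth bound on $|f|$.
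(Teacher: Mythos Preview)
Your proof is correct and follows essentially the same route as the paper. The one cosmetic difference is in how the lower bound on $|P(\theta^{-1}(m),f(\theta^{-1}(m)))|$ is obtained: where you split $P$ into its top $Y^{T_Y}$-term $S(X)Y^{T_Y}$ and a lower-order remainder and show directly that the former dominates (using $|f|\ge ab^{\phi}$), the paper instead passes to the ``reciprocal'' polynomial $P'(X,Y)=\sum_{i=0}^{L}P_{L-i}(X)Y^i$ and the identity
\[
P(\theta^{-1}(x),f(\theta^{-1}(x)))=f(\theta^{-1}(x))^{L}\,P'\!\left(\theta^{-1}(x),\,\tfrac{1}{f(\theta^{-1}(x))}\right),
\]
which converts growth of $f$ into decay of $1/f$ and lets the Section~\ref{decay_section} argument be reused verbatim. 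Your $S(X)$ is precisely the paper's $R(X)=P'(X,0)=P_L(X)$, so the two arguments are interchangeable; the remaining estimates (choice of $r$, $R$, bound on $M$ via $\phi(w(z))\lesssim m/(1-R)$, and Proposition~\ref{jenson}) match the paper's.
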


First we record the analogue in this setting of Lemma \ref{diskdecaynottoolarge}. The proof is very similar and is left for the reader. 

\begin{lemma}\label{diskgrowthnottoolarge} Let $\alpha \in S$ be algebraic and such that $f(\alpha)$ is algebraic, $[\Q(\alpha,f(\alpha)):\Q]\leq d$ and $H(\alpha), H(f(\alpha))\leq H$.

Then \[|\alpha|\leq 1-\frac{1}{2l_{a,b}d\log H}.\]
\end{lemma}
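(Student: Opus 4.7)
The plan is to mirror the proof of Lemma \ref{diskdecaynottoolarge}, but to apply the standard inequality between the absolute value and the multiplicative height of an algebraic number in the opposite direction. In the decay setting, one uses the lower bound $|\beta| \ge H(\beta)^{-d}$ for nonzero $\beta$ of degree at most $d$; here, the natural tool is the matching upper bound $\max\{1,|\beta|\} \le H(\beta)^d$, which follows directly from the definition of $H$ via the Mahler measure.

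Since $f(\alpha)$ is algebraic with $[\Q(f(\alpha)):\Q] \le d$ and $H(f(\alpha)) \le H$, this gives
\[
|f(\alpha)| \le H^d.
\]
On the other hand, because $\alpha \in S$, the hypothesized growth of $f$ yields $|f(\alpha)| \ge ab^{\phi(\alpha)}$. Combining the two bounds and taking logarithms produces
\[
\log a + \phi(\alpha)\log b \le d\log H,
\]
so $\phi(\alpha) \le (d\log H - \log a)/\log b \le d\log H/\log b$.

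Finally, the running assumption $a \ge e$ gives $\log a \ge 1$, hence $1/\log b \le \log a/\log b = l_{a,b}$. Therefore $\phi(\alpha) \le l_{a,b}\,d\log H \le 2l_{a,b}\,d\log H$, and rearranging $\phi(\alpha) = 1/(1-|\alpha|)$ yields the claimed bound $|\alpha| \le 1 - 1/(2l_{a,b}d\log H)$. No real obstacle is anticipated: the argument is genuinely dual to the decay case, and the hypothesis $f(\alpha)\neq 0$ that was crucial there becomes unnecessary here, since the upper bound $|f(\alpha)|\le H^d$ holds trivially whether or not $f(\alpha)$ vanishes (and in any case $f$ is nonvanishing on $S$ by the growth assumption).
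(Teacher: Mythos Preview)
Your proof is correct and is precisely the dual of the proof of Lemma~\ref{diskdecaynottoolarge} that the paper has in mind; the paper itself merely states that the argument is ``very similar and is left for the reader,'' and your version is exactly the expected filling-in, including the observation that the nonvanishing hypothesis on $f(\alpha)$ is no longer needed.
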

We can now start the proof of Theorem \ref{s4growth}. We write $c$ for various positive constants which are effectively computable from $c_0$. Applying Lemma \ref{diskpolynomial} gives us a nonzero polynomial $P$ of degree at most $$T=cl_{a,b}^3 \log l_{a,b}d^4\log d(\log H)^3\log\log H$$ with integer coefficients and with $|P|\le 2(T+1)^2H^T$ such that $$P(\alpha,f(\alpha))=0$$ for the $\alpha$'s we're interested in. Let $\theta$ and $\theta^{-1}$ be as in the previous proof. For real $x>1$ we have $\phi(\theta^{-1}(x))=\frac{1}{2}(x+1)$ and so
\begin{equation}\label{lowerboundfofthetainverse}
|f(\theta^{-1}(x))| \geq ab^{\frac{1}{2}(x+1)}.
\end{equation}
Let $L\le T$ be the degree of $P$ in $Y$, and write
\[
P(X,Y)= \sum_{i=0}^L P_i(X)Y^i
\]
where $P_0,\ldots,P_L$ are polynomials in $X$ with integer coefficients satisfying $|P_i|\le |P|$. Let
\[
P'(X,Y) = \sum_{i=0}^L P_{L-i}(X)Y^i.
\]
By \eqref{lowerboundfofthetainverse} $f(\theta^{-1}(x))$ is nonzero for $x>1$ and so
\begin{equation}\label{GROWTHpintermsofoneoverf}
P(\theta^{-1}(x),f(\theta^{-1}(x)))= f(\theta^{-1}(x))^L   P'\left(\theta^{-1}(x), \frac{1}{f(\theta^{-1}(x))}\right).
\end{equation}
We will use this to find an $x$ which isn't too large and is such that $P(\theta^{-1}(x),f(\theta^{-1}(x)))$ isn't too small, much as we did in the previous proof. Let $R(X)= P'(X,0) (=P_L(X))$ and $Q(X,Y)=P'(X,Y)-R(X)$. Note that $R(X)$ is not the zero polynomial, by the definition of $L$. For now, suppose that $Q$ is also not the zero polynomial. Since $Y$ divides $Q$ we can argue exactly as we did after \eqref{Qsmall} to show that
\begin{equation}\label{Qsmallingrowthcase}
\left| Q\left(\theta^{-1}(x),\frac{1}{f(\theta^{-1}(x))}\right)\right| \le \frac{1}{2(x+1)^T}
\end{equation}
for $x$ in the interval
\[
[cl_{a,b}T\log T\log H/\log \log H,2cl_{a,b}T\log T\log H/\log \log H].
\]

We can then proceed as before to find an $m$ in this interval such that
\[
|R(\theta^{-1}(m))| \ge \frac{1}{(m+1)^T}
\]
so that
\[
\left|P'\left(\theta^{-1}(m), \frac{1}{f(\theta^{-1}(m))}\right)\right| \ge \frac{1}{2(m+1)^T}.
\]
Then by \eqref{GROWTHpintermsofoneoverf} we also have
\[
|P(\theta^{-1}(m),f(\theta^{-1}(m)))|\ge \frac{1}{2(m+1)^T}.
\]
In fact, $|P(\theta^{-1}(m),f(\theta^{-1}(m)))|$ is a bit larger than this, but we don't seem to be able to exploit this. Anyway, we can now proceed as in the previous proof. We see that it suffices to count the zeros of
\[
g(z)= P(\theta^{-1}( m\theta(z)), f(\theta^{-1}( m\theta(z))))
\]
in a disk of radius 
\[
r= 1-\frac{c\log \log H}{l^2_{a,b}dT\log T(\log H)^2}.
\] 
We estimate this as before, except that now the bound on $M$ is larger as our $f$ is now growing. Let 
\[
R=1-\frac{1-r}{2}=1-\frac{c\log \log H}{2 l^2_{a,b}dT\log T(\log H)^2}.
\]
Using reasoning from the previous section we get
\[
\frac{1}{\log(R/r)}\leq cl^2_{a,b}dT\log T (\log H)^2/\log \log H.
\]

Before estimating $M= \max\{ |g(z)|:|z|\le R\}$ we first estimate $M'= \max \{ |f(\theta^{-1}( m\theta(z)))| : |z|\le R\}$. For $|z|\le R$ we have
\[
\phi(\theta^{-1}( m\theta(z))) \le \frac{1}{2}+\frac{m(1+R)}{2(1-R)}\le c l^3_{a,b}d T^2(\log T)^2(\log H)^3/(\log \log H)^2
\]
and so by \eqref{diskorder1} we have
\[
M' \le (l^3_{a,b}d T^2(\log T)^2(\log H)^3/(\log \log H)^2)^{c l^3_{a,b}d T^2(\log T)^2(\log H)^3/(\log \log H)^2}
\]
and as
\[
M\le (T+1)^2 |P| M'^T
\]
we have
\[
\log M \le c l^3_{a,b}\log{l_{a,b}}d\log{d}T^3(\log T)^3(\log H)^3/\log\log H
\]
After estimating $-\log|g(0)|$ as in the previous proof, we use Proposition \ref{jenson} to get a final bound of 
\[
cl_{a,b}^5\log l_{a,b} d^2\log dT^4(\log T)^4 (\log H)^5/(\log \log H)^2
\]
\[
\le cl^{17}_{a,b}(\log l_{a,b})^9d^{18}(\log d)^9(\log H)^{17}(\log\log H)^6
\]

But what if $Q$ is the zero polynomial? Then our original $P$ has the form $P(X,Y) = P_L(X)Y^L$. The first factor has at most $T$ zeros since it is a polynomial of degree at most $T$. The second factor is non-zero at all the points we care about. So we end up with a much better bound in this case. This completes the proof.

\section{Dynamics over $\C$} \label{dynamics_real}

In this section we work in $\C$  and assume that we have fixed an embedding of $\overline{\Q}$ in $\C$. We consider the dynamical system associated to a polynomial $P \in K[X]$ of degree $D \geq 2$ where $K$ is a number field. In what follows we may and will assume that $P$ is monic. This is because we can make $P$ monic by passing to a conjugate $\gamma^{-1} P(\gamma X)$ for $\gamma \neq 0$  lying in an extension of degree at most $D-1$ over a field of definition for $P$. The filled Julia set of $P$ is the set of $z \in \mathbb{C}$ such that $P^{\circ n}(z) \nrightarrow \infty $. 
For each such polynomial $P$ (in fact over $\C$) there exists a neighbourhood $U_\infty$ of $\infty$ contained in the complement of the filled Julia set and a bi-holomophism 
\begin{align*}
\Phi_P:U_\infty \simeq D(0,r)
\end{align*} 
for some $r > 0$, where $D(0,r)$ is an open disk of radius $r$ centred at 0, that satisfies 
\begin{align*}
z\in U_\infty\text{ implies } P(z)\in U_\infty,
\end{align*}
\begin{align*}
\Phi_P(\infty)=0
\end{align*}
\begin{align*}
\text{and }\Phi_P(P(z)) = \Phi_P(z)^D.
\end{align*} 
This is a theorem of B\"ottcher \cite[Theorem 6.7]{Milnor}. We will assume that $0$ is not contained in $U_\infty$. For example if $P(z) = z^D$ then $\Phi_{P} = 1/z$ and we can pick $U_\infty = \{z: |z| > 1\}$. In fact if all critical points of $P$ lie in its filled Julia set then we can always take $U_\infty$ to be the complement of the filled Julia  set of $P$ and $r=1$. In general we can't continue   $\Phi_P$ as a holomorphic function to the whole complement of the filled Julia set \cite[p.6-7]{Milnor}. \\

In what follows we pick $U_\infty$ such that $\Phi_P^{-1}$ can be extended to $D(0,r^{1/D})$ and such that $r \leq e^{-\pi/6}$. This choice can be made effectively and makes the computations more transparent. For any $\alpha \in U_\infty$  we can now define the function 
\begin{align*}
f^*(\tau) =\frac{1}{\Phi_P^{-1}(\exp(2\pi i (\tau - i/24) )\Phi_P(\alpha))}
\end{align*}   
on $\{\tau: \Im(\tau) >0\}$. 
This $f^*$ has the property that 
\begin{align}\label{image}
1/f^*(\{k/D^n +  i/{24}: k = \lfloor -\frac{D^n-1}2\rfloor , \dots,\lfloor \frac{D^n-1}2\rfloor  \}) = S_{\alpha ,n},
\end{align}
with $S_{\alpha, n}$ as defined on page 2, and further that 
%
\begin{align}\label{growth}
0<|f^*(\tau)| <c\exp(-2\pi \Im(\tau))
\end{align}
for some effective positive $c$ depending only on $P$.  We pull back $f^*$ to the unit disk via $\mu(z) =  i(1+z)/(1-z)$ and 
by (\ref{growth}) this pullback $f = f^*\circ \mu$ and the set 
\begin{align*}
S = \mu^{-1}\left( \{\tau: \Im(\tau) \geq 1/24, |\Re(\tau) | \leq \frac12 \} \right)
\end{align*}
satisfy the conditions of Theorem \ref{decay}  (with effectively computable $a,b$). 
Recall that to each polynomial (or even just rational) map $P$ we can associate a canonical dynamical height $\hat{h}_P:\overline{\Q} \rightarrow [0,\infty)$ defined by
\begin{align*}
\hat{h}_P(z) =\lim_{n\rightarrow \infty}\frac{h(P^{\circ n}(z))}{D^n} 
\end{align*}
with the property $\hat{h}_P(P(z)) = D\hat{h}_P(z)$ and
\begin{align*}
|\hat{h}_{P}(z)- h(z)|\leq c_{P,h}
\end{align*} 
for a constant $c_{P,h}$ depending (effectively) only on $P$ (in particular $\hat{h}_P \neq 0 $)  \cite[Theorem 3.20]{Silverman}.
\begin{proof}[Proof of Theorems \ref{lowerbounddynamics} and \ref{proportiondynamics}]
Since $\hat{h}_P(\beta) =\hat{h}_P(\alpha) $ for $\beta \in S_{\alpha, n}$ we deduce from the above  that  \[H(\mu^{-1}( k/D^n + i/24)), H(f(\mu^{-1}(k/D^n + i/24)))\leq cH(\alpha)D^{2n}\]  for $k  \in \{\lfloor -\frac{D^n-1}2\rfloor , \dots,\lfloor \frac{D^n-1}2\rfloor\}$ and from Theorem \ref{decay} that 
\begin{align*}
D^n \leq c(1+h(\alpha))^4n^4(\log D)^4(1+\log d)^2d^4
\end{align*}
where $d$ is a bound for the degree of $f(k/D^N + i/24)$ over $\Q$ and $c$ is effective. Theorem \ref{lowerbounddynamics} now follows from (\ref{image}). \\

Now pick a set $S_\delta \subset \{k/D^N+i/24: k=\lfloor -\frac{D^n-1}2\rfloor , \dots,\lfloor \frac{D^n-1}2\rfloor\}$ such that $f(\mu^{-1}(\tau))$ has degree at most $D^{\delta n}$ for $\tau \in S_\delta $. Plugging this bound in Theorem \ref{decay} for our $f$ we deduce Theorem \ref{proportiondynamics}. 
\end{proof}
Now we want to investigate how the involved constants behave if we vary $P$ in a family of polynomials. 

Let $V$  be a quasi-affine variety over a number field $K$ with coordinate functions $T= (T_1, \dots, T_m)$ on $\mathbb{A}^m$ and let  $P \in \mathcal{K}[X]$, where $\mathcal{K} = K(T)$, of degree $\deg(P)=D \geq 2$. (Here we work with $X$ instead of $z$ to distinguish it from the complex variable.)  For each $\cc \in V(\overline{\Q})$ such that the coefficients of $P$ are defined at $\cc$ we can specialize to $P_\cc \in \overline{\Q}[X]$ and associate the canonical dynamical height $\hat h_\cc =\hat{h}_{P_{\cc}}$ to $P_\cc$. We define the Weil-height on  $V$ in the usual way
\begin{align*}
 [K:\Q]h_V(\cc) =\sum_{v\in M_K}\log \max\{1, |t_1|^{n_v}_v,\dots, |t_m|^{n_v}_v\}
\end{align*}
for $(t_1, \dots, t_m) =T(\cc)$ where $K$ is a number field containing $t_1,\dots, t_m$, $M_K$ is the set of places of $K$, suitably normalized such that $|p|_v = p^{-1}$ for a prime $p$ satisfying $v|p$, and $n_v$ is the local degree $[K_v:\Q_v]$. 
We shrink $V$ if necessary such that $P_\cc$ is always defined and such that $\deg P _\cc = \deg P $ for all $\cc \in V(\overline\Q) $. 
\begin{lemma}\label{uniform123} For all $\cc \in V(\overline{\Q})$   
\begin{align*}
|\hat{h}_\cc-h|\leq \delta_1h_V(\cc) +\delta_2
\end{align*}
for $\delta_1, \delta_2$  effectively computable positive real constants depending only on $P$. 
\end{lemma}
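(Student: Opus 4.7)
The plan is to combine the classical dynamical telescoping estimate with an effective linear bound on how the heights of the coefficients of $P_\cc$ depend on $h_V(\cc)$. Writing $P(X) = \sum_{i=0}^D a_i(T) X^i$ with $a_i \in \mathcal{K}$, the hypothesis that $P_\cc$ is defined and of degree $D$ for every $\cc \in V(\overline\Q)$ means (after the shrinking of $V$ already performed) that each $a_i$ is a regular function on $V$ and $a_D$ does not vanish on $V$. A routine computation with Weil heights yields a constant $c_0$ depending only on $D$ such that
\[
|h(P_\cc(\alpha)) - D h(\alpha)| \leq \eta(\cc) + c_0
\]
for every $\alpha \in \overline\Q$ and every $\cc \in V(\overline\Q)$, where $\eta(\cc) := \max_i h(a_i(\cc))$.

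Applying this bound to each iterate $P_\cc^{\circ n}(\alpha)$ and summing the geometric series that appears when one writes $\hat h_\cc(\alpha) - h(\alpha)$ as the telescoping identity
\[
\hat h_\cc(\alpha) - h(\alpha) = \sum_{n \geq 0} D^{-n-1} \bigl( h(P_\cc^{\circ (n+1)}(\alpha)) - D h(P_\cc^{\circ n}(\alpha)) \bigr)
\]
yields
\[
|\hat h_\cc(\alpha) - h(\alpha)| \leq \frac{\eta(\cc) + c_0}{D - 1}.
\]
It remains to bound $\eta(\cc)$ linearly in $h_V(\cc)$. For each $i$, I would use the standard functoriality of the Weil height: writing $a_i = p_i/q_i$ with $p_i, q_i \in K[T_1,\ldots,T_m]$ and $q_i$ nowhere vanishing on $V$, and applying the elementary inequalities $h(xy) \leq h(x) + h(y)$ and $h(x + y) \leq h(x) + h(y) + \log 2$ a number of times controlled by the degrees of $p_i$ and $q_i$, one obtains effective constants $\kappa_i, \kappa_i'$ depending only on $P$ with $h(a_i(\cc)) \leq \kappa_i h_V(\cc) + \kappa_i'$ for all $\cc \in V(\overline\Q)$. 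Maximising over $i$ gives
\[
\eta(\cc) \leq \kappa h_V(\cc) + \kappa'
\]
with $\kappa, \kappa'$ effectively computable in terms of $P$ alone.

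Combining the two displays establishes the lemma with $\delta_1 = \kappa/(D-1)$ and $\delta_2 = (\kappa' + c_0)/(D-1)$. The main obstacle is the second step, the effective linear comparison between $h(a_i(\cc))$ and $h_V(\cc)$: while the qualitative statement is classical, to keep the dependence on $P$ explicit one must fix once and for all a presentation of each $a_i$ as a quotient of polynomials whose denominator is invertible on $V$ and read the constants $\kappa_i, \kappa_i'$ directly off that presentation, shrinking $V$ further if necessary to clear denominators. The dynamical telescoping step itself is entirely standard and the constants involved there depend only on $D$.
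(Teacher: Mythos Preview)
Your approach is essentially the paper's: both prove a one-step estimate $|h(P_\cc(x))-Dh(x)|\le(\text{linear in }h_V(\cc))$ and then telescope. The paper obtains the one-step bound directly by an explicit resultant computation over $\mathcal{O}_K[T_1,\dots,T_m]$ (writing $A_0A+bX^DB_0=R$ and $A_\infty A+bX^DB_\infty=RX^{2D-1}$ with $R\in\mathcal{O}_K[T]$, then using the product formula to cancel $R(\cc)$), arriving at $|h(P_\cc(x))-Dh(x)|\le D_V(2D-1)h_V(\cc)+c$ with the explicit $\delta_1=D_V(2D-1)/(D-1)$; you instead factor through $\eta(\cc)=\max_i h(a_i(\cc))$ and then bound $\eta(\cc)$ linearly in $h_V(\cc)$. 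This is a legitimate reorganisation of the same idea.

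One quantitative slip: the inequality $|h(P_\cc(\alpha))-Dh(\alpha)|\le \eta(\cc)+c_0$ with $c_0=c_0(D)$ is not correct as written. Already the easy upper bound gives $h(P_\cc(\alpha))\le Dh(\alpha)+h([a_0(\cc):\cdots:a_D(\cc):1])+\log(D+1)$, and the projective height of the coefficient vector can be as large as $\sum_i h(a_i(\cc))\sim(D+1)\eta(\cc)$; the lower bound (which, as in the paper, ultimately rests on a resultant/Nullstellensatz identity) likewise contributes a $D$-dependent multiple of $\eta(\cc)$. The fix is painless: replace $\eta(\cc)+c_0$ by $c_1(D)\,\eta(\cc)+c_0(D)$ and adjust $\delta_1,\delta_2$ accordingly; the rest of your argument is unaffected.
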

\begin{proof} We set $bP =a_0X^D+\dots + a_D$ with
$ a_0, \dots, a_D, b \in \mathcal{O}_K[T_1,\dots, T_m]$ where $\mathcal{O}_K$ is the ring of integers of $K$, $ a_0, \dots, a_D, b$ are co-prime and we set $D_V$ to be the maximum of the degrees of $b,a_i, i=0, \dots, D$.  It is more convenient to work with $\tilde{P} = P(1/X) = A/ (bX^D)$ with $A \in \mathcal{O}_K[T,X]$. Note that $(A,b X^D)=1$ and that this holds for every specialization $P_\cc$ that we consider.  By the theory of resultants there exist polynomials $A_0,B_0, A_\infty, B_\infty \in \mathcal{O}_K[T,X]$ of degree at most $D-1$ in $X$ and at most $D_v(2D-1)$ in $T_1,...,T_m$ such that 
\begin{align*}
A_0A +bX^DB_0 =R, ~~ A_\infty A+bX^DB_\infty = RX^{2D-1} 
\end{align*}
for $R\in  \mathcal{O}_K[T_1,\dots, T_m]$. Now we can follow the proof in \cite[section 5.1] {HabeggerJonesMasser}  but we repeat some of the arguments for the reader's convenience. Let $(t_1,\dots, t_m) = T(\cc)$ be the coordinates of the specialized point, $r=R(\cc)$ and $x$ be an algebraic specialization of $X$. At a  non-archimedian place $|\cdot|$ we have that $|r|$ and $|rx^{2D-1}|$ and therefore also $|r|\max\{1,|x|^{2D-1}\}$ are bounded above by 
\begin{align*}
\max\{1,|t_1|, \dots, |t_m|\}^{D_V(2D-1)}\max\{1,|x|\}^{D-1}\max\{|A(x,t_1,...,t_m)|,|bx^D|\}.
\end{align*} 

At an archimedean place we get $|r|\max\{1,|x|^{2D-1}\}$ bounded above by
\begin{align*}
L\max\{1,|t_1|, \dots, |t_m|\}^{D_V(2D-1)}\max\{1,|x|\}^{D-1}\max\{|A(x,t_1,...,t_m)|,|bx^D|\}
\end{align*} 

where $L$ is the sum of the lengths of $A_0,B_0,A_\infty, B_\infty \in \mathcal{O}_K[X,T_1,\dots, T_m]$.  Taking the product over all places the factor $r$ cancels out and we obtain 
\begin{align*}
Dh(x) \leq D_V(2D-1)h(\cc) + h(\tilde{P}_\cc(x)) + L.
\end{align*}
The inequality $h(\tilde{P}_\cc(x)) \leq D_Vh_V(\cc) + Dh(x) + O(1)$ with effective $O(1)$ follows from straightforward estimates. Since $h(1/x) = h(x)$ we obtain 
\begin{align*}
|h(P_\cc(x)) -Dh(x)| \leq D_V(2D-1)h_V(\cc) +c
\end{align*}
with $c$ effective and depending only on $P$. Now we can use the telescope summing trick to obtain 
\begin{align*}
|\hat{h}_{\cc} - h|\leq \delta_1h_V(\cc) + \delta_2
\end{align*}
where $\delta_1 =D_V(2D-1)/(D-1)$ and $\delta_2 = c/(D-1)$. 
\end{proof}

\begin{lemma}\label{Bottcher} Let $P \in \mathcal{O}[X]$ be monic and of degree $\deg P \geq 1$ where $\mathcal{O}$ is the ring of holomorphic functions  on a compact domain $B\subset \CC^n$. For each $\cc\in B$ there exists a B\"ottcher domain $U_\cc$ around infinity for the specialized $P_\cc \in \CC[X]$ and there is $r>0$ such that $D(\infty,r) = \{z: |z| >r \}\subset U_\cc$ for all $\cc \in B$.   Moreover the B\"ottcher map gives rise to a holomorphic map $\Phi(z,\cc)$ on $D(\infty,r)\times B$ such that 
\begin{align*}
\Phi(P(z,\cc),\cc) = \Phi(z,\cc)^D.
\end{align*}
\end{lemma}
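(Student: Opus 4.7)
The plan is to extend the classical construction of the B\"ottcher coordinate to the parametric setting by exploiting compactness of $B$ for uniform estimates. Write $P(X,\cc) = X^D + \sum_{i=0}^{D-1} a_i(\cc) X^i$ with the $a_i$ holomorphic, hence uniformly bounded by some $M$ on $B$. Choose $r>0$, depending only on $M$ and $D$, large enough that
$$
\left|\frac{P(z,\cc)}{z^D} - 1\right| \le \tfrac{1}{2} \quad \text{and} \quad |P(z,\cc)| \ge 2|z|
$$
hold for all $|z|>r$ and $\cc \in B$. An induction then yields super-exponential growth of $|P^{\circ n}(z,\cc)|$, uniformly on compact subsets of $D(\infty,r)\times B$, and keeps the forward orbit inside $\{|z|>r\}$.

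Next I define the B\"ottcher map by the telescoping series
$$
h(z,\cc) = \sum_{k=0}^\infty \frac{1}{D^{k+1}} \log \frac{P(P^{\circ k}(z,\cc),\cc)}{P^{\circ k}(z,\cc)^D},
$$
using the principal branch of the logarithm; this is legitimate since each argument lies in the disk of radius $\tfrac12$ about $1$. The uniform super-exponential growth bounds the $k$-th summand by $O(D^{-k}|P^{\circ k}(z,\cc)|^{-1})$, so the series converges locally uniformly on $D(\infty,r)\times B$ and $h$ is holomorphic there. Setting $\Phi(z,\cc) := z^{-1}e^{-h(z,\cc)}$ gives a holomorphic map on $D(\infty,r)\times B$ with $\Phi(z,\cc)\sim 1/z$ at infinity. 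An index shift in the series yields the identity $D\cdot h(z,\cc) - h(P(z,\cc),\cc) = \log(P(z,\cc)/z^D)$, which is exactly $\Phi(P(z,\cc),\cc) = \Phi(z,\cc)^D$. For each fixed $\cc$ the restriction $\Phi(\cdot,\cc)$ is the normalized B\"ottcher coordinate on $D(\infty,r)$, which by the single-variable theorem already recalled extends to a biholomorphism on the maximal B\"ottcher domain $U_\cc$.

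The main technical obstacle is the coherent choice of branches of logarithms as $\cc$ varies over $B$. This is settled once and for all by the uniform bound $|P(z,\cc)/z^D - 1| \le \tfrac12$: every logarithm argument then lies in the half-plane $\operatorname{Re}>\tfrac12$, where the principal branch is single-valued and jointly holomorphic in $(z,\cc)$, and the induction defining $P^{\circ n}(z,\cc)$ transports this coherent choice to every term of the series.
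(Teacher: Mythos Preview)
Your argument is correct and follows the same overall strategy as the paper: use compactness of $B$ to bound the coefficients of $P$ uniformly, deduce a uniform B\"ottcher neighbourhood $\{|z|>r\}$, and then show that the limit defining $\Phi$ converges locally uniformly in $(z,\cc)$, which gives joint holomorphy. The one genuine difference is in how the branch problem is handled. The paper (following Milnor) lifts to logarithmic coordinates $Z=\log z$, works with the sequence $L_k=F^{\circ k}(Z)/D^k$ for $F(Z)=\log P(e^Z)$, and then checks the periodicity $L(Z+2\pi i)=L(Z)+2\pi i$ so that $\Phi(z)=\exp(-L(\log z))$ is single-valued. You instead rewrite the same limit as the telescoping series $\sum D^{-k-1}\log\bigl(P(P^{\circ k}(z))/P^{\circ k}(z)^D\bigr)$ and observe that each argument lies in $\{|u-1|\le\tfrac12\}$, so the principal branch applies directly and no lift or descent is needed. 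The two are equivalent---your $z^{-1}e^{-h(z,\cc)}$ is exactly the paper's $\exp(-L(\log z))$---but your packaging makes the joint holomorphy in $(z,\cc)$ slightly more transparent, at the cost of the explicit asymptotic $\Phi(z)\sim 1/z$ being a computation rather than built into the construction. (Both arguments tacitly need $D\ge 2$; for $D=1$ neither limit converges, but the lemma is only applied in the paper with $D\ge 2$.)
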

\begin{proof}
For the proof we just have to follow the proof in \cite[Theorem 6.7]{Milnor} and we repeat some of the arguments. We can write 
\begin{align*}
P = z^D(1 + O(1/|z|))
\end{align*}
and since the coefficients of $P(z)$ are uniformly bounded on $B$, the  $ O(1/|z|) $  term is bounded uniformly for $|z| > r > 0$. We set $Z$ such that $z = \exp(Z)$ and set $F(Z) = \log P(\exp(Z))$. With the right lifting of $F$ we get $F(Z) = DZ + O(\exp(-\Re(Z))) $ and we can choose $\sigma$ so large (and $r$ so big)  such that for $\Re(Z) > \sigma$,   $|F(Z) - DZ| < 1$. Now the sequence of functions $L_k = F^{\circ k}  (Z)/D^k$ converges uniformly on this domain to a function $L$ that satisfies $L \circ F = DL$. Moreover it satisfies $L(Z + 2\pi i) = L(Z) + 2\pi i$. Thus the function $\Phi(z) = \exp(-L (\log z))$ is well defined on $D(\infty,r)$ and satisfies $\Phi(P)= \Phi^D$ (Note that $|P(z)| >r$ for $|z|>r$.)  Moreover as the convergence is uniform in $B$ the statement about holomorphy follows.  
\end{proof}

\begin{thm}\label{parameter} Let $V$ be a quasi-affine variety defined over a number field $K$ and let $P \in K(V)[X]$ be a monic polynomial of degree $D\geq 2$. Further let $B \subset V(\mathbb{C})$ be a  compact set with the property  that all specializations $P_\cc$ are defined and $\deg(P_\cc) = \deg(P)$ for  $\cc \in B$. Let $D(\infty,r)$ be the associated B\"ottcher domain as in Lemma \ref{Bottcher}. Pick an algebraic $\alpha \in D(\infty,r)$ and let $S_{\alpha,n,\cc}$ be 
\begin{align*}
S_{\alpha,n,\cc} = \{\beta: P^{\circ n}_\cc(\beta) = P^{\circ n}_\cc(\alpha)\}.
\end{align*}
For every $\epsilon > 0$ there exists a constant $c_\epsilon$ depending only on $\epsilon$ and $P$ (but not $\cc$ or $\alpha$) such that 
\begin{align*}
[\Q(\beta):\Q] \geq c D^{\frac n4 - \epsilon n} (1 + h_V(\cc) + h(\alpha))^{-1 -\epsilon}.
\end{align*}
for some $\beta \in S_{\alpha,n}$.
Moreover we can bound the points of low degree in $S_{\alpha,n,\cc}$. For every $\epsilon > 0$ and $\delta > 0$ there exists a constant $c_{\epsilon}$ such that the number of points in $S_{\alpha,n,\cc}$  of degree at most $D^{\delta n}$ divided by $D^n$ is bounded above by 
\begin{align*}
 c_\varepsilon D^{(4\delta +\epsilon -1)n}(1+ h_V(\cc) +  h(\alpha))^{4 + \epsilon}.
\end{align*}
\end{thm}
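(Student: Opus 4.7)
The plan is to reproduce the proof of Theorems~\ref{lowerbounddynamics} and \ref{proportiondynamics} from Section~\ref{dynamics_real} uniformly in the parameter $\cc$, using Lemma~\ref{Bottcher} in place of the pointwise B\"ottcher theorem and Lemma~\ref{uniform123} in place of the standard comparison between $\hat h_{P_\cc}$ and $h$. First I would use Lemma~\ref{Bottcher} to get a holomorphic B\"ottcher map $\Phi(z,\cc)$ on $D(\infty,r)\times B$ satisfying $\Phi(P_\cc(z),\cc)=\Phi(z,\cc)^D$, and (after shrinking $r$) such that $\Phi^{-1}(\cdot,\cc)$ extends holomorphically to $D(0,r^{1/D})$ for every $\cc\in B$. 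For the given algebraic $\alpha\in D(\infty,r)$ set
$$f^*_\cc(\tau)=\frac{1}{\Phi^{-1}(\exp(2\pi i(\tau-i/24))\Phi(\alpha,\cc),\cc)}$$
on the upper half plane and $f_\cc=f^*_\cc\circ\mu$, where $\mu(z)=i(1+z)/(1-z)$. Exactly as in Section~\ref{dynamics_real},
$$1/f^*_\cc\bigl(\bigl\{k/D^n+i/24:\lfloor-(D^n-1)/2\rfloor\leq k\leq\lfloor(D^n-1)/2\rfloor\bigr\}\bigr)=S_{\alpha,n,\cc},$$
and compactness of $B$ together with the uniform convergence in the proof of Lemma~\ref{Bottcher} supplies positive constants $a,b$ depending only on $P$ and $B$ so that $f_\cc$ satisfies the decay hypothesis of Theorem~\ref{decay} on $S=\mu^{-1}\{\tau:\Im\tau\geq 1/24,|\Re\tau|\leq 1/2\}$ uniformly in $\cc\in B$.

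Next, since $P_\cc^{\circ n}(\beta)=P_\cc^{\circ n}(\alpha)$ forces $\hat h_\cc(\beta)=\hat h_\cc(\alpha)$, Lemma~\ref{uniform123} gives $h(\beta)\leq h(\alpha)+2\delta_1 h_V(\cc)+2\delta_2$ for every $\beta\in S_{\alpha,n,\cc}$. The evaluation points $\mu^{-1}(k/D^n+i/24)$ all lie in a fixed compact subset of the unit disk (their moduli are bounded away from $1$ independently of $n$ and $k$), are algebraic of degree at most $2$, and have height bounded polynomially in $D^n$. Applying the ``moreover'' part of Theorem~\ref{decay} to $f_\cc$ with degree bound $2d$ and with $\log H\leq c(n\log D+h(\alpha)+h_V(\cc)+1)$ then yields
$$\#\{\beta\in S_{\alpha,n,\cc}:[\Q(\beta):\Q]\leq d\}\leq cd^4(\log d)^2\bigl(n\log D+h(\alpha)+h_V(\cc)+1\bigr)^4.$$

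For the first assertion, take $d$ to be the maximum of $[\Q(\beta):\Q]$ over $\beta\in S_{\alpha,n,\cc}$, so the left side equals $D^n$ and the resulting inequality is a lower bound on $d$; absorbing the polynomial-in-$n$ factors and the logarithms into $\epsilon$ (and using that the claimed inequality is trivial when its right side is less than $1$, handling small $n$) gives $d\geq c_\epsilon D^{n/4-\epsilon n}(1+h_V(\cc)+h(\alpha))^{-1-\epsilon}$. For the second, apply the same counting bound with $d=D^{\delta n}$, divide by $D^n$, and absorb the logarithmic factors as before. The main delicate point is verifying that the decay constants $a,b$ for $f_\cc$ can be chosen uniformly over the compact $B$, which amounts to uniform control of $\Phi$ and $\Phi^{-1}$ on a fixed annulus around infinity; this is provided by the uniform convergence argument inside the proof of Lemma~\ref{Bottcher} combined with compactness of $B$.
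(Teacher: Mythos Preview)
Your proposal is correct and follows essentially the same approach as the paper. The paper's proof is very terse—it simply observes that, given Lemma~\ref{uniform123}, one only has to check that the constants in Theorem~\ref{decay} can be taken uniformly for $\cc\in B$, and that this follows from the uniform bound on the coefficients of $P_\cc$ provided by compactness of $B$ and the construction in Lemma~\ref{Bottcher}; your write-up spells out precisely this argument (including the observation that the evaluation points lie in a fixed compact subset of the disk, justifying the use of the ``moreover'' clause of Theorem~\ref{decay}).
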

\begin{proof} For the proof, given Lemma \ref{uniform123}, we only have to make sure that the constant in Theorem \ref{decay} can be chosen uniformly for $\cc \in B$. From the construction of the B\"ottcher map it follows that we only need a uniform bound on the coefficients of $P$, which we have.  
\end{proof}
We note that if we have an effective bound for the length of $P_\cc$ as $\cc$ varies over $B$ all the constants in Theorem \ref{parameter} are effective.

\section{General Galois-bounds}\label{dynamics_p_adic}

We now prove Theorem \ref{general_intro} and Theorem \ref{irreducible}. We recall the statement of Theorem \ref{general_intro}.
\begin{thm}\label{general} Suppose that $K$ is a number field and that $P\in K[X]$ has degree $D$ at least $2$. Let $\epsilon>0$. Then there exist effectively computable $c>0$, depending only on $P$ and $\epsilon$, and $c'>0$ depending only on $P$ such that if $\alpha\in K$ and $n\ge c'[K:\Q]h(\alpha)$ then
\begin{align*}
[\Q(\beta):\Q] \ge c \frac{\min\{1,\hat{h}_P(\alpha)\}}{[K:\Q](1 + h(\alpha))^{4+\epsilon}} D^{\frac{n}{4}-\epsilon n}
\end{align*}
for some $\beta \in S_{\alpha,n}$. 
\end{thm}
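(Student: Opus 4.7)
The plan is to reduce the general case to an escape-at-infinity situation where either Theorem \ref{lowerbounddynamics} (in the archimedean case) or the $p$-adic B\"ottcher arguments of Ingram \cite{Ingram} and DeMarco \textit{et al.} \cite{deMarco} (in the non-archimedean case) can be applied. If $\alpha$ is preperiodic then $\hat{h}_P(\alpha)=0$ and the claimed inequality is trivial, so I may assume $\hat{h}_P(\alpha)>0$. In general $\alpha$ itself need not satisfy $|\alpha|_v\geq R$ at any place, so I will iterate: put $\alpha_m=P^{\circ m}(\alpha)$, apply the escape bound to $\alpha_m$ for a suitable $m$, and then transfer the resulting degree lower bound back to $S_{\alpha,n}$ via preimages.

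The first step is to find $m$, bounded in terms of $[K:\Q]$, $h(\alpha)$ and $\hat{h}_P(\alpha)$, such that some place $v$ of $K$ has $|\alpha_m|_v$ exceeding the escape radius of the appropriate B\"ottcher map. Using $\hat{h}_P(\alpha_m)=D^m\hat{h}_P(\alpha)$ together with $|\hat{h}_P-h|\leq c_P$, one has $h(\alpha_m)\geq D^m\hat{h}_P(\alpha)-c_P$; expanding $[K:\Q]h(\alpha_m)=\sum_v n_v\log^+|\alpha_m|_v$ then forces $|\alpha_m|_v$ to be large at some place once $D^m\hat{h}_P(\alpha)$ is large compared with $[K:\Q]$. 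One can arrange this with $m$ of order $[K:\Q]h(\alpha)+\log(1/\min\{1,\hat{h}_P(\alpha)\})/\log D$, and the hypothesis $n\geq c'[K:\Q]h(\alpha)$ is chosen so that $n-m$ remains a fixed positive fraction of $n$.

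Next, depending on whether the escape place $v$ is archimedean or not, I apply Theorem \ref{lowerbounddynamics} (after embedding $K$ into $\C$ via $v$) or its $p$-adic counterpart to obtain some $\gamma\in S_{\alpha_m,n-m}$ with
\begin{align*}
[\Q(\gamma):\Q]\geq c_\epsilon(1+h(\alpha_m))^{-1-\epsilon}D^{(n-m)/4-\epsilon(n-m)}.
\end{align*}
To transfer this back to the original level, pick any $\beta\in\Qbar$ with $P^{\circ m}(\beta)=\gamma$; since $P^{\circ n}(\beta)=P^{\circ(n-m)}(\gamma)=P^{\circ n}(\alpha)$, we have $\beta\in S_{\alpha,n}$, and because $P\in K[X]$ the element $\gamma$ lies in $K(\beta)$, so $[\Q(\beta):\Q]\geq [\Q(\gamma):\Q]/[K:\Q]$. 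Substituting $h(\alpha_m)\leq D^m(h(\alpha)+c_P)$ and collecting the $D^m$ factors then yields the stated bound, with $(1+h(\alpha))^{4+\epsilon}$ in the denominator arising from the combined effect of the $(1+h(\alpha_m))^{-1-\epsilon}$ factor in Theorem \ref{lowerbounddynamics} and the $D^{m(1+\epsilon)}$ penalty incurred when iterating back by $m$ steps.

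The main obstacle will be coordinating the archimedean and non-archimedean branches in the escape step and tracking constants so that the exponents $n/4-\epsilon n$ and $4+\epsilon$ come out exactly as stated; in particular, the $p$-adic inputs from \cite{Ingram,deMarco} must be organised so as to deliver a degree lower bound of comparable quality to Theorem \ref{lowerbounddynamics}, and the choice of $m$ must be simultaneously large enough to guarantee escape and small enough for $D^{m(1+\epsilon)}$ to be absorbed into the $(1+h(\alpha))^{4+\epsilon}$ factor.
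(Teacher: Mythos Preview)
Your overall strategy---iterate until $\alpha_m$ escapes, apply the archimedean or $p$-adic B\"ottcher bound there, then transfer back---is correct, but you organise the transfer differently from the paper. The paper pushes \emph{forward}: it maps $S_{\alpha,n}$ into $S_{P^{\circ k}(\alpha),n}$ via $P^{\circ k}$ (keeping the same $n$), notes that the image has at least $D^{n-k-1}$ elements all of degree at most $d:=\max_{\beta\in S_{\alpha,n}}[K(\beta):\Q]$, and then applies the \emph{counting} results (Theorem~\ref{proportiondynamics} and the ``moreover'' of Theorem~\ref{Galoispadic}) to bound the number of low-degree points in $S_{P^{\circ k}(\alpha),n}$. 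You instead pull \emph{back}: you find a single high-degree $\gamma\in S_{\alpha_m,n-m}$ via Theorem~\ref{lowerbounddynamics} or its $p$-adic analogue and lift through a preimage under $P^{\circ m}$. Your route is a little more direct, needing only the single-point lower bounds rather than the counting versions; the paper's route keeps the iteration count at $n$ throughout, which avoids worrying about whether $n-m$ is positive.

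Two points of bookkeeping need adjustment. First, the escape index $m$ depends only on $\hat{h}_P(\alpha)$ and $P$, not on $[K:\Q]h(\alpha)$: by Lemma~\ref{boundedheight}, once $h(\alpha_m)\geq D^m\hat{h}_P(\alpha)-c_P$ exceeds a constant depending only on $P$, some $|\alpha_m|_v>\delta_v$. The quantity $[K:\Q]h(\alpha)$ enters separately, through the exponent in Theorem~\ref{Galoispadic}, and it is this that forces the hypothesis $n\geq c'[K:\Q]h(\alpha)$. Second, with the minimal such $m$ one has $h(\alpha_m)$ bounded by a constant depending only on $P$, so your substitution $h(\alpha_m)\leq D^m(h(\alpha)+c_P)$ is unnecessarily pessimistic and your explanation of where the exponent $4+\epsilon$ on $1+h(\alpha)$ comes from does not hold up; in fact both arguments give at worst $(1+h(\alpha))^{-1-\epsilon}$ in the large-height regime, and the stated $4+\epsilon$ is simply a weaker unified form. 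Finally, be aware that the $p$-adic input (Theorem~\ref{Galoispadic}) does not have the same shape as Theorem~\ref{lowerbounddynamics}: it yields $[\Q(\gamma):\Q]\geq D^{n-m'}/[K:\Q]$ with $m'\ll[K:\Q]h(\alpha_m)$, asymptotically stronger in $n$ but requiring separate handling of the exponent.
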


As before, we may assume $P$ is monic. Now  fix a number field $K$.  In what follows, given a prime $p$ we write $|\cdot |_v$ for the extension of a $p$-adic valuation on $\Q$ to $K$ with the standard normalization and $K_v$ for the completion of $K$ with respect to $v$. We write $\C_v$ for the completion of the algebraic closure of $K_v$.  And we let $D_v(0,r)$ and $D_v(\infty, r)$ denote the sets of points in $\C_v$ such that $|z|_v < r$ and $|z|_v>r$, respectively. Suppose that 
\[
P(z)= z^D + a_{1}z^{D-1} + \dots + a_D
\]
is a polynomial over $K$. For primes $p$ not dividing $D$ we define 
\begin{align*}
\delta_v = \max_{i=1, \dots, D}\{1, |a_i|_v\}. 
\end{align*}
And for primes $p$ that do divide $D$ we set 
\begin{align*}
\delta_v =\frac{\max_{i=1,\dots, D}\{1,|a_i|_v\}p^{\frac1{p-1}}}{|D|_v}.
\end{align*}
We will use the following result, due to De Marco \textit{et al} (\cite[Theorem 6.5]{deMarco}) which extends work of Ingram (\cite{Ingram}).
\begin{thm}[Ingram \cite{Ingram}, De Marco \textit{et al} \cite{deMarco}]In the setting described above, there exists an injective analytic function $\Phi_{v}$  with domain $D_v(\infty, \delta_v)$  such that $\Phi_{v}(P(z)) = \Phi_{v}(z)^D$. Moreover, $\Phi_v$ has the property that if $z$ lies in a finite extension of $K_v$ then this extension also contains $\Phi_v(z)$. 
\end{thm}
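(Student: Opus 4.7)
The plan is to mimic the complex construction of Lemma~\ref{Bottcher} in the non-archimedean setting, defining $\Phi_v$ as an infinite product built from the iterates $P^{\circ n}$. The non-archimedean picture is in fact cleaner than the complex one thanks to the ultrametric inequality, but one has to be careful about extracting $D^n$-th roots when $p\mid D$, and this is precisely what the factor $p^{1/(p-1)}/|D|_v$ in the definition of $\delta_v$ is designed to handle.

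First I would check that $P$ preserves $D_v(\infty,\delta_v)$. Writing $P(z)=z^D(1+g(z))$ with $g(z)=\sum_{i=1}^{D}a_iz^{-i}$, the condition $|z|_v>\delta_v\ge\max_i|a_i|_v$ gives $|g(z)|_v<1$, so by the ultrametric inequality $|P(z)|_v=|z|_v^D>\delta_v^D\ge\delta_v$. Hence the iterates $P^{\circ n}(z)$ are well defined on $D_v(\infty,\delta_v)$ and satisfy $|P^{\circ n}(z)|_v=|z|_v^{D^n}$, which forces $|g(P^{\circ k}(z))|_v$ to tend to $0$ extremely rapidly as $k\to\infty$.

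Next, following the archimedean case, I would set
\begin{align*}
\Phi_v(z) = z\prod_{k=0}^{\infty}\bigl(1+g(P^{\circ k}(z))\bigr)^{1/D^{k+1}},
\end{align*}
where the $D^{k+1}$-th roots are chosen via the binomial series $(1+x)^{1/D^{k+1}}$. If $p\nmid D$ then $D^{k+1}$ is a $v$-adic unit, so the series converges whenever $|x|_v<1$ and, by Hensel's lemma, the resulting root lies in the same finite extension of $K_v$ as $x$. If $p\mid D$ the convergence condition is stricter, and it is exactly here that the extra factor $p^{1/(p-1)}/|D|_v$ in $\delta_v$, combined with the rapid decay of $g(P^{\circ k}(z))$, is needed so that every $D^{k+1}$-th root in the product is defined and stays in the same field as $z$.

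Finally, the functional equation $\Phi_v(P(z))=\Phi_v(z)^D$ is immediate from shifting the index in the product, and injectivity follows from $\Phi_v(z)=z(1+O(1/z))$ together with the standard non-archimedean implicit function theorem applied to $\Phi_v-w$. The field-theoretic conclusion is then built into the construction, since every operation (polynomial evaluation and binomial-series $D^{k+1}$-th roots) is rational over $K_v$. The main obstacle, as indicated, is the $p\mid D$ case, where one must track the $v$-adic valuations of the binomial coefficients in $(1+x)^{1/D^{k+1}}$ and verify that the chosen $\delta_v$ makes the entire sequence of $D^{k+1}$-th roots convergent and compatible.
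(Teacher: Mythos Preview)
The paper does not give its own proof of this theorem: it is quoted as \cite[Theorem 6.5]{deMarco}, extending \cite{Ingram}, and is used as a black box. So there is no ``paper's proof'' to compare against here.

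That said, your sketch is exactly the approach taken in those references. The construction of $\Phi_v$ as $z\prod_{k\ge 0}(1+g(P^{\circ k}(z)))^{1/D^{k+1}}$ (equivalently, as the limit of $(P^{\circ n}(z))^{1/D^n}$) is the standard non-archimedean B\"ottcher construction, and your verification that $P$ maps $D_v(\infty,\delta_v)$ into itself with $|P(z)|_v=|z|_v^D$ is correct. For $p\nmid D$ the binomial series $(1+x)^{1/D^{k+1}}$ has $\mathbb{Z}_p$-coefficients, so it converges on $|x|_v<1$ and lands in $K_v(z)$; this part is fine as written.

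The one place your sketch is genuinely incomplete is the case $p\mid D$, which you flag but do not carry out. There the exponent $1/D^{k+1}$ has $v$-adic valuation tending to $-\infty$, and one must check that the doubly-exponential decay $|g(P^{\circ k}(z))|_v\le \delta_v/|z|_v^{D^k}$ beats the growth of the binomial denominators. Concretely, one needs $|g(P^{\circ k}(z))|_v < |D^{k+1}|_v\cdot p^{-1/(p-1)}$ for the root to exist in the same field (this is the standard radius for the $p$-adic $D^{k+1}$-th root via $\exp(\tfrac{1}{D^{k+1}}\log(1+x))$ or Hensel), and the enlarged $\delta_v$ with the factor $p^{1/(p-1)}/|D|_v$ is chosen precisely so that this holds already for $k=0$; for $k\ge 1$ the decay of $|g(P^{\circ k}(z))|_v$ dominates easily. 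Filling this in is routine but not automatic, and it is the content of the cited theorem rather than something one can wave through.
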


Before we proceed with the proof we need more information about cyclotomic extensions of $p$-adic fields.  First some (basic) group theory. 
\begin{lemma}\label{grouptheory} For a prime $q$ let $a>1$ be an integer coprime to $q$. If $q\neq 2$ let $e$ be the order of $a$ in $(\Z/q\Z)^*$ and $m$ be maximal with the property $a^e = 1 \mod q^m$. If $q=2$ let $e$ be the order of $a$ in $(\Z/4\Z)^*$ and $m$ be maximal such that $a^e = 1 \mod 2^m$. For $n \geq m$ the order of $a$ in $(\Z/q^n\Z)^*$ is $eq^{n-m}$. 
\end{lemma}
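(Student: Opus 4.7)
The plan is to establish by induction on $k\geq 0$ that
\begin{equation*}
a^{eq^k} = 1 + c_k q^{m+k}
\end{equation*}
for some integer $c_k$ coprime to $q$; the desired conclusion about the order in $(\Z/q^n\Z)^*$ will then fall out immediately. The base case $k=0$ is the very definition of $m$.

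For the inductive step when $q$ is odd, the binomial expansion of $(1+c_k q^{m+k})^q$ has leading terms $1 + c_k q^{m+k+1}$, and each remaining term $\binom{q}{j}c_k^j q^{j(m+k)}$ with $2\leq j\leq q$ has $q$-adic valuation at least $m+k+2$: for $1\leq j\leq q-1$ one uses that $q\mid\binom{q}{j}$ together with $m\geq 1$ (valid since $a^e\equiv 1\pmod q$), and for $j=q$ one has $q(m+k)\geq m+k+2$. Thus $c_{k+1}\equiv c_k\pmod q$, which is coprime to $q$. The case $q=2$ is the main subtlety, and the reason for the different definition of $e$ and $m$: the analogous expansion $(1+c_k 2^{m+k})^2 = 1 + c_k 2^{m+k+1} + c_k^2 2^{2(m+k)}$ needs $2(m+k)\geq m+k+2$, i.e.\ $m\geq 2$. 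This is guaranteed precisely because $m$ is defined using congruence modulo~$4$: when $e$ is the order of $a$ in $(\Z/4\Z)^*$, by construction $a^e\equiv 1\pmod 4$, so $m\geq 2$ automatically.

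With the inductive claim in hand, $a^{eq^{n-m}}\equiv 1\pmod{q^n}$ shows that the order of $a$ in $(\Z/q^n\Z)^*$ divides $eq^{n-m}$. Conversely, reducing modulo $q$ (respectively modulo $4$ when $q=2$) forces the order to be a multiple of $e$; and when $n>m$, the fact that $c_{n-m-1}$ is coprime to $q$ shows that $a^{eq^{n-m-1}}\not\equiv 1\pmod{q^n}$, ruling out a smaller power of $q$ in the order. Hence the order is exactly $eq^{n-m}$, as claimed.
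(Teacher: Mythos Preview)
Your proof is correct and follows essentially the same route as the paper's: both argue by induction using the binomial expansion of $(1+bq^n)^q$, separating the cases $q$ odd and $q=2$, and both track the auxiliary statement that the relevant coefficient stays coprime to $q$ (the paper phrases this as ``$n$ is maximal with $a^{eq^{n-m}}\equiv 1\bmod q^n$''). Your write-up is arguably a bit cleaner in that you isolate the lifting-the-exponent formula $a^{eq^k}=1+c_kq^{m+k}$ first and then read off the order, and you make explicit why the definition via $(\Z/4\Z)^*$ forces $m\ge 2$ when $q=2$, which the paper leaves implicit.
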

\begin{proof}
We prove this by induction on $n\ge m$, and prove simultaneously that $n$ is the maximal $k$ such that $a^{eq^{n-m}}=1\mod q^ k$. For $n=m$ this is our assumption, so assume that the result holds for some $n\ge m$. So
\[
a^{eq^{n-m}}=1 \mod q^n
\]
 and so 
\[
a^{eq^{n-m}}=1 +bq^n
\]
for some $b$. If $q|b$ then $a^{eq^{n-m}}=1 \mod q^{n+1}$, a contradiction. So $(b,q)=1$. We have
\[
a^{eq^{n+1-m}}=1+bq^{n+1}+\sum_{i=2}^qb^iq^{ni}\binom{q}{i}.
\]
Considering separately the cases where $q>2$ and $q=2$, we see that the sum on the right of this expression will be divisible by $q^{n+2}$, and so 
\[
a^{eq^{n+1-m}}=1 +\tilde{b}q^{n+1}
\]
for some $\tilde{b}$ with $(\tilde{b},q)=1$. Thus $n+1$ is maximal such that $a^{eq^{n+1-m}} = 1 \mod q^{n+1}$. 

The order, $f$ say, of $a$ in $(\Z/q^{n+1}\Z)^*$ divides $eq^{n+1-m}$. On the other hand $e|f$, since $a^f=1 \mod q$. So $f=eq^k$ for some $k\le n+1-m$. If $k<n+1-m$ then $k=l-m$ for some $l<n+1$, and then $a^{eq^{l-m}}=1 \mod q^{n+1}$, contradicting our inductive assumption on the maximality of $l$. So $f=eq^{n+1-m}$, as required.
\end{proof}
Now we can deduce the following. 
\begin{lemma}\label{order} Let $a$ and $ D$ be  coprime integers both greater than $1$. For a prime $q$ dividing $D$ let $m_q$ be as $m$ in Lemma \ref{grouptheory}. Let $m$ be maximal among these $m_q$. Then for $b$ dividing a power of $D$, the order of $a$ in $(\Z/b\Z)^*$  is at least $bD^{-m}$. 
\end{lemma}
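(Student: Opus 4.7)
The plan is to use the Chinese Remainder Theorem to reduce to prime power moduli, apply Lemma \ref{grouptheory} at each prime dividing $D$, and then combine the local information using the elementary bound $\prod_{q\mid D} q \leq D$.

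First I would observe that since $b$ divides a power of $D$, only primes dividing $D$ appear in its factorization, so we may write $b = \prod_{q \mid D} q^{n_q}$ for nonnegative integers $n_q$. By CRT we have $(\Z/b\Z)^* \cong \prod_{q \mid D}(\Z/q^{n_q}\Z)^*$, and hence the order of $a$ in $(\Z/b\Z)^*$ equals the least common multiple of the orders $o_q$ of $a$ in each factor $(\Z/q^{n_q}\Z)^*$. Note that $a$ is coprime to $b$ because $a$ is coprime to $D$, so $a$ really is a unit modulo each $q^{n_q}$.

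Next I would bound the $q$-adic valuation of this LCM for each $q \mid D$. For primes $q$ with $n_q \geq m_q$, Lemma \ref{grouptheory} gives $o_q = e_q \, q^{n_q - m_q}$ (with $e_q$ coprime to $q$ when $q$ is odd, and $e_q \in\{1,2\}$ when $q=2$); in particular $v_q(o_q) \geq n_q - m_q$, so $v_q(\mathrm{lcm}) \geq n_q - m_q$. For primes $q$ with $n_q < m_q$ we simply use $v_q(\mathrm{lcm}) \geq 0$. In either case $v_q(\mathrm{lcm}) \geq \max(0, n_q - m_q)$, and therefore
\[
\mathrm{lcm}_q o_q \;\geq\; \prod_{q \mid D} q^{\max(0,\, n_q - m_q)} \;=\; \frac{b}{\prod_{q \mid D} q^{\min(n_q,\, m_q)}}.
\]

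Finally I would control the denominator: since $m_q \leq m$ for every $q$ and the radical $\prod_{q \mid D} q$ divides $D$, we have $\prod_{q\mid D} q^{\min(n_q, m_q)} \leq \prod_{q\mid D} q^{m} \leq D^m$, which gives the desired inequality $\mathrm{ord}_{(\Z/b\Z)^*}(a) \geq b/D^m$. There is no serious obstacle here; the only piece of bookkeeping worth noting is the split into the two regimes $n_q \geq m_q$ and $n_q < m_q$, which is handled cleanly by taking the $\max(0, \cdot)$ in the exponent.
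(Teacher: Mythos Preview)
Your proof is correct and follows essentially the same approach as the paper: both reduce to prime-power moduli, invoke Lemma \ref{grouptheory} to get $q^{n_q-m_q}\mid f$ when $n_q\ge m_q$, and then bound $\prod_{q\mid b} q^{m_q}\le(\prod_{q\mid b}q)^m\le D^m$. Your version is slightly more explicit in handling the regime $n_q<m_q$ via the $\max(0,\cdot)$ exponent, whereas the paper absorbs this case directly into the coarser bound $\prod_{q\mid b}q^{n_q-m}$, but the substance is identical.
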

\begin{proof} Let $b = \prod_{q|b} q^{n_q}$ be the prime decomposition of $b$. Suppose that  $a^{f} = 1 \mod b$. Then $a^f = 1 \mod q^{n_q}$ for all $q|b$. If $n_q \geq m_q$ then by the previous lemma, $q^{n_q-m_q}$ divides $f$. So $f \geq \prod_{q|b}q^{n_q - m}$ and we are done since $ \prod_{q|b} q^{n_q - m}\geq bD^{-m}$.
\end{proof}
For a positive integer $l$ we denote by $\zeta_l$ a primitive $l$-th root of unity. 
\begin{lemma}\label{gal1} If a prime $p$ does not divide a positive integer $D$ then for any $b$ dividing a  power of $D$
\begin{align*}
[\Q_p(\zeta_{b}):\Q_p] \geq bD^{-m}
\end{align*}
for some $m \leq \frac{D-1}{\log 2}\log p$.
\end{lemma}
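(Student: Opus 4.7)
The plan is to reduce the problem to estimating the multiplicative order of $p$ modulo $b$, and then feed this into Lemma \ref{order}. Recall that for $b$ coprime to $p$, the local cyclotomic extension $\Q_p(\zeta_b)/\Q_p$ is unramified of degree equal to the order of $p$ in $(\Z/b\Z)^*$. Since $b$ divides a power of $D$ and $p\nmid D$ by hypothesis, we have $\gcd(p,b)=1$, so this description of the degree applies.

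Next, I would apply Lemma \ref{order} with $a=p$, noting that the primes dividing $b$ all divide $D$ (since $b$ divides a power of $D$), so the integer $m$ appearing there is well-defined as $\max_{q\mid D} m_q$, where $m_q$ is as in Lemma \ref{grouptheory}. This immediately gives
\[
[\Q_p(\zeta_b):\Q_p] \ge bD^{-m},
\]
so the only remaining task is to show that $m$ admits the claimed upper bound in terms of $D$ and $p$.

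For this, fix a prime $q \mid D$ and let $e_q$ be the order of $p$ in $(\Z/q\Z)^*$ (or in $(\Z/4\Z)^*$ if $q=2$); so $e_q \le q-1 \le D-1$ in the odd case and $e_q\le 2$ in the case $q=2$, in either case $e_q \le D-1$ (for $D\ge 2$). By definition of $m_q$, we have $q^{m_q} \mid p^{e_q}-1$, so
\[
q^{m_q} \le p^{e_q}-1 \le p^{D-1},
\]
which yields $m_q \le (D-1)\log p /\log q \le (D-1)\log p/\log 2$, since $q\ge 2$. Taking the maximum over $q \mid D$ gives $m \le (D-1)\log p/\log 2$, completing the plan.

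The argument is essentially routine given the preceding lemmas; the only subtlety is making sure the definition of $m_q$ in the $q=2$ case (which uses order modulo $4$ rather than modulo $2$) does not disrupt the bound $e_q \le D-1$, but this is harmless for $D\ge 2$.
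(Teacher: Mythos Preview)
Your argument is essentially identical to the paper's: identify $[\Q_p(\zeta_b):\Q_p]$ with the multiplicative order of $p$ modulo $b$ (via the standard fact about unramified cyclotomic extensions), invoke Lemma~\ref{order} with $a=p$, and bound each $m_q$ through $q^{m_q}\le p^{e_q}-1\le p^{D-1}$. The paper's proof is terser and does not separately discuss the $q=2$ case, but the line of reasoning is the same.
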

\begin{proof}
By  \cite[Proposition 7.12]{Neukirch} the degree $[\Q_p(\zeta_{b}):\Q_p]$ is equal to the order of $p$ in $(\Z/b\Z)^*$. Let $e$ be the order of $p$ in  $(\Z/q\Z)^*$ where $q$ is a prime dividing $D$ and let $m$ be maximal such that $p^e=1 \mod q^m$. Then $q^m \leq p^e-1 \leq p^{D-1}$ and so $m \leq\frac{D-1}{\log 2}\log p $. We conclude with Lemma \ref{order}. 
\end{proof}

\begin{lemma}\label{gal2} Let $\mathcal{K} = \Q_p(\zeta_{b})$ where the prime $p$ does not divide $b$. Then  
\begin{align*}
[\mathcal{K}(\zeta_{p^k}):\mathcal{K}] = p^{k-1}(p-1).
\end{align*}
\end{lemma}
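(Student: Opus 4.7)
The plan is to exploit the clean dichotomy between the unramified and totally ramified parts of cyclotomic extensions of $\Q_p$. Two standard facts about local cyclotomic fields will do all the work: first, since $p\nmid b$, the extension $\Q_p(\zeta_b)/\Q_p$ is unramified of degree equal to the order of $p$ in $(\Z/b\Z)^*$ (this is essentially the content of \cite[Proposition 7.12]{Neukirch}, already used in Lemma \ref{gal1}); second, the extension $\Q_p(\zeta_{p^k})/\Q_p$ is totally ramified of degree $\varphi(p^k)=p^{k-1}(p-1)$, with $\zeta_{p^k}-1$ a uniformizer whose minimal polynomial is the standard Eisenstein $p^k$-th cyclotomic polynomial.

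With these in hand, the argument is as follows. First, the upper bound $[\mathcal{K}(\zeta_{p^k}):\mathcal{K}]\le p^{k-1}(p-1)$ is automatic because $\zeta_{p^k}$ already satisfies a polynomial of degree $\varphi(p^k)$ over $\Q_p$, hence over $\mathcal{K}$. For the matching lower bound, I would compare ramification indices over $\Q_p$. Let $e(L/\Q_p)$ denote the ramification index. Since $\mathcal{K}/\Q_p$ is unramified, $e(\mathcal{K}/\Q_p)=1$, and by multiplicativity
\begin{equation*}
e(\mathcal{K}(\zeta_{p^k})/\Q_p) \;=\; e(\mathcal{K}(\zeta_{p^k})/\mathcal{K}).
\end{equation*}
But $\mathcal{K}(\zeta_{p^k})\supseteq \Q_p(\zeta_{p^k})$, so its ramification index over $\Q_p$ is at least $e(\Q_p(\zeta_{p^k})/\Q_p)=p^{k-1}(p-1)$. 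Hence $e(\mathcal{K}(\zeta_{p^k})/\mathcal{K})\ge p^{k-1}(p-1)$, which forces the degree $[\mathcal{K}(\zeta_{p^k}):\mathcal{K}]\ge p^{k-1}(p-1)$. Combined with the upper bound this yields the equality.

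There is really no main obstacle here; the only subtlety is making sure one invokes the right ramification facts, namely that $\Q_p(\zeta_b)/\Q_p$ is unramified for $p\nmid b$ and that $\Q_p(\zeta_{p^k})/\Q_p$ is totally ramified of degree $\varphi(p^k)$. Both are classical and can be cited from \cite{Neukirch} (the latter, for instance, from the analysis of the $p^k$-th cyclotomic polynomial as Eisenstein after the substitution $X\mapsto X+1$). An equivalent phrasing, which may be marginally cleaner to write, is simply to observe that an unramified and a totally ramified extension of a local field are linearly disjoint, so the compositum has degree equal to the product; this gives $[\mathcal{K}(\zeta_{p^k}):\Q_p]=[\mathcal{K}:\Q_p]\cdot p^{k-1}(p-1)$, and dividing by $[\mathcal{K}:\Q_p]$ completes the proof.
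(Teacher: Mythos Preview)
Your argument is correct. Both your proof and the paper's hinge on the same fact, that $\mathcal{K}=\Q_p(\zeta_b)$ is unramified over $\Q_p$ when $p\nmid b$, but they exploit it differently. The paper observes that unramifiedness means $p$ remains prime in the ring of integers of $\mathcal{K}$, so the $p^k$-th cyclotomic polynomial is still Eisenstein (after the shift $X\mapsto X+1$) over $\mathcal{K}$ and hence irreducible there; this gives the degree in one stroke. You instead import the totally ramified degree of $\Q_p(\zeta_{p^k})/\Q_p$ as a black box and then compare ramification indices in the tower $\Q_p\subset\mathcal{K}\subset\mathcal{K}(\zeta_{p^k})$, which is the linear-disjointness formulation you mention at the end. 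The paper's route is marginally shorter and more self-contained; yours is a touch more conceptual and would generalise verbatim to any totally ramified extension in place of $\Q_p(\zeta_{p^k})$.
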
 
\begin{proof} By \cite[Proposition 7.12]{Neukirch} the extension $\mathcal{K}$ is unramified over $\Q_p$. In particular $p$ stays prime in $\mathcal{K}$. Hence we can apply the Eisenstein criterion to the cyclotomic polynomial $\phi_{p^k}$ in $\mathcal{K}$ just as in the proof of \cite[Proposition 7.13]{Neukirch}.
\end{proof}

Combining the previous lemmas we obtain the following.
\begin{cor} \label{galcor} For a prime $p$ and a positive integer $b$ dividing a power of a positive integer $D$,
\begin{align*} 
[\Q_p(\zeta_{b}):\Q_p] \geq bD^{-m}
\end{align*}
for some $m \leq \frac{D-1}{\log 2}\log p$.
\end{cor}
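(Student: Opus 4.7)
My plan is to reduce to Lemma \ref{gal1} and Lemma \ref{gal2} by splitting the argument on whether $p$ divides $D$.

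First, I would dispose of the easy case. If $p \nmid D$, then since $b$ divides a power of $D$, the prime $p$ also does not divide $b$. Lemma \ref{gal1} applied to $(p,D,b)$ then yields the bound directly with $m \leq \frac{D-1}{\log 2}\log p$.

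Now suppose $p \mid D$. Write $D = p^{s}D'$ with $\gcd(D',p)=1$ and $b = b' p^{k}$ with $\gcd(b',p)=1$. Since $b$ is supported on primes dividing $D$, the integer $b'$ is supported on primes dividing $D'$, so $b'$ divides a power of $D'$. Because $\gcd(b',p^{k})=1$, the Chinese Remainder Theorem gives $\Q_{p}(\zeta_{b}) = \Q_{p}(\zeta_{b'},\zeta_{p^{k}})$, producing the tower
\[
\Q_{p} \;\subseteq\; \Q_{p}(\zeta_{b'}) \;\subseteq\; \Q_{p}(\zeta_{b'},\zeta_{p^{k}}) \;=\; \Q_{p}(\zeta_{b}).
\]
Apply Lemma \ref{gal1} to $(p,D',b')$, which is legitimate since $p \nmid D'$, to obtain some $m' \leq \frac{D'-1}{\log 2}\log p$ with $[\Q_{p}(\zeta_{b'}):\Q_{p}] \geq b'(D')^{-m'}$. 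Apply Lemma \ref{gal2} to $\mathcal{K} = \Q_{p}(\zeta_{b'})$ (noting $p \nmid b'$) to get $[\Q_{p}(\zeta_{b}):\mathcal{K}] = p^{k-1}(p-1)$. Multiplying the two estimates and using $(D')^{-m'} \geq D^{-m'}$ together with $p^{k-1}(p-1) \geq p^{k}/2 \geq p^{k}/D$ (which holds because $D \geq 2$), I obtain
\[
[\Q_{p}(\zeta_{b}):\Q_{p}] \;\geq\; b' D^{-m'} \cdot \tfrac{p^{k}}{D} \;=\; b\, D^{-(m'+1)}.
\]

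It remains to verify that $m := m'+1$ still respects the claimed bound. Since $p \mid D$ and $p \geq 2$, one has $D' \leq D/p \leq D-1$, hence $m' \leq \frac{D-2}{\log 2}\log p$. Therefore
\[
m = m'+1 \;\leq\; \frac{D-2}{\log 2}\log p + 1 \;\leq\; \frac{D-1}{\log 2}\log p,
\]
where the last inequality uses $\log p \geq \log 2$. This closes the case split and proves the corollary. The main (and only) subtlety is the bookkeeping in this last step: the factor of $2$ lost from $p-1$ versus $p$ must be absorbed into the exponent $m$, and one has to check that the resulting $m' + 1$ does not exceed $\frac{D-1}{\log 2}\log p$ — which works out precisely because replacing $D$ by $D' \leq D-1$ in the hypothesis of Lemma \ref{gal1} gives a saving of one factor of $\log p/\log 2 \geq 1$.
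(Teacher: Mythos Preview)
Your argument follows the same route as the paper's: factor $b$ into its $p$-part and prime-to-$p$ part, then combine Lemma~\ref{gal1} with Lemma~\ref{gal2}; you have simply made explicit the bookkeeping on $m$ that the paper leaves implicit. One small boundary case to tidy: when $p\mid D$ but $k=0$ (i.e.\ $p\nmid b$), Lemma~\ref{gal2} does not give $p^{k-1}(p-1)$, so handle that case directly from Lemma~\ref{gal1} applied to $(p,D',b)$, which already yields $[\Q_p(\zeta_b):\Q_p]\ge b(D')^{-m'}\ge bD^{-m'}$ with $m'\le\frac{D-1}{\log 2}\log p$.
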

\begin{proof} We write $b = p^{n_p}\tilde{b}$ where $n_p,\tilde{b}$ are positive integers such that $p$ does not divide $\tilde{b}$. Since $\Q_p(\zeta_{b}) = \Q_p(\zeta_{\tilde{b}}, \zeta_{p^{n_p}})$ we obtain the corollary from Lemmas \ref{gal1} and \ref{gal2}.
\end{proof}

Before we prove Theorem \ref{general} we prove the $p$-adic version of the Galois bounds. These turn out to have a stronger asymptotic than the complex ones but in general with  a worse dependence on the point $\alpha$. 

\begin{thm}\label{Galoispadic} Suppose that $|\alpha|_v > \delta_v$ for some $v$. Then 
\begin{align*}
[\Q(\beta): \Q] \geq \frac{D^{n-m}}{[K:\Q]}
\end{align*} 
for some  $m \leq \frac{(D-1)[K:\Q]}{\log 2} h(\alpha)$ and some $\beta \in S_{\alpha,n}$. Moreover with the same $m$ we have
\begin{align*}
\# \{\beta' \in S_{\alpha,n}: [K(\beta'):\Q] \leq d\} \leq d^2D^{2m}. 
\end{align*}
\end{thm}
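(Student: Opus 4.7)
The plan is to use the $p$-adic B\"ottcher map $\Phi_v$ to parametrize $S_{\alpha,n}$ explicitly by $D^n$-th roots of unity, and then extract the degree lower bound from the cyclotomic estimate of Corollary \ref{galcor}.

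Set $w := \Phi_v(\alpha) \in K_v^\times$. For every $D^n$-th root of unity $\zeta$ we have $|\zeta w|_v = |w|_v$, so $\zeta w$ lies in the image of $\Phi_v$ and $\beta_\zeta := \Phi_v^{-1}(\zeta w)$ is well-defined. The functional equation $\Phi_v \circ P = \Phi_v^D$ yields
\[
\Phi_v(P^{\circ n}(\beta_\zeta)) = (\zeta w)^{D^n} = w^{D^n} = \Phi_v(P^{\circ n}(\alpha)),
\]
so injectivity of $\Phi_v$ gives $\beta_\zeta \in S_{\alpha,n}$, and the $D^n$ distinct points $\beta_\zeta$ therefore exhaust this set. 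By the rationality clause in the Ingram--De Marco theorem, applied to both $\Phi_v$ and its inverse, $K_v(\beta_\zeta) = K_v(\zeta w) = K_v(\zeta)$, the last equality using $w \in K_v^\times$.

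Take $\zeta$ primitive of order $D^n$; Corollary \ref{galcor} (with $b = D^n$) gives $[\Q_p(\zeta_{D^n}):\Q_p] \geq D^{n-m}$ for some $m \leq \frac{D-1}{\log 2}\log p$. The elementary compositum identity $K_v(\beta) = K_v \cdot \Q_p(\beta)$ yields $[\Q_p(\beta):\Q_p] \geq [K_v(\beta):K_v]$, and passing from $\Q_p$ to $\Q$ via the divisibility of minimal polynomials gives $[\Q(\beta):\Q] \geq [\Q_p(\beta):\Q_p]$. Chaining these,
\[
[\Q(\beta_\zeta):\Q] \;\geq\; [K_v(\zeta):K_v] \;\geq\; \frac{[\Q_p(\zeta):\Q_p]}{[K_v:\Q_p]} \;\geq\; \frac{D^{n-m}}{[K:\Q]}.
\]
To express $m$ in terms of $h(\alpha)$, note that $|\alpha|_v > \delta_v \geq 1$ forces $v(\alpha) < 0$, hence $|\alpha|_v \geq p^{1/e_v}$; the standard decomposition of the height then gives $h(\alpha) \geq n_v \log|\alpha|_v / [K:\Q] \geq (\log p)/[K:\Q]$, so $\log p \leq [K:\Q]h(\alpha)$ and the claimed bound on $m$ follows.

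For the counting statement, any $\beta' \in S_{\alpha,n}$ with $[K(\beta'):\Q] \leq d$ satisfies $[K_v(\beta'):K_v] \leq d$, so by the parametrization the corresponding root of unity $\zeta$ obeys $[K_v(\zeta):K_v] \leq d$; Corollary \ref{galcor} translates this into an upper bound of size $O(dD^m)$ on the order of $\zeta$, and a crude count of elements of $\mu_{D^n}$ of such bounded order (via $\sum_{l \leq L}\varphi(l) = O(L^2)$ over admissible divisors of $D^n$) produces the bound $d^2 D^{2m}$. I expect the main bookkeeping obstacle will be controlling the various factors of $[K_v:\Q_p]$ that arise when passing between $K_v$-cyclotomic and $\Q_p$-cyclotomic extensions, and absorbing them cleanly into the exponent $m$ without inflating the dependence on $h(\alpha)$.
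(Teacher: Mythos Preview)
Your approach is essentially the same as the paper's: parametrize $S_{\alpha,n}$ by $\mu_{D^n}$ via the $p$-adic B\"ottcher map, then invoke Corollary \ref{galcor}. Two small points deserve attention.

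First, you claim $K_v(\beta_\zeta)=K_v(\zeta)$ by applying the rationality clause to $\Phi_v^{-1}$ as well. The paper's statement of the Ingram--De Marco theorem gives the rationality property only for $\Phi_v$, not for its inverse. Fortunately only the inclusion $K_v(\zeta)\subseteq K_v(\beta_\zeta)$ is needed (since $\Phi_v(\alpha)\in K_v$ and $\Phi_v(\beta_\zeta)\in K_v(\beta_\zeta)$), and this is exactly what the paper uses. Relatedly, your construction of $\beta_\zeta=\Phi_v^{-1}(\zeta w)$ presupposes that $\zeta w$ lies in the image of $\Phi_v$; the paper sidesteps this by starting from $\beta\in S_{\alpha,n}$, observing $|\beta|_v=|\alpha|_v>\delta_v$ so $\beta$ is in the domain, and reading off $\Phi_v(\beta)/\Phi_v(\alpha)\in\mu_{D^n}$.

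Second, the bookkeeping concern you flag is real as your argument stands. From $[K(\beta'):\Q]\le d$ you pass to $[K_v(\beta'):K_v]\le d$ and hence $[K_v(\zeta):K_v]\le d$; but Corollary \ref{galcor} is stated over $\Q_p$, so converting introduces a factor $[K_v:\Q_p]$ and yields only $b\le d[K:\Q]D^m$, giving $d^2[K:\Q]^2D^{2m}$ rather than $d^2D^{2m}$. The paper avoids this by bounding the degree over $\Q_p$ directly: since local degrees are at most global degrees, $[K_v(\beta'):\Q_p]\le [K(\beta'):\Q]\le d$, and as $\zeta\in K_v(\beta')$ this gives $[\Q_p(\zeta):\Q_p]\le d$ immediately, whence $b\le dD^m$. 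With this adjustment your argument goes through cleanly.
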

\begin{proof} 
For $z \in D(\infty, \delta_v)$ we have $|P^{\circ n}(z)|_v = |z|_v^{D^n}$. From this we can deduce that $|\beta|_v = |\alpha|_v$ for $\beta \in S_{\alpha,n}$. Moreover, the  pre-images of a critical point of $P$ lie outside of $D(\infty, \delta_v)$ for all places $v$. Thus $P^{\circ n}(X)-P^{\circ n}(\alpha)$ 
has non-vanishing discriminant and so $D^n$ distinct solutions. For each $\beta \in S_{\alpha,n}$ we have $\Phi_v(\beta) = \Phi_v(\alpha)\zeta$ for a $D^n$-th root $\zeta$ of 1. Since $\Phi_v$ is injective the function $z\mapsto \Phi_v(z)/\Phi_v(\alpha)$ induces a bijection between $S_{\alpha,n}$ and the $D^n$-th roots of unity.\\
 Now fix an integer $b$ dividing $D^n$ and let $\beta \in S_{\alpha,n}$ be such that $\Phi_v(\beta)/\Phi_v(\alpha)$ is a primitive $b$-th root $\zeta_b$ of $1$. First by the properties of $\Phi_v$, 
\[
K_v(\Phi_v(\beta),\Phi_v(\alpha))\subseteq K_v(\beta).
\]
 So by our choice of $\beta$ we have $\zeta_{b}\in K_v(\beta)$ and so by Corollary \ref{galcor} we have
\begin{align}
[K_v(\beta):\Q_v] \geq bD^{-m}, \label{ineq1}
\end{align}
for some $m \le \frac{D-1}{\log 2}\log p$. By our hypothesis on $\alpha$, we have 
\[
|\alpha|_v \ge p^{\frac{1}{[\Q_p(\alpha):\Q_p]}}.
\]
So
\[
\log p \le [K:\Q]h(\alpha).
\]
Choosing $b = D^n$ gives the first part of the statement as 
\[ 
[K_v(\beta):\Q_v]\le [K(\beta):\Q] \leq [\Q(\beta):\Q][K:\Q].
\]
 For the second part we note that it follows from (\ref{ineq1}) and the fact that $\Phi_v(z)/\Phi_v(\alpha)$ is a bijection between $S_{\alpha,n}$ and the $D^n$-th roots of unity that 
 \[
 \# \{\beta' \in S_{\alpha,n}: [K(\beta'):\Q] \leq d\}
 \]
 \[
 \leq \#\{\zeta \in \overline{\Q}:  \zeta  \text{ is a primitive }b\text{-th root of }1\text{ for } b \leq dD^m\}.
 \]

Since the number of primitive b-th roots of unity is bounded  by $b$ we obtain the second part of the statement. 
\end{proof}
From this point on we also allow $v$ to be archimedean. For the archimedean places $v$ we set $\delta_v$ to be the $R$ from Theorem \ref{lowerbounddynamics}. In order to prove Theorem \ref{general}, we first observe that the set of $\alpha$ such that neither Theorem \ref{lowerbounddynamics} nor Theorem \ref{Galoispadic} apply is a set of bounded height. In the following result $K$ is, as before, a number field over which $P$ is defined. However this time we do not require $\alpha$ to lie in $K$. For any number field $\mathcal{K}$, we denote by $M_\mathcal{K}$ the set of places on $\mathcal{K}$ normalized in the usual way. 
\begin{lemma}\label{boundedheight} The set 
\[
\{ \alpha \in \overline{\Q} : |\alpha|_v\le \delta_v \text{ for all }v \in M_{K(\alpha)}\}
\]
is a set of bounded height, with the bound depending only on $P$.
\end{lemma}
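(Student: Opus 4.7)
The plan is to use the standard place-by-place formula for the height. Writing $\mathcal{K}=K(\alpha)$, we have
\[
h(\alpha) = \frac{1}{[\mathcal{K}:\Q]} \sum_{v \in M_\mathcal{K}} n_v \log\max\{1, |\alpha|_v\} \le \frac{1}{[\mathcal{K}:\Q]} \sum_{v \in M_\mathcal{K}} n_v \log\max\{1, \delta_v\}
\]
by the hypothesis, and the task reduces to bounding the right-hand side by a quantity depending only on $P$. I would split $M_\mathcal{K}$ into the archimedean places, the finite places $v\mid p$ with $p\nmid D$, and the finite places $v\mid p$ with $p\mid D$, and handle each block separately.

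At the archimedean places, $\delta_v=R$ depends only on $P$ and $\sum_{v\mid\infty} n_v=[\mathcal{K}:\Q]$, so this block contributes at most $\log\max\{1,R\}$. For the common factor $\max_i\{1,|a_i|_v\}$ at the finite places, I would use $\log\max_i\{1,|a_i|_v\}\le \sum_{i=1}^{D}\log\max\{1,|a_i|_v\}$ together with the identity $\sum_{v} n_v\log\max\{1,|a_i|_v\}=[\mathcal{K}:\Q]\,h(a_i)$ to bound its contribution by $\sum_{i=1}^{D} h(a_i)$, which again depends only on $P$. It remains to absorb the extra factor $p^{1/(p-1)}/|D|_v$ at $v\mid p$ with $p\mid D$: the first piece contributes $\sum_{p\mid D}\frac{\log p}{p-1}$ after summing $n_v$ over $v\mid p$ (a constant determined by $D=\deg P$), while the product formula applied to the rational integer $D$ gives $\sum_{v\nmid\infty} n_v\log|D|_v=-[\mathcal{K}:\Q]\log D$, so $-\sum_{v\mid p,\, p\mid D} n_v\log|D|_v=[\mathcal{K}:\Q]\log D$, contributing only $\log D$ to $h(\alpha)$.

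Summing the contributions produces an explicit upper bound on $h(\alpha)$ in terms of $R$, $h(a_1),\dots,h(a_D)$ and the primes dividing $D$, each of which is determined by $P$. The argument is essentially bookkeeping; the only substantive input is the product formula on the integer $D$, which cleanly converts the $|D|_v$ factors into $\log D$. I do not anticipate any real obstacle, provided the three classes of places are separated correctly so that every individual contribution is bounded uniformly in $[\mathcal{K}:\Q]$.
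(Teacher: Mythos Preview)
Your argument is correct: you bound each local contribution to $h(\alpha)$ and show that the total is controlled by data depending only on $P$. The paper's proof reaches the same conclusion by a slightly more streamlined route. Rather than splitting into three classes of places and computing each contribution explicitly, the paper observes at once that the set $S=\{v\in M_K:\delta_v>1\}$ is finite (since the $a_i$ are $v$-integral for all but finitely many $v$, and only finitely many primes divide $D$), and that for $w\in M_{K(\alpha)}$ lying over $v\in M_K$ one has $\delta_w=\delta_v$. Then
\[
H(\alpha)^{[K(\alpha):K]}=\prod_{v\in S}\prod_{w\mid v}\max\{1,|\alpha|_w^{n_w}\}\le\prod_{v\in S}\prod_{w\mid v}\delta_v^{n_w},
\]
and using $\sum_{w\mid v}n_w\le[K(\alpha):K]$ gives $H(\alpha)\le\prod_{v\in S}\delta_v$ directly. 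Your version has the advantage of producing an explicit bound in terms of $R$, the heights $h(a_i)$, and $D$; the paper's version trades this explicitness for brevity, but both are essentially the same bookkeeping with the height formula.
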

\begin{proof}
Let $S=\{ v \in M_K: \delta_v>1\}$. Then 
\begin{eqnarray*}
H(\alpha)^{[K(\alpha):K]}& =&\prod_{w\in M_{K(\alpha)}} \max \{ 1, |\alpha|_w^{n_w}\}\\
&=&\prod_{v\in S}\prod_{w\in M_{K(\alpha)},w|v} \max\{ 1, |\alpha|_w^{n_w}\}\\
&\le&\prod_{v\in S}\prod_{w\in M_{K(\alpha)}, w|v} \delta_w^{n_w}
\end{eqnarray*}
where $n_w=[K(\alpha)_w:K_w]$. For each $v\in M_K$ we have
\[
\sum_{w\in M_{K(\alpha)},w|v}n_w \le [K(\alpha):K]
\]
and so
\[
H(\alpha) \le \prod_{v\in S} \delta_v
\]
and the result follows. \end{proof}

\begin{proof}[\textit{Proof of Theorem \ref{general}}]
By Lemma \ref{boundedheight} the set of $\alpha \in \overline{\Q}$ such that $|\alpha|_v\leq \delta_v$ for all places $v$ has bounded height, say by a constant $c_1$ (depending only on $P$). Recall  that $|\hat{h}_P-h|\leq c_2$ where $c_2$ only depends on $P$. If $h(\alpha) > c_1$ and so $|\alpha|_v > \delta_v$ for some $v$, we can apply Theorem \ref{lowerbounddynamics} or \ref{Galoispadic} according to whether or not $v$ is archimedean.  If $h(\alpha) \leq c_1$ and $\hat{h}_P(\alpha)>0$ then we pick the minimal positive integer $k$ such that $\hat{h}_P(P^{\circ k}(\alpha))=D^k \hat{h}_P(\alpha) >  c_1 +c_2$.  
Since $P^{\circ n}(P^{\circ k}(\alpha))= P^{\circ n}(P^{\circ k}(\beta))$ for $\beta\in S_{\alpha,n}$ we have that
$P^{\circ k}(S_{\alpha,n})\subseteq S_{P^{\circ k}(\alpha),n}$. For positive integers $m,m',l$ and $l'$ we have that $S_{P^{\circ m}(\alpha),l}\cap S_{P^{\circ m'}(\alpha),l'}$ is nonempty if and only if $m=m'$. It follows quickly that $\# S_{\alpha,n}\ge D^{n-1}$.   As $P^{\circ k}$ is generically a $D^k$ to 1 map  we have $\#P^{\circ k}(S_{\alpha,n}) \geq D^{n-k-1}$. We set $d$ to be minimal such that $[K(\beta):\Q]\leq d$ for all $\beta \in S_{\alpha,n}$. If $|P^{k}(\alpha)|_v > \delta_v$ for some non-archimedean $v$ we apply Theorem \ref{Galoispadic} and obtain
\begin{align}\label{1}
D^{n-k-1} \leq d^2D^{2m}
\end{align}
for some $m \leq \frac{(D-1)[K:\Q]}{\log 2}(c_1+2c_2)D$. If there is no such non-archimedean $v$ then $|P^{\circ k}(\alpha)|_v > \delta_v$ for an archimedean $v$ and we can apply Theorem \ref{proportiondynamics} to find that \begin{align}\label{2}
D^{n-k-1} \leq cd^{4+\epsilon}(1+h(\alpha))^{4+\epsilon}
\end{align}
for every $\epsilon >0$, with some constant $c$ depending only on $\epsilon$ and $P$. Combining (\ref{1}) and (\ref{2}) we conclude that we can find $\beta \in S_{\alpha,n}$ such that 
\begin{align*}
[K(\beta):\Q] \geq c\frac{D^{n/4 -\epsilon n}\min\{1,\hat{h}(\alpha)\}}{(1+h(\alpha))^{4+\epsilon}}
\end{align*}
for all $n\geq c'[K:\Q]h(\alpha)$, where $c$ and $c'$ are constants with $c$ depending on $P$ and $\varepsilon$ and $c'$ depending only on $P$. 
\end{proof}
Before we prove Theorem \ref{irreducible} we need a preparatory lemma. 
\begin{lemma}\label{powerlemma} For each $\theta \geq 2$ there exists a constant $c_{\theta}$ with the following property. Let $c\geq 1$ and let $d_1,\dots, d_{M}$ be positive integers such that $\sum_{i=1}^Md_i = X$ and, for all $R > 0$,
\begin{align}
\sum_{ \{i : d_i \leq R\}}d_i \leq cR^{\theta}. \label{condition}
\end{align}
 Then $M \leq c_{\theta} cX^{1- 1/\theta}$.   
\end{lemma}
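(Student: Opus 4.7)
The plan is a dyadic decomposition of $\{d_1,\dots,d_M\}$ by size, using condition~(\ref{condition}) in different regimes depending on the scale.

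Write $A_0=\#\{i:d_i=1\}$ and, for $k\ge 1$,
\[
A_k=\#\{i:2^{k-1}<d_i\le 2^k\},
\]
so that $M=\sum_{k\ge 0}A_k$. Two bounds on $A_k$ are available. Applying (\ref{condition}) at $R=2^k$, and noting that every $d_i$ counted in $A_k$ exceeds $2^{k-1}$, gives
\[
A_k\cdot 2^{k-1}\;\le\;\sum_{d_i\le 2^k}d_i\;\le\;c\,2^{k\theta},
\]
hence $A_k\le 2c\,2^{k(\theta-1)}$; second, since $\sum_i d_i=X$ one has the trivial bound $A_k\le X/2^{k-1}$. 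The condition at $R=1$ also yields $A_0\le c$.

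The first bound is favourable for small $k$ and the second for large $k$, the two crossing at the value $K^*$ determined by $2^{K^*}=(X/c)^{1/\theta}$. Using the first bound for $k\le K^*$ and the second for $k>K^*$, each partial sum is a geometric series dominated (since $\theta\ge 2$) by its extremal term, and both evaluate, up to a constant depending only on $\theta$, to $c^{1/\theta}X^{(\theta-1)/\theta}$. Adding $A_0\le c$ (absorbed into the same bound since $c\le c^{1/\theta}X^{1-1/\theta}$ whenever $X\ge c^\theta$) gives
\[
M\;\le\;c_\theta\,c^{1/\theta}\,X^{1-1/\theta}.
\]
Because $c\ge 1$ forces $c^{1/\theta}\le c$, this implies the stated bound. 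The edge case $X<c^\theta$, where $K^*<0$ and the dyadic split degenerates, is disposed of separately: there $X^{1/\theta}<c$, and $d_i\ge 1$ gives $M\le X\le cX^{1-1/\theta}$ directly.

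The genuinely substantive point (rather than an obstacle) is recognising that hypothesis~(\ref{condition}) bounds the \emph{sum} rather than just the count, so that at scale $2^k$ one obtains $A_k\le 2c\,2^{k(\theta-1)}$; the naive split $M\le\#\{d_i\le R\}+\#\{d_i>R\}\le cR^\theta+X/R$ only reaches the weaker exponent $\theta/(\theta+1)$. The rest is bookkeeping: choosing $K^*$ so that the two partial sums balance, controlling the geometric-series constants (which depend only on $\theta$), and using $c\ge 1$ at the end to convert $c^{1/\theta}$ into $c$.
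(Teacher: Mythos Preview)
Your proof is correct and takes a genuinely different route from the paper's. The paper argues variationally: for fixed $M$ it identifies the configuration of $d_i$'s that minimises $X$ subject to (\ref{condition}) (essentially taking as many copies of each integer $k$ as the constraint permits, so that $a_k\asymp_\theta ck^{\theta-2}$), and then reads off $X_0\gg_\theta cm^\theta$ and $M\ll_\theta cm^{\theta-1}$ from this extremal sequence to obtain the bound. You instead decompose dyadically by size and balance two estimates on each block $A_k$, one from (\ref{condition}) and one from $\sum d_i=X$, splitting at the scale $2^{K^*}=(X/c)^{1/\theta}$ where they cross.

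Your approach is more direct and avoids the somewhat delicate minimisation step. It also makes the sharper dependence $c^{1/\theta}$ (in place of $c$) transparent; you observe this yourself before relaxing it via $c\ge 1$. Indeed, since the geometric ratios $2^{\theta-1}$ and $1/2$ are bounded away from $1$ uniformly in $\theta\ge 2$, your constant $c_\theta$ can in fact be taken absolute. The paper's extremal argument, on the other hand, exhibits the essentially worst-case configuration explicitly, which is informative in its own right.
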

\begin{proof} Fix $M \geq 1$. We are going to minimize $X$ while preserving the condition on the sub-sums. We define a sequence of integers $a_j, j=1, \dots $ as follows. Let $a_1 = \lfloor c\rfloor  $ and for $k \geq 2 $ let $a_k$ be maximal subject to the restriction  $\sum_{i = 1}^{k} ia_i \leq ck^{\theta}$. With our fixed $M$, let $m$ be maximal such that  $ \sum_{i = 1}^{m
}a_i  \leq M$ and let $n = M- \sum_{i = 1}^{m
}a_i$. We claim that $X_0 =  \sum_{i = 1}^{m}ia_i +n(m+1)$ is the minimal value for $X$ if $M$ is fixed.

Suppose there is $d_1, \dots, d_M$ subject to (\ref{condition}) such that  $X_{min} = \sum_{i=1}^{M}d_i$ is minimal and smaller than $X_0$. Clearly we can assume that the $d_i$ are non-decreasing.

We define  $b_k = \#\{i:d_i = k\}$ and note that there exists $K\leq m$ such that $b_K < a_K$. Otherwise we would have $X_{min} \geq X_0$ and this would contradict our assumtion that $X_{min}$ is smaller than $X_0$. We now suppose that  $K$ is minimal with the property $b_K< a_K$.  We can replace $d_{I}$ by $d_{I}-1$ for $I = \sum_{k=1}^Kb_k$ without  violating (\ref{condition}) thus contradicting  that $ X_{min} = \sum_{i=1}^{M}d_i$ is minimal.

So we have shown that $X_0$ is the minimum value and will now estimate it from below. Since $ck^{\theta -2} \ll_\theta a_k \ll_\theta ck^{\theta -2}$ we find that $M \ll_\theta m^{\theta-1}$ and $X_0 \gg_\theta cm^{\theta}$. So $X_0 \gg_\theta cM^{\theta/(\theta-1)}$ and we are done since for any $X$ we have $c_\theta cX_0^{1 - 1/\theta}\leq c_\theta cX^{1- 1/\theta}$. 
\end{proof}
\begin{proof}[\textit{Proof of Theorem \ref{irreducible}}] 

As in the proof of Theorem \ref{general} we choose $k$ minimal such that $|P^{\circ k}(\alpha)|_v > \delta_v$ for some place $v$. Then $D^k \ll 1/\hat{h}_P(\alpha)$. First suppose that $v$ is archimedean. With similar arguments as above we deduce from Theorem \ref{proportiondynamics} that
\[
\{\beta \in S_{\alpha,n}: [K(\beta):\Q] \leq d\} \leq c \frac{(1+h(\alpha))^{4+\epsilon} }{\min\{1,\hat{h}_P(\alpha)\}}d^{4+\epsilon}
\]
and so
\begin{align}\label{est1}
\{\beta \in S_{\alpha,n}: [K(\beta):K] \leq d\} \leq c \frac{[K:\Q]^5(1+h(\alpha))^{4+\epsilon} }{\min\{1,\hat{h}_P(\alpha)\}}d^{4+\epsilon}
\end{align}
with the constant $c$ depending only on $P$ and $\varepsilon$. 

We apply Lemma \ref{powerlemma} with the $X$ there taken to be $D^n$ and $M=r_{\alpha,n}$. We choose $d_1,\ldots,d_{r_{\alpha,n}}$to be the degrees of the irreducible factors of $P^{\circ n}(z) - P^{\circ n}(\alpha)$ over $K$ and set $\theta=4+\varepsilon$. By (\ref{est1}) condition \eqref{condition} is fulfilled, and so Lemma \ref{powerlemma} gives 
\begin{align*}
r_{\alpha,n} & \leq c_{\epsilon}\frac{[K:\Q]^5(1+h(\alpha))^{4+\epsilon} }{\min\{1,\hat{h}_P(\alpha)\}}D^{\frac34n + \epsilon n}.
\end{align*}
For $v$ non-archimedean we get from Theorem \ref{Galoispadic} and Lemma \ref{powerlemma} 
\begin{align*}
r_{\alpha,n}\leq c[K:\Q]^2D^{2m}D^{n/2}
\end{align*}
with $m\leq \frac{(D-1)[K:\Q]}{\log 2} h(\alpha)$. Combining these two inequalities yields the result. 
\end{proof}
\section{Further examples}
We conclude the paper by showing how Theorem \ref{decay} applies to various modular functions. For instance, let
\[
\lambda(\tau) = \frac{ \left( 2 \sum_{n=0}^\infty q^{\frac{1}{4}\left(2n+1\right)^2}\right)^4}{\left(1+2\sum_{n=1}^\infty q^{n^2}\right)^4}
\]
be the modular $\lambda$-function, where $q=\exp(\pi i\tau)$. See for instance chapter 7, section 7 of \cite{Chandrasekharan}. By (8.1) on page 117 of \cite{Chandrasekharan} $\lambda$ is bounded on the half plane $\Im\tau \ge 1$ and there is some $c>0$ such that 
\begin{equation}\label{lambdagrowth}
0<|\lambda (\tau)| < ce^{-\pi \Im\tau}
\end{equation}
for $\tau$ with sufficiently large imaginary part, and real part in $[-1,1]$ say. As in section \ref{dynamics_real} we construct a related function on the disk. In order to obtain a bounded function on the disk, we don't work with the whole upper-half plane, but the half-plane given by $\Im\tau>1$. The transformation $z\mapsto \frac{2i}{1-z}$ takes the unit disk to this half plane and so the function
\[
\lambda^*(z) = \lambda \left( \frac{2i}{1-z}\right)
\]
is bounded on the unit disk. Let $S$ be the union of the interval $(0,1)$ with the inverse image under the Moebius transformation above of the set $S'$ of $\tau$ with real part in $[-1,1]$ and imaginary part large enough so that \eqref{lambdagrowth} holds. Bounds on the number of algebraic points of bounded height and degree on the graph of the function $\lambda^*$ and on the graph of $\lambda$ restricted to $\Im\tau >1$ are clearly equivalent, with suitable changes in constants. And by \eqref{lambdagrowth} above the function $\lambda^*$ satisfies the hypotheses of our Theorem \ref{decay}. So we get a bound
\[
c'd^9(\log d)^2(\log H)^9
\]
for algebraic points of degree at most $d$ and height at most $H$ (with $d\geq 2$ and $H\geq e$) on the graph of $\lambda$ restricted to the set $S'$. Here $c'$ is absolute, and could in principle be computed. 

This is weak compared to Schneider's theorem \cite[Theorem 6.3, page 56]{Baker}, which implies that $\lambda(\tau)$ will be transcendental when $\tau$ is algebraic and not quadratic. But exactly the same argument applies to the derivatives $\lambda',\lambda''$ of $\lambda$, and leads to results which appear to be new, although they could perhaps also be obtained using Binyamini's result in \cite{Binyamini}. (For what is known about transcendence here see \cite{Diaz}).

Similarly, our result applies to the discriminant function
\[
\Delta(\tau) =(2\pi)^{12}q\prod_{n\ge 1} (1-q^n)^{24}
\]
where we now use $q=\exp(2\pi i\tau)$ and to other cusp forms (see for instance Theorem 8.1 on page 62 and Proposition 7.4 on page 59 of \cite{SilvermanAdv}). Indeed it applies to any modular form $\sum_{n \ge 0} c_n q^n$ with algebraic $c_0$, by applying the above method to $\sum_{n \ge 1} c_n q^n$.

\end{document}